\newsavebox\myboxA
\newsavebox\myboxB
\newlength\mylenA
\newcommand\blfootnote[1]{%
  \begingroup
  \renewcommand\thefootnote{}\footnote{#1}%
  \addtocounter{footnote}{-1}%
  \endgroup
}   
\newcommand*\xoverline[2][0.75]{%
    \sbox{\myboxA}{$\m@th#2$}%
    \setbox\myboxB\null% Phantom box
    \ht\myboxB=\ht\myboxA%
    \dp\myboxB=\dp\myboxA%
    \wd\myboxB=#1\wd\myboxA% Scale phantom
    \sbox\myboxB{$\m@th\overline{\copy\myboxB}$}%  Overlined phantom
    \setlength\mylenA{\the\wd\myboxA}%   calc width diff
    \addtolength\mylenA{-\the\wd\myboxB}%
    \ifdim\wd\myboxB<\wd\myboxA%
       \rlap{\hskip 0.5\mylenA\usebox\myboxB}{\usebox\myboxA}%
    \else
        \hskip -0.5\mylenA\rlap{\usebox\myboxA}{\hskip 0.5\mylenA\usebox\myboxB}%
    \fi}
\long\def\/*#1*/{}
\numberwithin{equation}{section}
\newcommand*{\Mtilde}[1]{\skew{5}{\tilde}{#1}}
\newcommand{\prob}[1]{\ensuremath{\mathbbm{P}\left(#1\right)}}
\newcommand{\expt}[1]{\ensuremath{\mathbbm{E}\left[#1\right]}}
\newcommand{\ind}[1]{\ensuremath{\mathbbm{1}_{\left\{#1\right\}}}}
\newcommand{\pto}{\ensuremath{\xrightarrow{\mathbbm{P}}}}
\newcommand{\dto}{\ensuremath{\xrightarrow{\mathcal{L}}}}
\newcommand{\surp}[1]{\ensuremath{\mathrm{SP}(#1)}}
\newcommand{\PR}{\ensuremath{\mathbbm{P}}}
\newcommand{\E}{\ensuremath{\mathbbm{E}}}
\newcommand{\R}{\ensuremath{\mathbbm{R}}}
\newcommand{\e}{\ensuremath{\mathrm{e}}}
\newcommand{\1}{\ensuremath{\mathbbm{1}}}
\newcommand{\OP}{\ensuremath{O_{\sss\PR}}}
\newcommand{\oP}{\ensuremath{o_{\sss\PR}}}
\newcommand{\CM}{\ensuremath{\mathrm{CM}_n(\boldsymbol{d})}}
\newcommand{\bld}[1]{\boldsymbol{#1}}
\newcommand{\shortarrow}{\ensuremath{{\sss \downarrow}}}
\newcommand{\rE}{\mathrm{E}}
\newcommand{\rD}{\mathrm{D}}
\newcommand{\dif}{\mathrm{d}}
\newcommand{\ord}{\mathrm{ord}}
\newtheorem{theorem}{Theorem}
\newtheorem{algo}{Algorithm}
\newtheorem{lemma}[theorem]{Lemma}
\newtheorem{proposition}[theorem]{Proposition}
\newtheorem{assumption}{Assumption}
\newtheorem{remark}{Remark}
\begin{document}

\def\sss{\scriptscriptstyle}

\title{Critical window for the configuration model: \\finite third moment degrees}
\author{Souvik Dhara*}
\author{Remco van der Hofstad*}
\author{Johan S.H. van Leeuwaarden*}
\author{Sanchayan Sen$^\dagger$}
\affil{*Department of Mathematics and Computer Science,

Eindhoven University of Technology\\

$^\dagger$  Department of Mathematics and Statistics, McGill University }
\renewcommand\Authands{, }
\date{\today}
\maketitle
\blfootnote{\emph{Correspondence to:} \href{mailto:s.dhara@tue.nl}{S. Dhara}.}
\blfootnote{\emph{Email adresses:}  \href{mailto:s.dhara@tue.nl}{s.dhara@tue.nl}, \href{mailto:r.w.v.d.hofstad@tue.nl}{r.w.v.d.hofstad@tue.nl}, \href{mailto:j.s.h.v.leeuwaarden@tue.nl}{j.s.h.v.leeuwaarden@tue.nl}, \href{mailto:sanchayan.sen1@gmail.com}{sanchayan.sen1@gmail.com}.}
\blfootnote{2010 \emph{Mathematics Subject Classification.} Primary: 60C05, 05C80.}
\blfootnote{\emph{Keywords and phrase}. Critical configuration model, finite third moment degree, Brownian excursions with parabolic drift, scaling window, multiplicative coalescent, universality.}
\begin{abstract}
   We investigate the component sizes of the critical configuration model, as well as the related problem of critical percolation  on a supercritical configuration model.
 We show that, at criticality, the finite third moment assumption on the asymptotic degree distribution is enough to guarantee that the sizes of the largest connected components are of the order $n^{2/3}$ and the re-scaled component sizes (ordered in a decreasing manner) converge to the ordered excursion lengths of an inhomogeneous Brownian Motion with a parabolic drift. We use percolation to study the evolution of these component sizes while passing through the critical window and  show that the vector of percolation cluster-sizes, considered as a process in the critical window, converge to the multiplicative coalescent process in the sense of finite dimensional distributions. This behavior was first observed for Erd\H{o}s-R\'enyi random graphs by Aldous (1997) and our results provide support for the empirical evidences that the nature of the phase transition for a wide array of random-graph models are universal in nature. Further, we show that the re-scaled component sizes and surplus edges converge jointly under a strong topology, at each fixed location of the scaling window.
\end{abstract}

%\tableofcontents

\section{Introduction}
\label{secintro}
   Random graphs are the main vehicles to study complex networks that go through  a radical change in their connectivity, often called the \emph{phase-transition}.  
   A large body of literature aims at understanding the properties of random graphs that experience this phase-transition in the sizes of the large connected components for various models. The behavior is well understood for the Erd\H{o}s-R\'enyi random graphs, thanks to a plethora of results \cite{A97,JLR00,NP10a,HJL10}. 
However, these graphs are often inadequate for modeling real-world networks \cite{FFF99,SHL16,CSN09,SHL16c} since the real-world network data often show a power-law behavior of the asymptotic degrees  whereas the degree distribution of the Erd\H{o}s-R\'enyi random graphs has exponentially decaying tails. 
Therefore, many alternative models have been proposed to capture this power-law tail behavior. 
An interesting fact, however, is that the behavior, in most of these models, is quite universal in the sense that there is a critical value where the graphs experience a phase-transition and the nature of this phase-transition is insensitive to the microscopic descriptions of the model \cite{DLV14,BHL10,NP10a,R12,Jo10,AP00,SHL16b}. \par

In this work, we focus on the \emph{configuration model}, the canonical model for generating a random multi-graph with a prescribed degree sequence. 
This model was introduced by Bollob\'as~\cite{B80}  to choose a uniform simple $d$-regular graph on $n$ vertices, when $dn$ is even. The idea was later generalized for general degree sequences $\boldsymbol{d}$ by Molloy and Reed~\cite{MR95} and others. 
We denote by $\mathrm{CM}_n(\boldsymbol{d})$ the  multi-graph generated by the configuration model on the vertex set $[n]=\{1,2\dots,n\}$ with the degree sequence $\boldsymbol{d}.$ 
The configuration model, conditioned on simplicity, yields a {\em uniform} simple graph with the same degree sequence.  
Various features related to the emergence of the giant component phenomenon for this model have been studied recently \cite{F07, J09, MR95, JL09, Jo10, R12}. We give a brief overview of the relevant literature in Section \ref{sec_literature_overview}.
Our aim is to obtain precise asymptotics  for the component sizes of  $\mathrm{CM}_n(\boldsymbol{d})$ in the \emph{critical window} of phase transition under the optimal assumptions on the degree sequence involving a finite third-moment condition. 
The re-scaled vector of component sizes (ordered in a decreasing manner) is shown to converge to the ordered excursion lengths of certain reflected inhomogeneous Brownian motions with a parabolic drift. 
This shows that the component sizes of $\mathrm{CM}_n(\boldsymbol{d})$ in the critical regime, for a large collection of possible $\boldsymbol{d}$, lies in the same universality class as the Erd\H{o}s-R\'enyi random graph~\cite{A97} and the inhomogeneous random graph~\cite{BHL10}.  
We use percolation on a super-critical configuration model to show  the joint convergence of the scaled vectors of component sizes at multiple locations of the percolation scaling window. We also obtain the asymptotic distribution of the number of surplus edges in each component and show that the sequence of vectors consisting of the re-scaled component sizes and surplus converges to a suitable limit under a strong topology as discussed in~\cite{BBW12}. These results give very strong evidence in favor of the structural similarity of the component sizes of $\mathrm{CM}_n(\boldsymbol{d})$ and Erd\H{o}s-R\'enyi random graphs at criticality. 
\subsubsection*{Our contribution} The main contribution of this paper is that we derive the strongest results in the literature under the finite third-moment assumption on the degrees. 
This finite third-moment assumption is also necessary for Erd\H{o}s-R\'enyi type scaling limits, since, amongst other reasons, the third moment appears in the scaling limit.
In a recent work \cite{DHLS16}, we consider the infinite third-moment case with power-law degrees and show that the scaling limit of the cluster sizes is quite different. 
Also, we prove the joint convergence of the component sizes and the surplus edges under a strong topology, which improves the previous known results involving the surplus edges~\cite{R12}. 
We also study percolation on the configuration model to gain insight about the evolution of the configuration model over the critical scaling window. 
This is achieved by studying a dynamic process that generates the percolated graphs with different values of the percolation parameter, a problem that is interesting in its own right. 

Before stating our main results, we need to introduce some notation and concepts. \nocite{BHL12}

\section{Definitions and notation} \label{sub_sec_notation} We will use the standard notation  $\xrightarrow{\mathbbm{P}}$, $\xrightarrow{\mathcal{L}}$ to denote convergence in probability and in distribution or law, respectively. 
We often use the Bachmann Landau notation $O(\cdot)$, $o(\cdot)$ for large $n$ asymptotics of real numbers.
The topology needed for the distributional convergence will always be specified unless it is  clear from the context.  A sequence of events $(\mathcal{E}_n)_{n\geq 1}$ is said to occur with high probability (whp) with respect to probability measures $(\mathbbm{P}_n)_{n\geq 1}$  if $\mathbbm{P}_n\big( \mathcal{E}_n \big) \to 1$. Denote $f_n = O_{\sss \mathbbm{P}}(g_n)$ if $ ( |f_n|/|g_n| )_{n \geq 1} $ is tight; $f_n =o_{\sss \mathbbm{P}}(g_n)$ if $(|f_n|/|g_n|)_{n\geq 1}$ converges in probability to zero; $f_n =\Theta_{\sss \mathbbm{P}}(g_n)$ if $f_n=O_{\sss \mathbbm{P}}(g_n)$ and $g_n=O_{\sss \mathbbm{P}}(f_n)$. 
For a triangular array of random variables $(f_{k,n})_{k,n\geq 1}$, we write phrases like $f_{k,n} = \OP(n^{\alpha})$ (respectively  $\oP(n^{\alpha})$), uniformly over $k\leq n^{\beta}$ to mean that $\sup_{k\leq n^{\alpha}}|f_{k,n}| = \OP(n^{\alpha})$ (respectively $\oP(n^{\alpha})$).
We also write $f_n=O_{\sss E}(a_n)$ (respectively $f_n=o_{\sss E}(a_n)$) to denote that $\sup_{n\geq 1}\expt{a_n^{-1}f_n}<\infty$ (respectively $\lim_{n\to\infty}\expt{a_n^{-1}f_n}=0$). Denote by 
\begin{equation}\ell^2_{\shortarrow}:= \big\{ \mathbf{x}= (x_1, x_2, x_3, ...): x_1 \geq x_2 \geq x_3 \geq ... \text{ and } \sum_{i=1}^{\infty} x_{i}^2 < \infty \big\},
\end{equation}the subspace of non-negative, non-increasing  sequences of real numbers with square norm metric $d(\mathbf{x}, \mathbf{y})=( \sum_{i=1}^{\infty} (x_i-y_i)^2 )^{1/2}$ and let $(\ell^2_{\shortarrow})^k$ denote the $k$-fold product space of $\ell^2_{\shortarrow}$. 
With $\ell^2_{\shortarrow} \times \mathbbm{N}^{\infty}$, we denote the product topology of $\ell^2_{\shortarrow}$ and $\mathbbm{N}^{\infty}$, where $\mathbbm{N}^{\infty}$ denotes the collection of sequences on~$\mathbbm{N}$, endowed with the product topology. Define also
\begin{equation}
\mathbb{U}_{\shortarrow}:= \big\{ ((x_i,y_i))_{i=1}^{\infty}\in  \ell^2_{\shortarrow} \times \mathbbm{N}^{\infty}: \sum_{i=1}^{\infty} x_iy_i < \infty \text{ and } y_i=0 \text{ whenever } x_i=0, \ \forall i   \big\}
\end{equation} with the metric \begin{equation} \label{defn_U_metric}d_{\mathbb{U}}((\mathbf{x}_1, \mathbf{y}_1), (\mathbf{x}_2, \mathbf{y}_2)):= \bigg( \sum_{i=1}^{\infty} (x_{1i}-x_{2i})^2 \bigg)^{1/2}+ \sum_{i=1}^{\infty} \big| x_{1i} y_{1i} - x_{2i}y_{2i}\big|.
\end{equation} Further, we introduce $\mathbb{U}^0_{\shortarrow} \subset \mathbb{U}_{\shortarrow}$ as \begin{equation}\mathbb{U}^0_{\shortarrow}:= \big\{((x_i,y_i))_{i=1}^{\infty}\in\mathbb{U}_{\shortarrow} : \text{ if } x_k = x_m, k \leq m,\text{ then }y_k \geq y_m\big\}.
\end{equation}
 We usually use the boldface notation $\mathbf{X}$ for a time-dependent stochastic process $( X(s))_{s \geq 0}$, unless stated otherwise, $\mathbb{C}[0,t]$  denotes the set of all continuous functions  from $[0,t]$ to $\mathbbm{R}$ equipped with the topology induced by sup-norm $||\cdot||_{t}$. 
Similarly, $\mathbbm{D}[0,t]$ (resp.~$\mathbbm{D}[0,\infty)$) denotes the set of all c\`adl\`ag functions from $[0,t]$ (resp.~$[0,\infty)$) to $\mathbbm{R}$ equipped with the Skorohod $J_1$ topology. 
$\mathbf{B}^{\lambda}_{\mu,\eta}$ denotes an inhomogeneous Brownian motion with a parabolic drift, given by
\begin{equation}\label{def:inhomogen:BM}
B^{\lambda}_{\mu,\eta}(s)=\frac{\sqrt{\eta}}{\mu} B(s) +\lambda s-\frac{\eta s^2}{2\mu^{3}}
\end{equation}
 where $\mathbf{B}= ( B(s) )_{s \geq 0}$ is a standard Brownian motion, and $\mu>0$, $\eta>0$ and $\lambda\in \R$ are constants. Define the reflected version of $\mathbf{B}^{\lambda}_{\mu,\eta}$ as
\begin{equation} \label{defn::reflected-BM}
W^{\lambda}(s) = B^{\lambda}_{\mu,\eta}(s) - \min_{0 \leq t \leq s} B^{\lambda}_{\mu,\eta}(t).
\end{equation}
For a function $f\in \mathbb{C}[0,\infty)$, an interval $\gamma=(l,r)$ is called an \emph{excursion above past minima} or simply an \emph{excursion} of $f$ if $f(l)=f(r)=\min_{u\leq r}f(u)$ and $f(x)>f(r)$ for all $l<x<r$.   $|\gamma|=r(\gamma)-l(\gamma)$ will denote the length of the excursion $\gamma$.
\par Also, define the counting process of marks $\mathbf{N}^\lambda= ( N^\lambda(s) )_{s \geq 0}$ to be a unit-jump process with intensity $\beta W^\lambda(s)$ at time $s$ conditional on $( W^\lambda(u) )_{u \leq s}$ so that
\begin{equation} \label{defn::counting-process}
N^\lambda(s) - \int\limits_{0}^{s} \beta W^\lambda(u)du
\end{equation} is a martingale (see \citep{A97}). 
For an excursion $\gamma$, let $N(\gamma)$ denote the number of marks in the interval $[l(\gamma),r(\gamma)]$.
\begin{remark} \label{defn_U_0_process}\normalfont By \cite[Lemma 25]{A97}, the excursion lengths  of $\mathbf{B}^{\lambda}_{\mu,\eta}$ can be rearranged in decreasing order of length and the ordered excursion lengths can be considered as a vector in $\ell^2_{\shortarrow}$, almost surely. Let $\boldsymbol{\gamma}^\lambda = ( | \gamma^\lambda_{j}|)_{j\geq1}$ be the ordered excursion lengths of $\mathbf{B}^{\lambda}_{\mu,\eta}$. Then,  $( | \gamma_j^\lambda | , N(\gamma_j^\lambda))_{ j \geq 1} $ can be ordered as an element of  $\mathbb{U}^0_{\shortarrow}$ almost surely by \cite[Theorem 3.1~(iii)]{BBW12}. We denote this element of $\mathbb{U}^0_{\shortarrow}$ by $\mathbf{Z}(\lambda)= ((Y_j^\lambda,N_j^\lambda))_{j\geq 1}$ obtained from $(| \gamma_j^\lambda \big| , N(\gamma_j^\lambda))_{ j \geq 1} $.
\end{remark}
Finally, we define a Markov process $\mathbb{X}:=(\mathbf{X}(s))_{-\infty<s<\infty}$ on $\mathbbm{D}((-\infty,\infty),\ell^2_{\shortarrow})$, called the \emph{multiplicative coalescent process}. 
Think of $\mathbf{X}(s)$ as a collection of masses of some particles (possibly infinite) in a system at time $s$. 
Thus the $i^{th}$ particle has mass $X_i(s)$ at time~$s$. 
The evolution of the system takes place according to the following rule at time $s$: At rate $X_i(s)X_j(s)$,  particles $i$ and $j$ merge into a new particle of mass $X_i(s)+X_j(s)$.
This process has been extensively studied in \cite{A97,AL98}. In particular, Aldous~\cite[Proposition 5]{A97} showed that this is a Feller process.

\section{Main results}
\label{sec_results}
  Consider $n$ vertices labeled by $[n]:=\{1,2,...,n\}$ and a sequence of degrees $\boldsymbol{d} = ( d_i )_{i \in [n]}$ such that $\ell_n = \sum_{i \in [n]}d_i$ is even. For convenience we suppress the dependence of the degree sequence on $n$ in the notation. The configuration model on $n$ vertices with degree sequence $\boldsymbol{d}$ is constructed as follows:
 \begin{itemize}
 \item[] Equip vertex $j$ with $d_{j}$ stubs, or \emph{half-edges}. Two half-edges create an edge once they are paired. Therefore, initially we have $\ell_n=\sum_{i \in [n]}d_i$ half-edges. 
 We pick any one half-edge and pair it with a uniformly chosen half-edge from the remaining unpaired half-edges and keep repeating the above procedure until we exhaust all the unpaired half-edges.
 \end{itemize}
  Note that the graph constructed by the above procedure may contain self-loops or multiple edges. It can be shown \cite[Proposition 7.15]{RGCN1} that, conditionally on $\mathrm{CM}_{n}(\boldsymbol{d})$ being simple, the law of such graphs is uniform over all possible simple graphs with degree sequence $\boldsymbol{d}$.
  \par In this section, we discuss the main results in this paper. As discussed in the introduction, our results are twofold and concern (i) general $\mathrm{CM}_n(\boldsymbol{d})$ at criticality, and (ii) critical percolation on a super-critical configuration model, both under a finite third moment assumption.
\subsection{Configuration model results}
 We consider a sequence of configuration models $(\mathrm{CM}_n(\boldsymbol{d}))_{n\geq 1}$ satisfying the following:
 \begin{assumption} \label{assumption1}
 \normalfont Let $D_n$ denote the degree of a vertex chosen uniformly at random independently of the graph. Then,
 \begin{enumerate}[(i)]
  \item \label{assumption1-1}(\emph{Weak convergence of $D_n$}) \begin{equation} \label{assumption_eq1}
  D_n \dto D
 \end{equation} for some random variable $D$ such that $\mathbbm{E}[D^3] < \infty $.
 \item \label{assumption1-2}(\emph{Uniform integrability of $D_n^3$})
  \begin{equation}
   \label{assumption_eq2}
    \expt{D_n^3}= \frac{1}{n}\sum_{i\in [n]}d_{i}^{3} \to \mathbb{E}\big[ D^{3}\big].
  \end{equation}
  \item \label{assumption1-3}(\emph{Critical window})
  \begin{equation}
   \nu_{n}:= \frac{\sum_{i\in [n]}d_i(d_i-1)}{\sum_{i\in [n]}d_i} =1+\lambda n^{-1/3}+o(n^{-1/3}),
  \end{equation} for some $\lambda\in \mathbbm{R}$.
  \item \label{assumption1-4}$\prob{D=1}>0$.
 \end{enumerate}
 \end{assumption}
  Suppose that $\mathscr{C}_{\sss(1)}$, $\mathscr{C}_{\sss(2)}$,... are the connected components of $\mathrm{CM}_{n}(\boldsymbol{d})$ in decreasing order of size. 
  In case of a tie, order the components according to the values of the minimal indices of vertices in those components. For a connected graph $G$, let $\surp{G}$:= $($number of edges in $G) - (|G| - 1)$ denote the number of surplus edges. Intuitively, this measures the deviation of $G$ from a tree-like structure. Let  $\sigma_{r}= \expt{D^r}$ and consider the reflected Brownian motion, the excursions, and the counting process $\mathbf{N}^\lambda$ as defined in Section \ref{sub_sec_notation} with parameters
 \begin{equation}\label{parameter}
 \mu:=\sigma_1, \quad \eta:= \sigma_{3} \mu - \sigma_{2}^{2},\quad \beta := 1/ \mu.
 \end{equation} Let $\boldsymbol{\gamma}^{\lambda}$ denote the vector of excursion lengths of the process $\mathbf{B}^\lambda_{\mu,\eta}$, arranged in non-increasing order. Our main results  are as follows:
 \begin{theorem}
 \label{thm_main} Fix any $\lambda\in \mathbbm{R}$. Under Assumption~\ref{assumption1},
  \begin{equation}
   n^{-2/3}\big( | \mathscr{C}_{\sss (j)} | \big)_{j\geq 1} \xrightarrow{\mathcal{L}} \boldsymbol{\gamma}^\lambda
  \end{equation}
  with respect to the $\ell^2_{\shortarrow}$ topology.
 \end{theorem}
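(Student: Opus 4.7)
The proof follows the classical exploration-process paradigm introduced by Aldous for the Erd\H{o}s-R\'enyi graph, adapted to the configuration model via a size-biased vertex exploration. The plan is to (i) encode the component sizes as lengths of excursions of an explicit walk $\mathbf{S}_n$, (ii) prove a functional central limit theorem $n^{-1/3}S_n(\lfloor n^{2/3}\cdot\rfloor)\dto\mathbf{B}^{\lambda}_{\mu,\eta}(\cdot)$, (iii) transfer this to convergence of the top finitely many excursion lengths, and (iv) upgrade to the $\ell^2_{\shortarrow}$ topology by controlling small components.

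\textbf{Steps (i)--(ii).} Order the vertices $v_1,\dots,v_n$ in a size-biased random order (so $\mathbbm{P}(v_1=i)=d_i/\ell_n$, etc.), perform breadth-first exploration in this order, and define $S_n(0)=0$, $S_n(l)=\sum_{i=1}^{l}(d_{v_i}-2)$. Then components of $\CM$ correspond to the excursions of $\mathbf{S}_n$ above its running minimum, with lengths counting vertices. Writing $\mathbf{S}_n$ as compensator plus martingale, the conditional mean admits the expansion
\[
\mathbbm{E}[d_{v_{l+1}}-2\mid\mathcal{F}_l]=(\nu_n-1)-\frac{l}{\mu n}\cdot\frac{\sigma_3\mu-\sigma_2^2}{\mu^2}+\text{lower order},
\]
where the first term contributes the linear drift $\lambda s$ after summing to $l=\lfloor n^{2/3}s\rfloor$ and rescaling by $n^{-1/3}$, and the second term (the depletion of high-degree vertices under size-biasing) contributes the parabolic drift $-\eta s^2/(2\mu^3)$. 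The quadratic variation converges to $(\eta/\mu^2)s$ by Assumption~\ref{assumption1}(\ref{assumption1-2}), which also yields the Lindeberg-type bound on maximal jumps needed for Aldous' tightness criterion. This gives Skorohod convergence of $n^{-1/3}S_n(\lfloor n^{2/3}\cdot\rfloor)$ to $\mathbf{B}^{\lambda}_{\mu,\eta}$ on compact intervals.

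\textbf{Step (iii).} For fixed $T$, the map sending a path to its $K$ longest excursion lengths on $[0,T]$ is almost surely continuous at paths whose local minima are attained transversally, which is a.s.\ true for $\mathbf{B}^{\lambda}_{\mu,\eta}$. Skorohod representation then gives joint convergence of the top $K$ excursion lengths of $n^{-2/3}\mathbf{S}_n$ on $[0,T]$ to those of $\mathbf{B}^{\lambda}_{\mu,\eta}$ on $[0,T]$. Because the parabolic drift forces $\mathbf{B}^{\lambda}_{\mu,\eta}\to-\infty$, every one of its longest excursions lies in some $[0,T]$ for $T$ large; letting $T\to\infty$ gives finite-dimensional convergence of $n^{-2/3}(|\mathscr{C}_{\sss(j)}|)_{j\ge 1}$ to $\boldsymbol{\gamma}^{\lambda}$.

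\textbf{Step (iv): main obstacle.} The finite-dimensional convergence is strengthened to $\ell^2_{\shortarrow}$ via the tail estimate
\[
\lim_{K\to\infty}\limsup_{n\to\infty}\mathbbm{E}\Big[n^{-4/3}\sum_{i>K}|\mathscr{C}_{\sss(i)}|^2\Big]=0.
\]
Using the identity $\sum_i|\mathscr{C}_{\sss(i)}|^2=\sum_{v\in[n]}|\mathscr{C}(v)|$, this reduces to the uniform bound $\mathbbm{E}|\mathscr{C}(V)|=O(n^{1/3})$ for a uniform vertex $V$. One establishes this by dominating the local neighbourhood of $V$ by a branching process whose offspring distribution is the size-biased degree shifted by $-1$, with mean $\nu_n-1=O(n^{-1/3})$ and bounded variance by Assumption~\ref{assumption1}(\ref{assumption1-2}); a truncation-plus-second-moment computation yields the required bound. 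Combined with finite-dimensional convergence, this gives tightness of the tails in $\ell^2_{\shortarrow}$ and hence convergence in this stronger topology.

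The hardest step is (iv). Both the martingale CLT and the excursion extraction are by now standard under the finite third-moment hypothesis, but obtaining the $\ell^2_{\shortarrow}$ tail bound with constants uniform in the critical window requires the third-moment assumption to be used sharply, since it controls the variance of the size-biased degree that drives the surviving branching process and hence the expected component size.
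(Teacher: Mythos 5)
Your steps (i)--(iii) match the paper's architecture (exploration walk, martingale FCLT for $n^{-1/3}S_n(\lfloor n^{2/3}\cdot\rfloor)$, extraction of excursions and the ``large components explored early'' reduction), though you omit the technical point that the exploration process $\mathbf{S}_n$ has an extra correction term $-2c_{(j)}$ for cycles, self-loops and multi-edges, and one must show this correction is negligible (the paper does this via Lemma~\ref{lem_back_edges} and Lemma~\ref{cor_c_k}, separating $\mathbf{S}_n$ from the idealized walk $\mathbf{s}_n$).

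The genuine gap is in step (iv). You claim the $\ell^2_{\shortarrow}$ tail estimate ``reduces to the uniform bound $\mathbbm{E}|\mathscr{C}(V)| = O(n^{1/3})$.'' It does not. Since $\sum_i|\mathscr{C}_{\sss(i)}|^2 = \sum_{v}|\mathscr{C}(v)| = n\,\mathbbm{E}[|\mathscr{C}(V)|\mid\text{graph}]$, the bound $\mathbbm{E}|\mathscr{C}(V)|=O(n^{1/3})$ only gives $\mathbbm{E}\big[n^{-4/3}\sum_i|\mathscr{C}_{\sss(i)}|^2\big]=O(1)$, i.e.\ tightness of the full $\ell^2$ norm, not smallness of the tail $\sum_{i>K}$. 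Without a tail bound, mass can escape into many small components while the norm stays bounded, which is exactly the obstruction to upgrading product-topology convergence to $\ell^2_{\shortarrow}$ convergence. Moreover, your proposed route to the expectation bound---a branching process domination with offspring mean $\nu_n-1=\lambda n^{-1/3}+o(n^{-1/3})$---fails outright when $\lambda>0$: the dominating branching process is then supercritical and has infinite expected total progeny, so no $O(n^{1/3})$ bound comes out. The paper sidesteps the sign issue by first exploring for time $Tn^{2/3}$ and noting (Lemma~\ref{lem-time-nu-rel}) that the residual graph is again a configuration model whose criticality parameter has dropped by $C_0Tn^{-1/3}$, which is negative for $T$ large, at which point Janson's subcritical bound (Lemma~\ref{lem:janson-lemma}) applies conditionally; but that device is used there only to prove the finite-dimensional convergence (Lemma~\ref{lem:large-com-explored-early}), not the $\ell^2_{\shortarrow}$ tightness.

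For the actual $\ell^2_{\shortarrow}$ upgrade the paper follows Aldous's size-biased point process argument (Lemma~8, Lemma~14, Proposition~15 of Aldous), with one additional twist: in the configuration model the components are discovered in size-biased order where the ``size'' is the total degree $D_i=\sum_{k\in\mathscr{C}_i}d_k$, not the component cardinality. The paper proves $\Xi_n\dto 2\Xi_\infty$ using $\sigma_2/\mu=2$ (a consequence of $\nu_n\to 1$), deduces $\ell^2_{\shortarrow}$ tightness of $(n^{-2/3}D_{\sss(i)})_{i\geq 1}$ from Aldous's Lemma~14, and then transfers it to $(n^{-2/3}|\mathscr{C}_{\sss(i)}|)_{i\geq 1}$ via $|\mathscr{C}_i|\leq D_i+1$. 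That machinery is what produces the vanishing tail, and it is not replaceable by a first-moment bound on $|\mathscr{C}(V)|$.
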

 Recall the definition of $\mathbf{Z}(\lambda)$ from Remark~\ref{defn_U_0_process}. Order the vector component sizes and surplus edges $ \big( n^{-2/3}\big| \mathscr{C}_{\sss (j)} \big|, \surp{\mathscr{C}_{\sss (j)}} \big)_{j\geq 1}$ as an element of $\mathbb{U}^0_{\shortarrow}$ and denote it by $\mathbf{Z}_n(\lambda)$.
 \begin{theorem} \label{thm_surplus}
  Fix any $\lambda\in \mathbbm{R}$. Under Assumption~\ref{assumption1},
  \begin{equation} \label{eqn_thm_surplus}
  \mathbf{Z}_n(\lambda) \xrightarrow{\mathcal{L}} \mathbf{Z}(\lambda)
  \end{equation} with respect to the $\mathbb{U}^0_{\shortarrow}$ topology.
 \end{theorem}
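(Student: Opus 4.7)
The plan is to enrich the exploration of $\mathrm{CM}_n(\boldsymbol{d})$ underlying Theorem~\ref{thm_main} so that it simultaneously tracks surplus edges, derive joint functional convergence of the walk and the surplus-counting process, and then upgrade the resulting product-topology convergence to $\mathbb{U}^0_{\shortarrow}$ convergence via a uniform tail bound on the weighted surplus sum.

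Concretely, I would use the standard breadth-first exploration in which half-edges are paired one at a time: a designated active half-edge is matched with a uniformly chosen unpaired one, and if the partner belongs to an already-explored vertex a surplus edge is recorded. Let $S_n(l)$ be the associated walk, whose excursions above past minima encode the components, and let $N_n(l)$ count the surplus edges produced in the first $l$ steps. The proof of Theorem~\ref{thm_main} supplies $n^{-1/3}S_n(\lfloor n^{2/3}\cdot\rfloor)\xrightarrow{\mathcal{L}}\mathbf{B}^\lambda_{\mu,\eta}$ in $\mathbbm{D}[0,T]$. Conditionally on the history, the probability of producing a surplus at step $l+1$ is $\hat S_n(l)/(\ell_n-2l-1)$ with $\hat S_n$ the reflected walk, so the compensator of $N_n(\lfloor n^{2/3}\cdot\rfloor)$ converges to $\int_0^{\cdot}\beta W^\lambda(u)\,du$, using $\ell_n\sim \mu n$ together with Theorem~\ref{thm_main}. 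Standard martingale arguments then yield the joint convergence $(n^{-1/3}S_n(\lfloor n^{2/3}\cdot\rfloor),N_n(\lfloor n^{2/3}\cdot\rfloor))\xrightarrow{\mathcal{L}}(\mathbf{B}^\lambda_{\mu,\eta},\mathbf{N}^\lambda)$, and continuous-mapping arguments applied to the excursion-length and excursion-mark functionals (continuous almost surely under the law of $\mathbf{B}^\lambda_{\mu,\eta}$) give joint convergence of $(n^{-2/3}|\mathscr{C}_{\sss(j)}|,\surp{\mathscr{C}_{\sss(j)}})_{j\leq K}$ to $(Y_j^\lambda,N_j^\lambda)_{j\leq K}$ for every fixed~$K$.

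To lift this finite-dimensional convergence to the stronger $\mathbb{U}^0_{\shortarrow}$ topology, I would invoke \cite[Theorem 3.1]{BBW12}: combined with Theorem~\ref{thm_main}, it suffices to prove the uniform tail bound
\begin{equation}
\lim_{K\to\infty}\limsup_{n\to\infty}\,\mathbbm{P}\bigg(\sum_{j>K} n^{-2/3}|\mathscr{C}_{\sss(j)}|\,\surp{\mathscr{C}_{\sss(j)}}>\varepsilon\bigg)=0,\qquad\varepsilon>0.
\end{equation}
Establishing this tail bound is the main obstacle. The naive estimate bounding $|\mathscr{C}_{\sss(j)}|$ by $|\mathscr{C}_{\sss(K+1)}|$ for $j>K$ and then summing all surpluses fails, because the total surplus in the critical window is typically of order $n^{1/3}$, and so the resulting bound does not vanish after scaling. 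Instead I would use the exploration-path identity
\begin{equation}
\sum_{j\geq 1}|\mathscr{C}_{\sss(j)}|\,\surp{\mathscr{C}_{\sss(j)}}=\sum_{l=1}^{\ell_n/2}L_n(l)\,\mathbf{1}\{\text{surplus at step }l\},
\end{equation}
where $L_n(l)$ is the length of the excursion of $S_n$ containing step~$l$. Truncating to excursions of rescaled length at most $\delta$ and passing to the limit via the joint functional convergence above, the contribution of small excursions is dominated by $\beta\int_0^T W^\lambda(s)L^\lambda(s)\mathbf{1}\{L^\lambda(s)\leq\delta\}\,ds$, which vanishes almost surely as $\delta\to 0$ since $\sum_{j:\,Y_j^\lambda\leq\delta}(Y_j^\lambda)^{5/2}\lesssim \delta^{1/2}\sum_j(Y_j^\lambda)^2<\infty$ almost surely by \cite[Lemma 25]{A97}. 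Combined with the marginal convergence of the top $K$ components, this yields the required tail bound and hence~\eqref{eqn_thm_surplus}.
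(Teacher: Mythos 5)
Your overall architecture — joint functional convergence of the exploration walk and the surplus-counting process, followed by a tail bound that lifts product-topology convergence to $\mathbb{U}^0_{\shortarrow}$ — matches the paper's two-lemma structure (Lemma~\ref{lem:surp:poisson-conv} and Lemma~\ref{sufficient-U0-conv-condn}), and the first half of your argument is essentially the paper's. The gap is in the tail bound, which you yourself identify as the main obstacle and where the two arguments genuinely diverge.

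The problem with your exploration-path argument is that the functional convergence $(n^{-1/3}S_n,\,N_n)\Rightarrow(\mathbf{B}^\lambda_{\mu,\eta},\mathbf{N}^\lambda)$ holds only on compact intervals $[0,T]$. When you rewrite $\sum_j |\mathscr{C}_{\sss(j)}|\,\surp{\mathscr{C}_{\sss(j)}}$ as $\sum_l L_n(l)\mathbf{1}\{\text{surplus at }l\}$ and truncate to small excursions, passing to the limit gives you control only over the portion of the sum coming from exploration steps $l\le Tn^{2/3}$; the integral $\beta\int_0^T W^\lambda L^\lambda\mathbf{1}\{L^\lambda\le\delta\}\,ds$ captures precisely that range and nothing more. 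The contribution from $l>Tn^{2/3}$ is left unaddressed, and it is not negligible in the relevant sense: after time $Tn^{2/3}$ the residual graph is a barely subcritical configuration model, and the total surplus accrued over the remainder of the exploration is unbounded in $n$ (in fact of order $\log n$, not $n^{1/3}$ as you write; the $n^{1/3}$ claim overestimates the total surplus by a large margin but does not change the conclusion that the naive bound fails). A crude bound of the form $n^{-2/3}|\mathscr{C}^{\sss\geq T}_{\max}|\cdot(\text{total surplus after }T)$ does not close the argument either, because the second factor diverges. What is needed is a quantitative estimate showing that surpluses occurring in small components are sufficiently rare — i.e.\ decay in both the component size and the surplus count — uniformly in $n$.

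This is exactly the role of the paper's Lemma~\ref{lem_surplus_delta_bound}: a direct martingale estimate (using the exploration from a uniformly random vertex and an optional-stopping bound on the walk, together with the subcritical bound of Lemma~\ref{lem:janson-lemma}) of the form
$\PR\big(\surp{\mathscr{C}(V_n)}\ge K,\,|\mathscr{C}(V_n)|\in(\delta_Kn^{2/3},2\delta_Kn^{2/3})\big)\le C\sqrt{\delta}/(n^{1/3}K^{1.1})$
for $\lambda<0$, where $\delta_K=\delta K^{-0.12}$. Feeding this into a Markov inequality for $n^{1/3}\E[\surp{\mathscr{C}(V_n)}\mathbf{1}\{|\mathscr{C}(V_n)|\le\varepsilon n^{2/3}\}]$ and summing over a dyadic decomposition in both size and surplus count yields the $O(\sqrt{\varepsilon})$ tail bound directly, with no appeal to functional convergence at all. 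For $\lambda>0$ the paper then reduces to the $\lambda<0$ case by running the exploration until time $Tn^{2/3}$ (after which $\bar\nu_n<1$ whp, by Lemma~\ref{lem-time-nu-rel}) and handling the pre-$T$ components with Cauchy--Schwarz, the $\ell^2_{\shortarrow}$ tightness from Theorem~\ref{thm_main}, and the $O_{\sss\PR}(1)$ surplus count on $[0,T]$ from Lemma~\ref{lem:surp:poisson-conv}. Your proposal is missing a replacement for this martingale estimate, and without one the tail bound does not follow from the ingredients you list.
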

In words, Theorem~\ref{thm_main} gives the precise asymptotic distribution of the component sizes re-scaled by $n^{2/3}$ and Theorem~\ref{thm_surplus} gives the asymptotic number of surplus edges in each component jointly with their sizes.

\begin{remark}\normalfont The strength of Theorems~\ref{thm_main} and~\ref{thm_surplus} lies in Assumption~\ref{assumption1}. Clearly, Assumption~\ref{assumption1} is satisfied when the distribution of $D$ satisfies an asymptotic power-law relation with finite third moment, i.e., $\PR(D\geq x)\sim x^{-(\tau-1)}(1+o(1))$ for some $\tau >4$. Also, if one has a random degree-sequence that satisfies Assumption~\ref{assumption1} with high probability, then Theorems~\ref{thm_main} and~\ref{thm_surplus} hold conditionally on the degrees. In particular, when the degree sequence consists of an i.i.d sample from a distribution with $\E[D^3]<\infty$ \cite{Jo10}, then Assumption~\ref{assumption1} is satisfied almost surely. We will later see that  degree sequences in the percolation scaling window also satisfy Assumption~\ref{assumption1}. 
\end{remark}

\subsection{Percolation results}
Bond percolation on a graph $G$ refers to deleting edges of $G$ independently with equal probability~$p$. 
In the case $G$ is a random graph, the deletion of edges are also independent of $G$.
 Consider bond percolation on $\mathrm{CM}_n(\boldsymbol{d})$ with probability $p_n$, yielding  $\mathrm{CM}_n(\boldsymbol{d}, p_n)$. We assume the following:
\begin{assumption} \label{assumption2} \normalfont
 \begin{enumerate}[(i)]
  \item \label{assumption2-1} Assumption~\ref{assumption1}~\ref{assumption1-1} and \ref{assumption1-2}  hold for the degree sequence and the  $\mathrm{CM}_n(\boldsymbol{d})$ is super-critical, i.e.
  \begin{equation}
   \nu_n=\frac{\sum_{i\in [n]}d_i(d_i-1)}{\sum_{i\in [n]}d_i}\to \nu= \frac{\expt{D(D-1)}}{\expt{D}} >1.
  \end{equation}
  \item \label{assumption2-2}(\emph{Critical window for percolation}) For some $\lambda\in \mathbbm{R}$,
  \begin{equation}
  p_{n}=p_n(\lambda):=\frac{1}{\nu_n} \bigg( 1+ \frac{\lambda}{n^{1/3} }\bigg).
  \end{equation}
  \end{enumerate}
 \end{assumption}
Note that $p_n(\lambda)$, as defined in Assumption~\ref{assumption2}~\ref{assumption2-2}, is always non-negative for $n$ sufficiently large. 
Now, suppose $\tilde{d}_i\sim \mathrm{Bin}(d_i,\sqrt{p_n})$, $n_+:=\sum_{i\in [n]}(d_i-\tilde{d}_i)$ and $\tilde{n}=n+n_+$.  Consider the degree sequence $\Mtilde{\boldsymbol{d}}$ consisting of $\tilde{d}_i$ for $i\in [n]$ and $n_+$ additional vertices of degree 1, i.e. $\tilde{d}_i=1$ for $i\in [\tilde{n}]\setminus [n]$. 
We will show later that the degree $\tilde{D}_n$ of a random vertex from this degree sequence satisfies Assumption~\ref{assumption1}~\ref{assumption1-1},~\ref{assumption1-2} almost surely for some random variable $\tilde{D}$ with $\mathbbm{E}[\tilde{D}^3]<\infty$.
Moreover, $\tilde{n}/n\to 1+\mu(1-\nu^{-1/2}) = \zeta$ almost surely. 
Now, using the notation in Section~\ref{sub_sec_notation}, define $\tilde{\gamma}_{j}^\lambda = \zeta^{2/3} \bar{\gamma}_j^\lambda$, where $\bar{\gamma}_j^\lambda$ is the $j^{th}$ largest excursion of the inhomogeneous Brownian motion $\mathbf{B}^\lambda_{\mu,\eta}$ with the parameters 
\begin{equation}\label{value-parameters}
\mu=\mathbbm{E}[\tilde{D}],\quad \eta=\mathbbm{E}[\tilde{D}^3]\mathbbm{E}[\tilde{D}]-\mathbbm{E}^2[\tilde{D}^2], \quad \beta=1/\mathbbm{E}[\tilde{D}].
\end{equation} Define the process $\tilde{\mathbf{N}}$ as in \eqref{defn::counting-process}   with the parameter values given by \eqref{value-parameters}. Denote  the $j^{th}$ largest cluster of $\mathrm{CM}_{n}(\boldsymbol{d},p_n(\lambda))$ by  $\mathscr{C}^{p}_{\sss (j)}(\lambda)$. 
Also, let $\mathbf{Z}_n^p(\lambda)$ denote the vector in $\mathbb{U}^0_{\shortarrow}$ obtained by rearranging critical percolation clusters (re-scaled by $n^{2/3}$) and their surplus edges and $\tilde{\mathbf{Z}}(\lambda)$ denote the vector in $\mathbb{U}^0_{\shortarrow}$ obtained by rearranging $( (\sqrt{\nu}| \tilde{\gamma}_j^\lambda | , \tilde{N}(\tilde{\gamma}^\lambda_j)))_{ j\geq 1} $.
    \begin{theorem} \label{thm_percolation}  Under Assumption \ref{assumption2},
    \begin{equation} \label{eqn_thm_percolation}
   \mathbf{Z}_n^p(\lambda) \xrightarrow{\mathcal{L}}  \tilde{\mathbf{Z}}(\lambda)
  \end{equation}  with respect to the $\mathbb{U}^0_{\shortarrow}$ topology.
 \end{theorem}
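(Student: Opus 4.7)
The plan is to reduce Theorem~\ref{thm_percolation} to Theorems~\ref{thm_main} and~\ref{thm_surplus} via the Janson--Fountoulakis explosion coupling, which realises the percolated graph as a subgraph of an enlarged, unpercolated configuration model. Independently mark each half-edge of $\mathrm{CM}_n(\boldsymbol d)$ with probability $\sqrt{p_n}$, keep the $\tilde d_i\sim\mathrm{Bin}(d_i,\sqrt{p_n})$ marked half-edges attached to vertex $i$, and re-attach each unmarked half-edge to a fresh degree-one vertex, producing the augmented degree sequence $\tilde{\boldsymbol d}$ on $[\tilde n]$ with $\ell_n$ half-edges in total. Uniform pairing of all half-edges produces $\mathrm{CM}_{\tilde n}(\tilde{\boldsymbol d})$, and its restriction to $[n]$ has law $\mathrm{CM}_n(\boldsymbol d,p_n)$. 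Since every auxiliary vertex has degree one, it appears only as a leaf of the fresh CM (or as one end of an isolated auxiliary--auxiliary edge); hence each component $\tilde{\mathscr C}_{(j)}$ of $\mathrm{CM}_{\tilde n}(\tilde{\boldsymbol d})$ meeting $[n]$ is obtained from a percolation cluster $\mathscr C^p_{(j)}$ by attaching auxiliary leaves, and the two components share the same surplus.

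Next, I check that $\tilde{\boldsymbol d}$ satisfies Assumption~\ref{assumption1} almost surely. The factorial-moment identity $\expt{\tilde d_i^{(r)}\mid d_i}=d_i^{(r)}p_n^{r/2}$ together with the LLN for independent summands yields $\tilde n^{-1}\sum_{i\in[\tilde n]}\tilde d_i^r\to\expt{\tilde D^r}$ almost surely for $r\in\{1,2,3\}$, with $\tilde D$ the mixture of $\mathrm{Bin}(D,\nu^{-1/2})$ and $\delta_1$ announced in the paper; this gives Assumption~\ref{assumption1}~\ref{assumption1-1}--\ref{assumption1-2}. For the critical window, $\sum_{i\in[\tilde n]}\tilde d_i=\ell_n$ and $\sum_{i\in[\tilde n]}\tilde d_i(\tilde d_i-1)=p_n\ell_n\nu_n+o(n^{2/3})$ a.s.\ by concentration of independent sums, so $\tilde\nu_n=p_n\nu_n+o(n^{-1/3})=1+\lambda n^{-1/3}+o(n^{-1/3})$; rewriting in $\tilde n^{-1/3}$-scale using $\tilde n=\zeta n(1+o(1))$ produces the critical-window condition for the fresh CM with effective parameter $\tilde\lambda=\lambda\zeta^{1/3}$. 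Finally, $\prob{\tilde D=1}\geq 1-1/\zeta>0$ from the auxiliary vertices.

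Applying Theorem~\ref{thm_surplus} to $\mathrm{CM}_{\tilde n}(\tilde{\boldsymbol d})$ conditionally on $\tilde{\boldsymbol d}$ then delivers joint $\mathbb U^0_\shortarrow$-convergence of the rescaled component-size and surplus vector of the fresh CM to the limit built from the inhomogeneous Brownian motion associated with $\tilde D$. A Brownian scaling (substituting $s\mapsto\zeta^{2/3}s$ in \eqref{def:inhomogen:BM}) converts this limit into the one written in terms of $\bar\gamma^\lambda_j$ with the parameters of \eqref{value-parameters}, and accounts for the factor $\zeta^{2/3}$ in $\tilde\gamma_j^\lambda=\zeta^{2/3}\bar\gamma^\lambda_j$. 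To pass from fresh CM components to percolation components, decompose $|\tilde{\mathscr C}_{(j)}|=|\mathscr C^p_{(j)}|+L_{(j)}$, where $L_{(j)}$ counts the auxiliary leaves attached to $\mathscr C^p_{(j)}$. Auxiliary half-edges form an asymptotic fraction $1-\nu^{-1/2}$ of $\ell_n$, and a concentration argument inside the exploration used to prove Theorem~\ref{thm_main} shows that $|\mathscr C^p_{(j)}|/|\tilde{\mathscr C}_{(j)}|$ converges in probability to an explicit deterministic constant producing the $\sqrt\nu$ prefactor in $\sqrt\nu|\tilde\gamma^\lambda_j|$. The surplus is unchanged by leaf-stripping, so $\surp{\mathscr C^p_{(j)}}=\surp{\tilde{\mathscr C}_{(j)}}$, and the joint $\mathbb U^0_\shortarrow$-convergence transfers to yield \eqref{eqn_thm_percolation}.

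The principal obstacle is the last step: $\mathbb U^0_\shortarrow$-convergence is strictly stronger than coordinate-wise convergence and demands simultaneous control of the joint fluctuations of the leaf-counts $(L_{(j)})_{j\ge 1}$ against the cluster sizes, uniformly across the top clusters. The natural route is to embed the leaf-stripping inside the depth-first exploration that drives the Brownian scaling limit of Theorem~\ref{thm_main}, so that the rescaled exploration walks for the fresh and percolated CMs differ by an almost-deterministic time-change; this will require extending the martingale estimates behind Theorem~\ref{thm_surplus} to the joint process of partial cluster-sizes and leaf-counts along the exploration.
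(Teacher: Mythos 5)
Your skeleton is the same as the paper's: Janson--Fountoulakis explosion to $\mathrm{CM}_{\tilde n}(\Mtilde{\boldsymbol d})$, almost-sure verification of Assumption~\ref{assumption1} for $\Mtilde{\boldsymbol d}$ (this is the paper's Lemma~\ref{lem_percolation_condition}, proved by McDiarmid-type concentration plus Borel--Cantelli), application of Theorem~\ref{thm_surplus} to the enlarged model, observing that stripping degree-one vertices preserves surplus, and then transferring to the percolation clusters. The place where you diverge — and where you explicitly flag an unresolved ``principal obstacle'' — is exactly the step the paper handles with two tools you have not used. First, the paper invokes Remark~\ref{rem:alt-s3}: rather than deleting the specific red vertices (your $L_{(j)}$), it deletes $n_+$ degree-one vertices chosen \emph{uniformly at random}, independently of the pairing. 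That replacement decouples the deletion from the exploration and from which half-edges were thinned, so the number deleted from $\tilde{\mathscr C}_{(j)}$ is simply a hypergeometric thinning of $v_1(\tilde{\mathscr C}_{(j)})$, the count of degree-one vertices in that component. Second, Section~\ref{sec_vertex} has already established (via the size-biased Lemma~\ref{lem_general} and Lemma~\ref{lem:large-com-explored-early}) that $v_k(\mathscr C_{(j)}) = (kr_k/\E[D])\lvert\mathscr C_{(j)}\rvert + O_{\sss\PR}(k^{-1}n^{1/3})$, and Lemma~\ref{vertices_of_degree_k_ord} gives $\ell^2_\shortarrow$-tightness of the ordered vectors $(n^{-2/3}v_k(\mathscr C_j))_j$ essentially for free, because $v_k(\mathscr C_{(j)})\le\lvert\mathscr C_{(j)}\rvert$ and the component sizes are already $\ell^2_\shortarrow$-tight by Theorem~\ref{thm_main}. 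Combining these gives \eqref{degree_one_vertices}: the deleted count is $(1-\sqrt{p_n})\lvert\tilde{\mathscr C}_{(j)}\rvert + o_{\sss\PR}(n^{2/3})$ with the error controlled \emph{in $\ell^2$} across components, which is exactly the ``simultaneous control uniformly across the top clusters'' that you correctly identify as necessary but do not supply.

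Your proposed alternative — embedding the leaf-count into the depth-first exploration and extending the martingale estimates of Theorem~\ref{thm_surplus} — would involve re-deriving a functional CLT for a bivariate process and is considerably heavier than what is actually needed; the paper never touches the exploration martingales for this step. Two smaller issues worth flagging: (i) your deletion of the \emph{red} vertices $L_{(j)}$ directly, rather than uniformly random degree-one vertices, couples the deletion to the half-edge thinning and the pairing, so a size-biased argument à la Lemma~\ref{lem_general} does not apply off the shelf; and (ii) the ``Brownian scaling $s\mapsto\zeta^{2/3}s$'' you invoke to absorb the effective location parameter $\tilde\lambda=\lambda\zeta^{1/3}$ into $\lambda$ does not map $\mathbf B^{\tilde\lambda}_{\mu,\eta}$ to a constant multiple of $\mathbf B^{\lambda}_{\mu,\eta}$ with the \emph{same} $(\mu,\eta)$ — a time/amplitude rescaling of \eqref{def:inhomogen:BM} forces $\beta=1$ unless the parameters $(\mu,\eta)$ are also changed — so that identity needs to be tracked more carefully before it can account for the stated factor $\zeta^{2/3}$.
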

 Next we consider the percolation clusters for multiple values of $\lambda$. 
 There is a very natural way to couple $(\mathrm{CM}_n(\boldsymbol{d},p_n(\lambda))_{\lambda\in \R}$ described as follows:
Suppose that each edge $(ij)$ of $\mathrm{CM}_n(\boldsymbol{d})$ has an associated i.i.d uniform random variable $U_{ij}$, and the $U_{ij}$'s are also independent of $\mathrm{CM}_n(\boldsymbol{d})$. Now, delete edge $(ij)$ if $U_{ij}>p_{n}(\lambda)$. The obtained graph is distributed as $\mathrm{CM}_n(\boldsymbol{d},p_n(\lambda))$. 
Moreover, if we fix the set of uniform random variables and change $\lambda$, this produces a coupling between the graphs $(\mathrm{CM}_n(\boldsymbol{d},p_n(\lambda))_{\lambda\in \R}$.
 The next theorem shows that the convergence of the component sizes holds jointly in finitely many locations within the critical window, under the above described coupling:
 \begin{theorem} \label{thm_multiple_convergence}
Suppose that 
%$|\ell_n/n-\mu| = o(n^{-\gamma}),$ for some $1/3<\gamma<1/2$ and  
Assumption \ref{assumption2} holds. Let $\mathbf{C}_n(\lambda)=(n^{-2/3}|\mathscr{C}^p_{\sss (j)}(\lambda)| )_{j\geq 1}$. 
For any $k\geq 1$ and $-\infty<\lambda_0< \lambda_1<\dots<\lambda_{k-1}<\infty$,
 \begin{equation} \label{eqn_thm_multiple_convergence}
 \big( \mathbf{C}_n(\lambda_0), \mathbf{C}_n(\lambda_1), \dots, \mathbf{C}_n(\lambda_{k-1})\big)  \xrightarrow{\mathcal{L}}\sqrt{\nu} (\tilde{\boldsymbol{\gamma}}^{ \lambda_0},\tilde{\boldsymbol{\gamma}}^{ \lambda_1}, \dots, \tilde{\boldsymbol{\gamma}}^{\lambda_{k-1}})
 \end{equation}
with respect to the $(\ell^2_{\shortarrow})^k$ topology. 
 \end{theorem}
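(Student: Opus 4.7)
The plan is to combine the natural monotone edge-weight coupling with Aldous's multiplicative coalescent and its Feller property. Under the coupling, $\lambda \mapsto \mathbf{C}_n(\lambda)$ is a monotone increasing pure-jump Markov process on $\ell^2_{\shortarrow}$: at each $\lambda$, an as-yet-inactive edge of $\mathrm{CM}_n(\boldsymbol{d})$ is activated at rate $p_n'(\lambda)/(1-p_n(\lambda)) = 1/(\nu_n n^{1/3}(1-p_n(\lambda)))$, and each activation either merges two clusters of the percolated graph (the only events that change $\mathbf{C}_n$) or creates a surplus edge inside an existing cluster. Applying Theorem~\ref{thm_percolation} at each $\lambda_i$ supplies marginal convergence $\mathbf{C}_n(\lambda_i) \xrightarrow{\mathcal{L}} \sqrt{\nu}\tilde{\boldsymbol{\gamma}}^{\lambda_i}$ and hence tightness of the joint law in $(\ell^2_{\shortarrow})^k$, so I may restrict to convergent subsequences; the limit $(\mathbf{X}(\lambda_0), \ldots, \mathbf{X}(\lambda_{k-1}))$ then has the prescribed marginals and the task is to identify the joint law.

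The heart of the proof is the identification of the coalescent rate in the scaling limit. Combining the half-edge pairing description of the configuration model with the dynamic construction of the coupled family, the number of edges of $\mathrm{CM}_n(\boldsymbol{d})$ between two distinct clusters with vertex sets $A, B$ at stage $\lambda$ is concentrated around $D_A D_B / \ell_n$, where $D_A = \sum_{v \in A} d_v$ is the cluster total degree; multiplying by the activation rate above, the instantaneous merger rate of the two clusters per unit $\lambda$ is asymptotically proportional to $D_A D_B / n^{4/3}$. I would then prove a uniform-over-macroscopic-clusters degree asymptotic $D_A(\lambda) / |A(\lambda)| \to c_*$ in probability, with $c_*$ an explicit constant computable from the thinned-degree construction $\tilde{\boldsymbol{d}}$ used in the proof of Theorem~\ref{thm_percolation}. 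After rescaling cluster sizes by $n^{-2/3}$, this turns the merger rate into the rate of a multiplicative coalescent up to a deterministic time change consistent with the $\sqrt{\nu}$-scaling of Theorem~\ref{thm_percolation}.

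Having identified the limiting dynamics as those of a multiplicative coalescent, joint convergence follows from the Feller property established in \cite[Proposition~5]{A97}: the transition semigroup of the multiplicative coalescent is continuous in the initial state, so convergence of the initial law at $\lambda_0$ together with convergence of the dynamics propagates to joint convergence at $(\lambda_0, \ldots, \lambda_{k-1})$, and the marginals of the joint limit are forced to coincide with $\sqrt{\nu}\tilde{\boldsymbol{\gamma}}^{\lambda_i}$ by Theorem~\ref{thm_percolation}.

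The main obstacle I anticipate is the uniform cluster-degree estimate $D_A(\lambda) \sim c_* |A(\lambda)|$, required simultaneously over all macroscopic components and uniformly over $\lambda \in [\lambda_0, \lambda_{k-1}]$; because $\ell^2_{\shortarrow}$-convergence is sensitive to the tail of the cluster-size sequence, this must be quantitative in the tail. A secondary technical point is to show that surplus-edge activations inside the window contribute only an $O_{\sss\PR}(1)$ perturbation to the total number of events and therefore do not disturb the coalescent dynamics in the limit; both estimates can be approached by leveraging the thinned-degree representation of $\mathrm{CM}_n(\boldsymbol{d}, p_n(\lambda))$ as $\mathrm{CM}_{\tilde n}(\tilde{\boldsymbol{d}})$ restricted to $[n]$ already used to prove Theorem~\ref{thm_percolation}.
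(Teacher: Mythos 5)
Your high-level plan — exhibit a Markovian coupling of the percolated graphs, argue the cluster merger rate is asymptotically multiplicative (proportional to a product of cluster masses which are themselves proportional to cluster sizes), then invoke the Feller property of the multiplicative coalescent — captures the correct intuition and is in the same spirit as the paper's proof. However, there are two genuine gaps.

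First, the merger rate in the percolation coupling is not driven by total degrees $D_A D_B / \ell_n$. Conditionally on the revealed structure of $\mathrm{CM}_n(\boldsymbol{d},p_n(\lambda))$, the unrevealed part of $\mathrm{CM}_n(\boldsymbol{d})$ is a uniform matching of the \emph{open} (still unpaired) half-edges, not of all half-edges; so the merger rate of two clusters is governed by the product of their open half-edge counts, not of their total degrees. The two quantities turn out to be asymptotically proportional (this is essentially the content of Lemma~\ref{thm:open-comp}), but the identification is a nontrivial ingredient of the proof, not a byproduct of the degree asymptotic $D_A(\lambda)/|A(\lambda)|\to c_*$ that you propose. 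The paper in fact avoids working with the edge-weight coupling for this rate computation altogether: it passes through an auxiliary dynamic construction (Algorithm~\ref{algo-dyn-cons}, with exponential clocks on half-edges) and a sandwiching result (Proposition~\ref{prop:coupling-whp}) to transfer to that setting, because there the merger rate is manifestly $\mathcal{O}_A\mathcal{O}_B/s_1(t)$.

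Second, and more seriously, the crucial step from ``the dynamics are approximately multiplicative'' to ``the joint laws converge'' is missing. Aldous's Feller property (\cite[Proposition~5]{A97}) is a statement about the \emph{exact} multiplicative coalescent semigroup: if initial laws converge in $\ell^2_{\shortarrow}$ then the time-$t$ laws converge. Your process $\mathbf{C}_n(\lambda)$ is not a multiplicative coalescent — the open-half-edge masses deplete by $2$ at every merger and at every surplus edge — so you cannot directly propagate the initial convergence via the semigroup. Establishing joint convergence for an approximate MC requires either a uniqueness-for-the-martingale-problem argument (which you neither state nor sketch) or a coupling with an exact MC plus quantitative control of the discrepancy. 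The paper takes the second route: it introduces a modified process (Algorithm~\ref{algo:modify-dyn-cons}) in which paired half-edges stay alive, so that the open-half-edge vector evolves as an \emph{exact} standard multiplicative coalescent (Remark~\ref{rem:MC-exact-limit}); it extends the Feller property to a bivariate ``mass and weight'' coalescent (Theorem~\ref{thm:AMC-2D}) in order to carry the component sizes along with the half-edge masses; and it then controls the gap between the true and modified processes via the subgraph coupling, \cite[Corollary~18~(a)]{A97}, and the elementary Lemma~\ref{lem_ordering_convergence}. These comparison devices are the heart of the proof and are absent from your proposal; without them the argument does not close.

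Your remark about surplus edges contributing only $O_{\sss\PR}(1)$ perturbations is in the right direction but is not the controlling error: the dominant discrepancy comes from the depletion of open half-edges at every merger event, and it is handled by the $\ell^2$-norm comparison between the modified and true processes, not by bounding surplus-edge counts alone.
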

\begin{remark}\normalfont \label{rem:mult-coal-heuristics}
The coupling for the limiting process in Theorem~\ref{thm_multiple_convergence} is given by the multiplicative coalescent process described in Section~\ref{sub_sec_notation}. 
This will become more clear when we describe the ideas of the proof. 
To understand this intuitively, notice that the component $\mathscr{C}_{\sss (i)}^p(\lambda)$ consists of some paired half-edges which form the edges of the percolated graph, and some \emph{open half-edges} which were deleted due to percolation.
Denote by $\mathcal{O}_i^p(\lambda)$, the total number of open half-edges of $\mathscr{C}_{\sss (i)}^p(\lambda)$. 
One can think of $\mathcal{O}_i^p$ as the mass of $\mathscr{C}_{\sss (i)}^p$.
Now, as we change the value of the percolation parameter from $p_n(\lambda)$ to $p_n(\lambda+d\lambda)$, exactly one edge is added to the graph and the two endpoints are chosen  proportional to the number of open half-edges of the components  of $\mathrm{CM}_n(\boldsymbol{d},p_n(\lambda))$. 
 By the above heuristics, $\mathscr{C}_{\sss (i)}^p$ and $\mathscr{C}_{\sss (j)}^p$ merge at rate proportional to $\mathcal{O}_i^p\mathcal{O}_j^p$ and creates a component of mass $\mathcal{O}_i^p+\mathcal{O}_j^p-2$. 
Later, we will show that the mass of a component is approximately proportional to the component size. 
Therefore, the component sizes merge \emph{approximately} like the multiplicative coalescent over the critical scaling window.
\end{remark}

\begin{remark}\normalfont  Janson~\cite{J09} studied the phase transition of the maximum component size for percolation on a super-critical configuration model. 
The critical value was shown to be $p=1/\nu$. 
This is precisely the reason behind taking $p_n$ of the form given by  Assumption~\ref{assumption2}~\ref{assumption2-2}. 
The width of the scaling window is intimately related to the asymptotics of the susceptibility function $\sum_i|\mathscr{C}_{\sss (i)}|^2/n$. 
In fact, if $\sum_i|\mathscr{C}_{\sss (i)}|^2\sim n^{1+\eta}$, then the width of the critical window turns out to be $n^{\eta}$ and the largest component sizes are of the order $n^{(1+\eta)/2}$.
This has been universally observed in the random graph literature \cite{A97,NP10b,R12,Jo10,BHL10,DLV14}, even when the scaling limit is not in the same universality class as Erd\H{o}s-R\'enyi random graphs \cite{BHL12,DHLS16} and the same turns out to be the case in this paper.
\end{remark}

\begin{remark}\normalfont Theorem~\ref{thm_main} and Theorem~\ref{thm_surplus} also hold for configuration models conditioned on simplicity. We do not give a proof here. The arguments in \cite[Section 7]{Jo10} can be followed verbatim to obtain a proof of this fact. As a result,  Theorem~\ref{thm_percolation} and Theorem~\ref{thm_multiple_convergence} also hold, conditioned on simplicity.
\end{remark}

The rest of the paper is organized as follows: In Section~\ref{sec_literature_overview}, we give a brief overview of the relevant literature. This will enable the reader  to understand better the relation  of this work  to the large body of literature already present. Also, it will become clear why the choices of the parameters in  Assumption~\ref{assumption1}~\ref{assumption1-3} and  Assumption~\ref{assumption2}~\ref{assumption2-2} should correspond to the critical scaling window. We prove Theorems~\ref{thm_main} and \ref{thm_surplus} in Section~\ref{sec_proof}.  In Section~\ref{sec_vertex} we find the asymptotic degree distribution in each component. This is used along with Theorem~\ref{thm_surplus} to establish Theorem~\ref{thm_percolation} in Section~\ref{sec_percolation}. In Section~\ref{sec_multidimensional}, we analyze the evolution of the component sizes over the percolation critical window and prove Theorem~\ref{thm_multiple_convergence}.

\section{Discussion} \label{sec_discussion}
\subsection{Literature overview}\label{sec_literature_overview}\noindent {\bf Erd\H{o}s-R\'enyi type behavior.} We first explain what `Erd\H{o}s-R\'enyi type behavior' means. The study of \emph{critical window} for random graphs started with the seminal paper \cite{A97} on the Erd\H{o}s-R\'enyi random graphs with $p=n^{-1}(1+\lambda n^{-1/3})$. Aldous showed  in this regime that the largest components are of asymptotic size $n^{2/3}$ and the ordered component sizes (scaled by $n^{2/3}$) asymptotically have the same distribution as the ordered excursion lengths of a Brownian motion with a negative parabolic drift. 
Aldous also considered a natural coupling of the re-scaled vectors of component sizes as $\lambda$ varies, and viewed it as a dynamic $\ell^2_{\shortarrow}$-valued stochastic process. It was shown that the dynamic process can be described by a process called the \emph{standard multiplicative coalescent}, which has the Feller property. This implies the convergence of the component sizes jointly for different $\lambda$ values.
 In Theorem~\ref{thm_multiple_convergence}, we show that similar results hold for the configuration model under a very general set of assumptions. Of course, for general configuration models, there is no obvious way to couple the graphs such that the location parameter in the scaling window varies and percolation seems to be the most natural way to achieve this. By \cite{F07, J09}, percolation on a configuration model can be viewed as a configuration model with a random degree sequence  and this is precisely the reason for studying percolation in this paper.\vspace{.2cm}  \\ 
{\bf Universality and optimal assumptions.} In \cite{BHL10} it was shown that, inside the critical scaling window, the ordered component sizes (scaled by $n^{2/3}$) of an inhomogeneous random graph with \begin{equation}p_{ij}= 1- \exp{\bigg(\frac{-(1+\lambda n^{-1/3})w_iw_j}{\sum_{k \in [n]} w_k} \bigg)}
\end{equation} converge to the ordered excursion lengths of an inhomogeneous Brownian motion with a parabolic drift under only finite third-moment assumption on the weight distribution. We establish a counterpart of this for the configuration model in Theorem~\ref{thm_main}. 
Later Nachmias and Peres~\cite{NP10b} studied the case of percolation scaling window on the random regular graph; for percolation on the configuration model similar results were obtained by Riordan~\cite{R12} for bounded  maximum degrees. 
Joseph~\cite{Jo10} obtained the same scaling limits as Theorem~\ref{thm_main} for the component sizes when the degrees are i.i.d samples from a distribution having finite third moment. 
Theorem~\ref{thm_surplus} and  Theorem~\ref{thm_percolation} prove stronger versions of all these existing results for the configuration model under the optimal assumptions. 
Further, in Theorem~\ref{thm_multiple_convergence}, we give a dynamic picture for percolation cluster sizes in the critical window and show that this dynamics can be approximated by the multiplicative coalescent.\vspace{.2cm} \\
{\bf Comparison to branching processes.} In \cite{MR95,JL09} the phase transition for the component sizes of $\mathrm{CM}_n(\boldsymbol{d})$ was identified in terms of the parameter $\nu=\mathbbm{E}[D(D-1)]/\mathbbm{E}[D]$.
 Janson and Luczak~\cite{JL09} showed that the local neighborhoods of the configuration model can be approximated by a branching process $\mathcal{X}$ which has $\nu$ as its expected progeny and thus, when $\nu >1$, $\mathrm{CM}_n(\boldsymbol{d})$ has a  component $\mathscr{C}_{\max}$ of approximate size $\rho n$, where $\rho$ is the survival probability of $\mathcal{X}$. Further, the progeny distribution of $\mathcal{X}$ has finite variance when $\mathbbm{E}[D^3]<\infty$. Now, for a branching process with mean $\approx 1+\varepsilon$ and finite variance $\sigma^2$, the survival probability is approximately $2\sigma^{-2}\varepsilon$ for small $\varepsilon >0$. This seems to suggest that the largest component size under Assumption~\ref{assumption1} should be of the order $n^{2/3}$ since $\varepsilon=\Theta(n^{-1/3})$. Theorem~\ref{thm_main} mirrors this intuition and shows that in fact all the largest component sizes are of the order $n^{2/3}$.

\subsection{Proof ideas} The proof of Theorem \ref{thm_main} uses standard functional central limit theorem argument. 
Indeed we associate a suitable semi-martingale with the graph obtained from an \emph{exploration algorithm} used to explore the connected components of $\mathrm{CM}_n(\boldsymbol{d})$. 
The martingale part is then shown to converge to an inhomogeneous Brownian motion, and the drift part is shown to converge to a parabola. 
The fact that the component sizes can be expressed in terms of the hitting times of the semi-martingale implies the finite-dimensional convergence of the component sizes. 
The convergence with respect to $\ell^2_{\shortarrow}$ is then concluded using size-biased point process arguments formulated by Aldous~\cite{A97}. 
Theorem~\ref{thm_surplus} requires a careful estimate of the tail probability of the distribution of  surplus edges  when the component size is small and we obtain this using martingale estimates in Lemma~\ref{lem_surplus_delta_bound}.  Theorem~\ref{thm_percolation} is proved by showing that the percolated degree sequence satisfies Assumption~\ref{assumption1} almost surely. 
Finally, we prove Theorem \ref{thm_multiple_convergence} in Section~\ref{sec_multidimensional}.
The key challenges here are that, for each fixed $n$, the components do not merge according to their component sizes, and that the components do not merge exactly like a multiplicative coalescent over the scaling window. 
Thus the main theme of the proof lies in approximating the evolution of the component sizes over the percolation scaling window with a suitable dynamic process that is an exact multiplicative coalescent. 
\subsection{Open problems}
  \begin{enumerate}[(i)]
  \item Theorem~\ref{thm_multiple_convergence} proves the joint convergence at finitely many locations in the scaling window. However, the convergence of $(\mathbf{C}_n(\lambda))_{\lambda\in \mathbbm{R}}$ as a process in $\mathbb{D}((-\infty,\infty),\ell^2_{\shortarrow})$ should also hold provided that one can verify a suitable tightness criterion.
  \item A reason for studying percolation in this paper is to understand the minimal spanning tree of the giant component. For a super-critical configuration model with i.i.d edge weights, it should be the case that the minimal spanning tree can be described by the critically percolated graph at a very high location of the scaling window. 
Such results were obtained in \cite{ABGM13} for the minimal spanning tree on a complete graph. 
The study of minimal spanning trees is still an open question, even for random regular graphs.
    \end{enumerate}

\section{Proofs of Theorems \texorpdfstring{\ref{thm_main}}{1} and \ref{thm_surplus}}
\label{sec_proof}
 \subsection{The exploration process} \label{exploration} Let us explore the graph sequentially using a natural approach outlined in \cite{R12}. At step $k$, divide the set of half-edges into three groups; sleeping half-edges $\mathcal{S}_{k}$, active half-edges $\mathcal{A}_{k}$, and  dead half-edges $\mathcal{D}_{k}$. The depth-first exploration process can be summarized in the following algorithm:
 \begin{algo}[DFS exploration]\label{algo:1}\normalfont At $k=0$, $\mathcal{S}_{k}$ contains all the half-edges and $\mathcal{A}_{k}$, $\mathcal{D}_{k}$ are empty. While ($\mathcal{S}_{k}\neq\varnothing$ or $\mathcal{A}_{k}\neq\varnothing$) we do the following at  stage $k+1$:
 \begin{itemize}
 \item[S1] If $\mathcal{A}_{k}\neq\varnothing$, then take the smallest half-edge $a$ from $\mathcal{A}_{k}$.
 \item[S2] Take the half-edge $b$ from $\mathcal{S}_{k}$ that is paired to $a$. Suppose $b$ is attached to a vertex $w$ (which is necessarily  not discovered yet). Declare $w$ to be discovered,  let $r= d_w-1$ and $b_{w1}, b_{w2}, \dots b_{wr}$ be the half-edges of $w$ other than $b$. Declare $b_{w1}$, $b_{w2}$,..., $b_{wr} ,b$ to be smaller than all other half-edges in $\mathcal{A}_{k}$. 
 Also order the half-edges of $w$ among themselves as $b_{w1} > b_{w2}>\dots  >b_{wr} >b$. 
 Now identify $\mathcal{B}_{k} \subset \mathcal{A}_{k} \cup \{ b_{w1}, b_{w2},\dots, b_{wr} \}$ as the collection of all half-edges in $\mathcal{A}_{k} $ paired to one of the $b_{wi}$'s and the corresponding $b_{wi}$'s. Similarly identify $\mathcal{C}_{k} \subset \{b_{w1}, b_{w2},\dots,b_{wr} \}$ which is the collection of self-loops incident to $w$. Finally, declare $\mathcal{A}_{k+1} = \mathcal{A}_{k} \cup \{b_{w1}, b_{w2},\dots,b_{wr} \} \setminus \big(\mathcal{B}_{k} \cup \mathcal{C}_{k}\big)$, $\mathcal{D}_{k+1} = \mathcal{D}_{k} \cup \{ a,b \} \cup \mathcal{B}_{k} \cup \mathcal{C}_{k}$ and $\mathcal{S}_{k+1}= \mathcal{S}_{k} \setminus \big( \{ b \} \cup \{b_{w1}, b_{w2},...,b_{wr} \} \big)$. Go to stage $k+2$.
  \item[S3] If $\mathcal{A}_{k}= \varnothing$ for some $k$, then take out one half-edge $a$ from $\mathcal{S}_{k}$ uniformly at random and identify the vertex $v$ incident to it. Declare $v$ to be discovered. Let $r= d_v-1$ and assume that $a_{v1}$, $a_{v2}$,..., $a_{vr}$ are the half-edges of $v$ other than $a$ and identify the collection of half-edges involved in self-loops $\mathcal{C}_{k}$ as in Step 2. Order the half-edges of $v$ as $a_{v1}>a_{v2}>\dots>a_{vr}>a$. Set $\mathcal{A}_{k+1} = \{ a, a_{v1}$, $a_{v2}$,..., $a_{vr}\} \setminus \mathcal{C}_{k}$, $\mathcal{D}_{k+1} = \mathcal{D}_{k} \cup \mathcal{C}_{k}$, and $\mathcal{S}_{k+1}= \mathcal{S}_{k} \setminus  \{ a, a_{v1}, a_{v2},...,a_{vr} \} $. Go to stage $k+2$.
 \end{itemize}
 \end{algo}
 In words, we explore a new vertex at each stage and throw away all the half-edges involved in  a loop/multiple edge/cycle  with the vertex set already discovered before proceeding to the next stage.
 The ordering of the half-edges is such that the connected components of $\mathrm{CM}_{n}(\boldsymbol{d})$ are explored in the depth-first way. We call the half-edges of  $\mathcal{B}_{k} \cup \mathcal{C}_{k}$ $cycle$ half-edges because they create loops, cycles or multiple edges in the graph. Let
 \begin{equation}\label{name:c-B-A}
 A_k:=| \mathcal{A}_{k} |,\quad c_{(k+1)}:= (| \mathcal{B}_{k} | +| \mathcal{C}_{k} | )/2 ,\quad  U_k:= | \mathcal{S}_{k} |.
  \end{equation} Let $d_{\sss(j)}$ be the degree of the $j^{th}$ explored vertex and  define the following process:

 \begin{equation}\label{walk1}
 S_{n}(0)=0, \quad S_{n}(i)=\sum_{j=1}^{i}(d_{\sss(j)} -2-2c_{\sss(j)}).
 \end{equation}
 
 The process $\mathbf{S}_{n}= ( S_{n}(i))_{i \in [n]}$ ``encodes the component sizes as lengths of path segments above past minima'' as discussed in \cite{A97}. Suppose $\mathscr{C}_{i}$ is the $i^{th}$ connected component explored by the above exploration process. Define
\begin{equation}
\tau_{k}=\inf \big\{ i:S_{n}(i)=-2k \big\}.
\end{equation}
Then  $\mathscr{C}_{k}$ is discovered between the times $\tau_{k-1}+1$ and $\tau_k$ and  $|\mathscr{C}_{k}|=\tau_{k}-\tau_{k-1}$.

\subsection{Size-biased exploration}
The vertices are explored in a size-biased manner with sizes proportional to their degrees, i.e., if we denote by $v_{\sss(i)}$ the $i^{th}$ explored vertex in Algorithm~\ref{algo:1} and by $d_{\sss(i)}$ the degree of $v_{\sss(i)}$, then
 \begin{equation}
 \mathbbm{P}\big(v_{\sss (i)}=j \vert v_{\sss (1)},v_{\sss (2)},...,v_{\sss (i-1)}\big)=\frac{d_{j}}{\sum_{k\notin \mathscr{V}_{i-1}} d_{k}}= \frac{d_{j}}{\sum_{k \in [n]} d_k - \sum_{k=1}^{i-1} d_{\sss (k)}},\ \forall j\in \mathscr{V}_{i-1},
 \end{equation}
 where $\mathscr{V}_{i}$ denotes the first $i$ vertices to be discovered in the above exploration process.
 The following lemma will be used crucially in the proof of Theorem~\ref{thm_main}:
\begin{lemma} \label{lem_con_1} Suppose that Assumption~\ref{assumption1} holds and denote $\sigma_r = \mathbbm{E} [ D^r ] $ and $\mu = \mathbbm{E} [ D ]$. Then for all $t > 0 $, as $n \to \infty$,
\begin{equation} \label{lem_eq2}
   \sup_{ u\leq t } \Big| n^{-2/3} \sum_{i=1}^{\lfloor n^{2/3} u \rfloor} d_{\sss (i)} - \frac{\sigma_{2}u}{\mu} \Big| \xrightarrow{\mathbbm{P}} 0,
  \end{equation} and
  \begin{equation} \label{lem_eq1}
   \sup_{ u\leq t } \Big| n^{-2/3} \sum_{i=1}^{\lfloor n^{2/3} u \rfloor} d_{\sss (i)}^{2} - \frac{\sigma_{3}u}{\mu} \Big| \xrightarrow{\mathbbm{P}} 0.
  \end{equation}
 \end{lemma}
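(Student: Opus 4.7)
My plan is to treat both \eqref{lem_eq2} and \eqref{lem_eq1} by a common Doob--Meyer decomposition argument applied to the exploration filtration. For $r\in\{1,2\}$ set $X^{(r)}_k := \sum_{i=1}^{k} d_{\sss(i)}^r$, and let $\mathcal{F}_{k}$ denote the $\sigma$-algebra generated by the first $k$ steps of the size-biased vertex exploration. The size-biased sampling rule gives
\begin{equation*}
\E\bigl[d_{\sss(i)}^r \mid \mathcal{F}_{i-1}\bigr] = \frac{\sum_{j\notin\mathscr{V}_{i-1}} d_j^{r+1}}{\ell_n - X^{(1)}_{i-1}},
\end{equation*}
so that $M^{(r)}_k := X^{(r)}_k - A^{(r)}_k$ is an $\mathcal{F}$-martingale, where $A^{(r)}_k := \sum_{i=1}^{k}\E[d_{\sss(i)}^r \mid \mathcal{F}_{i-1}]$ is its predictable compensator. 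I will show that, uniformly over $k\le \lfloor n^{2/3}t\rfloor$, (a) $A^{(r)}_k = k\sigma_{r+1}/\mu + \oP(n^{2/3})$, and (b) $\sup_k|M^{(r)}_k| = \oP(n^{2/3})$ via Doob's $L^2$ maximal inequality; the lemma then follows after dividing by $n^{2/3}$.

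The key deterministic input is that Assumption~\ref{assumption1}~\ref{assumption1-1}--\ref{assumption1-2} forces $d_{\max} = o(n^{1/3})$: $D_n \dto D$ together with $\E[D_n^3]\to\E[D^3]$ implies, via a Skorohod representation and the $L^1$-convergence theorem for a.s.-convergent sequences with converging means, that $\{D_n^3\}$ is uniformly integrable, so a vertex of degree $\varepsilon n^{1/3}$ would contribute at least $\varepsilon^3$ to the tail of the empirical third moment and cannot exist for $n$ large. Consequently $X^{(1)}_{k} \le k\, d_{\max} = o(n)$ deterministically for $k \le \lfloor n^{2/3}t\rfloor$, and the compensator denominator satisfies $\ell_n - X^{(1)}_{i-1} = \mu n(1+o(1))$ uniformly. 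For the numerator I need $X^{(r+1)}_{i-1} = \oP(n)$: for $r=1$ the crude bound $\E[d_{\sss(i)}^2 \mid \mathcal{F}_{i-1}] \le 2\sigma_3/\mu + o(1)$ (from $\sum_j d_j^3 = O(n)$) already gives $\E X^{(2)}_k = O(n^{2/3})$ and hence $X^{(2)}_{i-1} = \OP(n^{2/3}) = \oP(n)$; for $r=2$ I iterate once more, using $\sum_j d_j^4 \le d_{\max}\sum_j d_j^3 = o(n^{4/3})$ to obtain $\E[d_{\sss(i)}^3 \mid \mathcal{F}_{i-1}] = o(n^{1/3})$ and therefore $\E X^{(3)}_k = o(n)$. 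Substituting these estimates into the compensator expression and summing yields $A^{(r)}_k = (k\sigma_{r+1}/\mu)(1+\oP(1))$ uniformly.

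The martingale step uses $[M^{(r)}]_k \le \sum_{i=1}^{k}\E[d_{\sss(i)}^{2r}\mid\mathcal{F}_{i-1}]$. For $r=1$, $\sum_j d_j^3 = O(n)$ gives $[M^{(1)}]_k = O(n^{2/3})$, and Doob's inequality produces $\sup_k|M^{(1)}_k| = \OP(n^{1/3}) = \oP(n^{2/3})$. The main obstacle is the case $r=2$, which nominally involves the fifth moment $\sum_j d_j^5$ that is not controlled by Assumption~\ref{assumption1}. I will circumvent this with the deterministic bound $\sum_j d_j^5 \le d_{\max}^2 \sum_j d_j^3 = o(n^{2/3})\cdot O(n) = o(n^{5/3})$, which yields $\E[d_{\sss(i)}^4 \mid \mathcal{F}_{i-1}] = o(n^{2/3})$ and hence $[M^{(2)}]_k = o(n^{4/3})$. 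Doob's $L^2$ maximal inequality then delivers $\sup_k|M^{(2)}_k| = \oP(n^{2/3})$. Combining the compensator and martingale estimates and dividing by $n^{2/3}$ gives both \eqref{lem_eq2} and \eqref{lem_eq1}.
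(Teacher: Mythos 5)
Your proof is correct, and it is essentially an unpacked version of the paper's argument. The paper proves Lemma~\ref{lem_con_1} in one line by invoking the black-box result \cite[Lemma 8.2]{BSW14} (restated as Lemma~\ref{lem_general}) for size-biased reorderings, with the hypotheses verified via Lemma~\ref{lem_d_max} ($d_{\max}^3 = o(n)$, hence $d_{\max}=o(n^{1/3})$). You instead carry out the Doob--Meyer decomposition for the exploration filtration directly — compensator estimate plus Doob's $L^2$ maximal inequality — which is precisely the mechanism hidden inside the cited lemma. The two observations your argument hinges on are exactly the ones that make the cited lemma's hypotheses hold here: (i) $d_{\max}=o(n^{1/3})$, which you correctly extract from uniform integrability of $\{D_n^3\}$ (Skorohod plus Scheff\'e); and (ii) the fifth-moment control $\sum_j d_j^5\le d_{\max}^2\sum_j d_j^3 = o(n^{5/3})$, which is the analogue of the condition $(m\gamma_n)^{-1}w_{\max}\to 0$ in Lemma~\ref{lem_general} when $w_i=d_i^2$. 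The benefit of your route is self-containedness; the benefit of the paper's is brevity, at the cost of importing an external lemma. One minor wording issue: near the end of your compensator step you state a crude bound of the form $\E[d_{\sss(i)}^2\mid\mathcal{F}_{i-1}]\le 2\sigma_3/\mu + o(1)$ — the constant $2$ is harmless but arises from the deterministic lower bound $\ell_n - X^{(1)}_{i-1}\ge \ell_n/2$ for large $n$, which you should make explicit since it depends on $k d_{\max}=o(n)$ uniformly over $k\le tn^{2/3}$.
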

 The proof of this lemma follows from the two lemmas stated below:
 \begin{lemma}[{\cite[Lemma 8.2]{BSW14}}]\label{lem_general}
Consider a weight sequence $(w_i)_{i\in [n]}$ and let $m=m(n)\leq n$ be increasing with $n$. Let $\{v(i)\}_{i\in[n]}$ be the size-biased reordering of indices $[n]$, where the size of index $i$ is $d_i/\ell_n$. 
Define $\gamma_n = \sum_{i\in[n]}w_id_i/\ell_n$ and  $Y(t) = (m\gamma_n)^{-1}\sum_{i=1}^{\lfloor mt\rfloor} w_{\sss v(i)}$. Further, let $d_{\max} = \max_{i\in [n]}d_i$, and $w_{\max} = \max_{i\in [n]}w_i$. Assume that 
\begin{equation}\label{eq:conditions-size-biased}
 \lim_{n\to\infty}md_{\max}/\ell_n = 0,\quad \text{and}\quad \lim_{n\to\infty} (m\gamma_n)^{-1}w_{\max} = 0.
\end{equation}Then, for any $t>0$, as $n\to\infty$, $
 \sup_{u\leq t}|Y(t)-t|\pto 0.
$
 \end{lemma}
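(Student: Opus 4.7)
The natural strategy is a Doob--Meyer decomposition of the cumulative sum $\Sigma(u):=\sum_{i=1}^{\lfloor mu\rfloor}w_{v(i)}$ with respect to the filtration $\mathcal{F}_i=\sigma(v(1),\dots,v(i))$ generated by the size-biased order. Since $v(i)$, given $\mathcal{F}_{i-1}$, is drawn from the unused indices with probability proportional to $d_j$, one has
\[
\mathbbm{E}\bigl[w_{v(i)}\mid\mathcal{F}_{i-1}\bigr]=\frac{\ell_n\gamma_n-S_{i-1}}{\ell_n-R_{i-1}},\qquad R_i:=\sum_{k\leq i}d_{v(k)},\;\;S_i:=\sum_{k\leq i}w_{v(k)}d_{v(k)}.
\]
Setting $A(u):=(m\gamma_n)^{-1}\sum_{i\leq\lfloor mu\rfloor}\mathbbm{E}[w_{v(i)}\mid\mathcal{F}_{i-1}]$ and $M(u):=Y(u)-A(u)$, the process $M$ is a martingale, and it suffices to establish $\sup_{u\leq t}|M(u)|\pto 0$ together with $\sup_{u\leq t}|A(u)-u|\pto 0$.

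The martingale estimate is comparatively quick. The first hypothesis $md_{\max}/\ell_n\to 0$ forces $\ell_n-R_{i-1}\geq \ell_n/2$ uniformly for $i\leq\lfloor mt\rfloor$ and $n$ large, whence the deterministic bound $\mathbbm{E}[w_{v(i)}\mid\mathcal{F}_{i-1}]\leq 2\gamma_n$; combining with $w_{v(i)}^2\leq w_{\max}w_{v(i)}$ yields $\sum_{i\leq\lfloor mt\rfloor}\mathbbm{E}[w_{v(i)}^2]\leq 2w_{\max}\,m\gamma_n t$. Doob's $L^2$-maximal inequality then gives
\[
\mathbbm{E}\Bigl[\sup_{u\leq t}M(u)^2\Bigr]\leq \frac{4}{(m\gamma_n)^2}\sum_{i=1}^{\lfloor mt\rfloor}\mathbbm{E}[w_{v(i)}^2]\lesssim \frac{w_{\max}\,t}{m\gamma_n}\longrightarrow 0
\]
by the second hypothesis in \eqref{eq:conditions-size-biased}.

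The compensator analysis is the main obstacle. Writing $\mathbbm{E}[w_{v(i)}\mid\mathcal{F}_{i-1}]-\gamma_n=(\gamma_nR_{i-1}-S_{i-1})/(\ell_n-R_{i-1})$ and using the same lower bound on the denominator, the task reduces to proving $(m\gamma_n\ell_n)^{-1}\sum_{i\leq\lfloor mt\rfloor}|\gamma_nR_{i-1}-S_{i-1}|\pto 0$. The naive deterministic bound $|\gamma_nR_{i-1}-S_{i-1}|\leq (\gamma_n+w_{\max})md_{\max}$ is not sharp enough: it yields an error of order $m^2d_{\max}w_{\max}/(\gamma_n\ell_n)$ that is killed by neither hypothesis alone. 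One instead expands $\gamma_nR_{i-1}-S_{i-1}=\sum_{k<i}(\gamma_n-w_{v(k)})d_{v(k)}$ and performs a second Doob--Meyer decomposition of this inner sum against $(\mathcal{F}_k)$. Its compensator satisfies $|\mathbbm{E}[(\gamma_n-w_{v(k)})d_{v(k)}\mid\mathcal{F}_{k-1}]|\lesssim d_{\max}\gamma_n$ --- using the estimate $|\sum_j(\gamma_n-w_j)d_j^2|\leq 2d_{\max}\ell_n\gamma_n$ --- and its contribution to the normalized sum is of order $tmd_{\max}/\ell_n=o(1)$ by the first hypothesis. The martingale part of the inner sum is controlled by another $L^2$ estimate, whose normalized bound vanishes using the second hypothesis. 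Combining these gives $\sup_{u\leq t}|A(u)-u|\pto 0$ and completes the argument.
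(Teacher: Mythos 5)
Your proof is correct. Note first that the paper does not actually prove this lemma: the statement carries the citation \texttt{[BSW14, Lemma 8.2]} and is invoked as an external result, so there is no internal proof to compare against. What you have written is a self-contained and valid argument.

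Your scheme is the natural one: Doob--Meyer decomposition of the normalized partial sum with respect to the size-biased exploration filtration, an $L^2$ maximal inequality to kill the martingale part, and a separate analysis of the compensator error $\mathbbm{E}[w_{v(i)}\mid\mathcal{F}_{i-1}]-\gamma_n=(\gamma_n R_{i-1}-S_{i-1})/(\ell_n-R_{i-1})$. You correctly spot that the naive deterministic bound on $|\gamma_n R_{i-1}-S_{i-1}|$ produces a term $\sim m^2 d_{\max}w_{\max}/(\gamma_n\ell_n)=(md_{\max}/\ell_n)\cdot m\cdot(w_{\max}/(m\gamma_n))$ that neither hypothesis kills on its own, and you repair this with a second Doob--Meyer decomposition of $\sum_{k<i}(\gamma_n-w_{v(k)})d_{v(k)}$. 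I checked the estimates: the inner compensator bound $|\mathbbm{E}[(\gamma_n-w_{v(k)})d_{v(k)}\mid\mathcal{F}_{k-1}]|\leq 4\gamma_n d_{\max}$ follows from $\sum_j w_jd_j^2\leq d_{\max}\ell_n\gamma_n$ and $\sum_j d_j^2\leq d_{\max}\ell_n$ applied on the complement of $\mathscr{V}_{k-1}$, and the inner $L^2$ bound, after normalization by $(\gamma_n\ell_n)^2$, is of order $t(md_{\max}/\ell_n)^2 + t\,(md_{\max}/\ell_n)^2\cdot(w_{\max}/(m\gamma_n))$, which indeed goes to zero (your one-line attribution of this to ``the second hypothesis'' is slightly off --- both hypotheses enter --- but the conclusion stands).

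One remark: the inner decomposition is more machinery than is needed. Since $R_i$ and $S_i$ are both nondecreasing in $i$ with nonnegative increments, one can split $|\gamma_n R_{i-1}-S_{i-1}|\leq\gamma_n R_{i-1}+S_{i-1}$ and treat the two pieces separately. The first is bounded deterministically by $\gamma_n m d_{\max}=o(\gamma_n\ell_n)$ via the first hypothesis. For the second, monotonicity gives $\sup_{i\leq\lfloor mt\rfloor}S_{i-1}=S_{\lfloor mt\rfloor-1}$, and since $\mathbbm{E}[w_{v(k)}d_{v(k)}\mid\mathcal{F}_{k-1}]\leq 2\gamma_n d_{\max}$ one gets $\mathbbm{E}[S_{\lfloor mt\rfloor}]\leq 2mt\gamma_n d_{\max}=o(\gamma_n\ell_n)$, so Markov's inequality finishes the job with no martingale argument at all on the compensator side. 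Your route reaches the same end and is correct, but this shortcut avoids the second $L^2$ estimate entirely.
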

\begin{lemma} \label{lem_d_max} Assumption~\ref{assumption1} implies
 \begin{equation}
  \lim_{k\to \infty}\lim_{n\to \infty} \frac{1}{n} \sum_{j\in [n]}\mathbbm{1}_{\{ d_{j}>k\} }d_{j}^r=0,\quad r=1,2,3.
 \end{equation}
For $r=3$, in particular, this implies $d_{\max}^{3}=o(n)$.
\end{lemma}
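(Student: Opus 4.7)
The key reduction is to recognize the sum as an expectation. If $D_n$ denotes the degree of a uniformly chosen vertex of $\mathrm{CM}_n(\boldsymbol{d})$ as in Assumption~\ref{assumption1}, then
\begin{equation*}
\frac{1}{n}\sum_{j\in[n]} \mathbbm{1}_{\{d_j > k\}}\, d_j^r \;=\; \E\big[\mathbbm{1}_{\{D_n > k\}} D_n^r\big],
\end{equation*}
so the claim reduces to $\lim_{k\to\infty}\limsup_{n\to\infty} \E[\mathbbm{1}_{\{D_n > k\}} D_n^r] = 0$ for $r \in \{1,2,3\}$, which is exactly the statement that the family $\{D_n^r\}_{n\geq 1}$ is uniformly integrable for each such $r$.

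The plan is to first upgrade Assumption~\ref{assumption1}\ref{assumption1-1}--\ref{assumption1-2} to uniform integrability of $\{D_n^3\}_{n\geq 1}$, and then transfer it down to the smaller moments. Since $D_n \dto D$ together with $\E[D_n^3] \to \E[D^3] < \infty$ is equivalent, by a standard Vitali-type argument, to $L^1$ convergence of $D_n^3$ to $D^3$, the family $\{D_n^3\}_{n\geq 1}$ is uniformly integrable. The elementary inequality $x^r \leq 1 + x^3$ for $x\geq 0$ and $r \in \{1,2,3\}$ then transfers the uniform integrability to $\{D_n^r\}_{n\geq 1}$, which gives the first assertion of the lemma.

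For the consequence $d_{\max}^3 = o(n)$, fix $\varepsilon > 0$ and, using the first assertion with $r=3$, choose $k$ and $N$ so that $\frac{1}{n}\sum_j \mathbbm{1}_{\{d_j > k\}} d_j^3 \leq \varepsilon$ for all $n\geq N$. For such $n$: if $d_{\max}(n) > k$ then the single term corresponding to the maximum-degree vertex already shows $d_{\max}(n)^3 \leq \sum_j \mathbbm{1}_{\{d_j > k\}} d_j^3 \leq \varepsilon n$; otherwise $d_{\max}(n) \leq k$ and trivially $d_{\max}(n)^3/n \leq k^3/n \to 0$. In either case $\limsup_n d_{\max}(n)^3/n \leq \varepsilon$, and sending $\varepsilon \downarrow 0$ finishes the proof.

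There is no substantial obstacle in this argument; the only step requiring any care is the passage from convergence in distribution plus third-moment convergence to $L^1$ convergence of $D_n^3$, which is entirely classical and is the reason Assumption~\ref{assumption1}\ref{assumption1-2} is phrased precisely as the convergence of the third empirical moment to $\E[D^3]$.
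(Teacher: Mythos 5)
Your proof is correct. Worth noting: the paper actually gives no proof of Lemma~\ref{lem_d_max}; it is stated as a standard consequence of Assumption~\ref{assumption1} (it is used to prove Lemma~\ref{lem_con_1} and for the $d_{\max}^3=o(n)$ bound, but no argument is supplied). So you are filling in an omitted step, and your route via uniform integrability is the natural one.

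Two small remarks. First, the Skorokhod--plus--Scheff\'e upgrade can be bypassed by a purely elementary computation: writing
\begin{equation*}
\frac{1}{n}\sum_{j\in[n]}\mathbbm{1}_{\{d_j>k\}}d_j^3=\frac{1}{n}\sum_{j\in[n]}d_j^3-\sum_{l=0}^{k}l^3\,\frac{n_l}{n},
\end{equation*}
where $n_l=\#\{j:d_j=l\}$, Assumption~\ref{assumption1}\ref{assumption1-1} gives $n_l/n\to\PR(D=l)$ for each fixed $l$, and Assumption~\ref{assumption1}\ref{assumption1-2} gives $\frac{1}{n}\sum_j d_j^3\to\E[D^3]$, so the right-hand side converges to $\E[\mathbbm{1}_{\{D>k\}}D^3]$, which vanishes as $k\to\infty$ by $\E[D^3]<\infty$. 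This also settles the existence of the inner $\lim_{n\to\infty}$ in the lemma statement, which your uniform-integrability reading (which is really a $\limsup_n$ statement) does not immediately give. Second, since the degrees are nonnegative integers, the transfer to $r=1,2$ is even simpler than your $x^r\le 1+x^3$: on $\{d_j>k\}$ with $k\ge 1$ one has $d_j^r\le d_j^3$, so the $r=3$ bound dominates the others directly. Your argument for $d_{\max}^3=o(n)$ is exactly right.
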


\subsection{Estimate of cycle half-edges} The following lemma gives an estimate of the number of cycle half-edges created up to time $t$. This result is proved in \cite{R12} for bounded degrees. In our case, it follows from Lemma~\ref{lem_con_1} as we show below:
\begin{lemma} \label{lem_back_edges}
 For Algorithm~\ref{algo:1}, if $A_k=\big| \mathcal{A}_k \big|$, $ B_k:= \big| \mathcal{B}_k \big|$, and $ C_k:= \big| \mathcal{C}_k \big|$, then
 \begin{equation} \label{lem_back_edges_B}
  \mathbbm{E} \big[ B_k \vert \mathscr{F}_k \big] = (1+o_{\sss \mathbbm{P}}(1))\frac{2A_k}{U_k}  + O_{\sss \mathbbm{P}}(n^{-{2/3}})
 \end{equation} and
 \begin{equation} \label{lem_back_edges_C}
  \mathbbm{E} \big[ C_k \vert \mathscr{F}_k \big] = O_{\sss \mathbbm{P}}(n^{-1})
 \end{equation} uniformly for $k \leq tn^{2/3}$ and any $t >0$, where $\mathscr{F}_k$ is the sigma-field generated by the information revealed up to stage $k$.   Further, all the $O_{\sss\mathbbm{P}} $ and $o_{\sss\mathbbm{P}} $ terms in \eqref{lem_back_edges_B} and \eqref{lem_back_edges_C} can be replaced by $O_{\sss E}$ and $o_{\sss E}$.
 \end{lemma}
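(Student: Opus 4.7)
The plan is to compute both conditional expectations exactly using the uniform random matching property of the configuration model, integrate out the random degree $d_{(k+1)}$ via the size-biased structure of the exploration, and then apply Lemma~\ref{lem_con_1} to reach the claimed asymptotic forms.

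At stage $k+1$, once the partner $b\in\mathcal{S}_k$ of $a\in\mathcal{A}_k$ has been revealed and the new vertex $w$ with $r := d_{(k+1)}-1$ additional half-edges $b_{w1},\dots,b_{wr}$ identified, the residual matching on the remaining $A_k+U_k-2$ unpaired half-edges is still uniformly random. Thus, conditional on $d_{(k+1)}$ and $\mathscr{F}_k$, for each $i$ the partner of $b_{wi}$ lies in $\mathcal{A}_k\setminus\{a\}$ with probability $(A_k-1)/(A_k+U_k-3)$ and equals a specified other $b_{wj}$ with probability $1/(A_k+U_k-3)$. Summing over the relevant indices yields the exact formulas
\begin{equation*}
\E[B_k\mid d_{(k+1)},\mathscr{F}_k] = \frac{2(d_{(k+1)}-1)(A_k-1)}{A_k+U_k-3}, \qquad \E[C_k\mid d_{(k+1)},\mathscr{F}_k] = \frac{(d_{(k+1)}-1)(d_{(k+1)}-2)}{A_k+U_k-3}.
\end{equation*}
Since the next vertex is selected in a size-biased way by degree, $\E[f(d_{(k+1)})\mid\mathscr{F}_k] = U_k^{-1}\sum_{v\notin\mathscr{V}_k} d_v f(d_v)$ for any $f$; integrating the above with $f(d)=d-1$ and $f(d)=(d-1)(d-2)$ respectively gives
\begin{equation*}
\E[B_k\mid\mathscr{F}_k] = \frac{2(A_k-1)}{A_k+U_k-3}\cdot\frac{\sum_{v\notin\mathscr{V}_k}d_v(d_v-1)}{U_k},\qquad \E[C_k\mid\mathscr{F}_k] = \frac{\sum_{v\notin\mathscr{V}_k}d_v(d_v-1)(d_v-2)}{U_k(A_k+U_k-3)}.
\end{equation*}

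For $k\leq tn^{2/3}$, Lemma~\ref{lem_con_1} together with Assumption~\ref{assumption1} gives $U_k = \ell_n - \sum_{i=1}^k d_{(i)} = n\mu + \oP(n)$ and $\sum_{v\notin\mathscr{V}_k} d_v(d_v-1) = \ell_n\nu_n - \sum_{i=1}^k d_{(i)}(d_{(i)}-1) = n\mu + \oP(n)$ uniformly (using $\nu_n\to 1$ from Assumption~\ref{assumption1}\ref{assumption1-3} and the subtracted partial sum being $\oP(n)$). The crude deterministic bound $A_k \leq \sum_{i=1}^k (d_{(i)}-1)$ and Lemma~\ref{lem_con_1} together yield $A_k = \OP(n^{2/3})$ uniformly, from which an elementary expansion gives $(A_k-1)/(A_k+U_k-3) = A_k/U_k + \OP(n^{-2/3})$. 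Combining,
\begin{equation*}
\E[B_k\mid\mathscr{F}_k] = (1+\oP(1))\cdot\frac{2A_k}{U_k} + \OP(n^{-2/3}),
\end{equation*}
which is \eqref{lem_back_edges_B}. For $C_k$, the numerator is bounded deterministically by $\sum_v d_v^3 = n\sigma_3(1+o(1)) = O(n)$ under Assumption~\ref{assumption1}\ref{assumption1-2}, while $U_k(A_k+U_k-3) = \thetaP(n^2)$, immediately giving $\E[C_k\mid\mathscr{F}_k] = \OP(n^{-1})$, i.e.\ \eqref{lem_back_edges_C}.

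The $O_E$, $o_E$ upgrade rests on the deterministic envelopes $U_k \geq \ell_n - k\,d_{\max} \geq n\mu(1-o(1))$ (using $d_{\max} = o(n^{1/3})$ from Lemma~\ref{lem_d_max}) and $\sum_{v\notin\mathscr{V}_k} d_v^r \leq n\sigma_r(1+o(1))$ for $r\leq 3$; these give non-random envelopes on every ratio appearing above. With these in place, each in-probability estimate transfers to $L^1$ once Lemma~\ref{lem_con_1} itself is known in $L^1$ --- which follows from the $L^1$-version of Lemma~\ref{lem_general} under the finite third moment assumption. The main obstacle is precisely this uniform-in-$k$ $L^1$ upgrade of Lemma~\ref{lem_con_1}: it requires uniform integrability of the size-biased partial sums $\sum_{i\leq k} d_{(i)}$ and $\sum_{i\leq k} d_{(i)}^2$ across the whole window $k\leq tn^{2/3}$, which is where the $\E[D^3]<\infty$ hypothesis is essential.
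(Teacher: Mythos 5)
Your proof follows the same route as the paper's: compute the exact conditional expectation given $v_{\sss(k+1)}$, integrate out the size-biased choice of vertex, and invoke Lemma~\ref{lem_con_1} for the uniform asymptotics. Two remarks are in order, one substantive and one about framing.

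Your starting claim that ``the residual matching on the remaining $A_k+U_k-2$ unpaired half-edges is still uniformly random'' is incorrect, and this is why your denominator is $A_k+U_k-3$ rather than the paper's $U_k-1$. By construction, Algorithm~\ref{algo:1} moves any pair of paired active half-edges into $\mathcal{B}$ as soon as the second one is activated, so deterministically no two elements of $\mathcal{A}_k$ are ever paired; this fact is $\mathscr{F}_k$-measurable. Hence, conditional on $\mathscr{F}_k$ and the new vertex, the residual matching is uniform \emph{subject to} the constraint that $\mathcal{A}_k\setminus\{a\}$ is an independent set. Under that constraint, a given active half-edge is paired to a uniform element of $\mathcal{S}_k\setminus\{b\}$, giving probability $1/(U_k-1)$ of hitting a half-edge of $v_{\sss(k+1)}$, which is the formula \eqref{lem_back_edges_eqn2} in the paper. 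Since $A_k=\OP(n^{2/3})$ so that $A_k/U_k=\oP(1)$, your (uncorrected) formula differs from the true one only by a multiplicative $1+\OP(n^{-1/3})$ factor and still yields the claimed $O_{\sss\PR}(n^{-2/3})$ error, so the conclusion is unaffected; but the matching distribution is genuinely conditioned, and identifying that constraint is the one conceptual ingredient your write-up omits.

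On the $O_{\sss E}/o_{\sss E}$ upgrade you set up the right ingredients --- the deterministic envelope $U_k\geq \ell_n-kd_{\max}=\ell_n(1-o(1))$ and $\sum_{j\notin\mathscr{V}_k}d_j^r\leq\sum_{j\in[n]}d_j^r$ --- but then assert that the ``main obstacle'' is proving an $L^1$-version of Lemma~\ref{lem_con_1}, which is heavier than what is needed. Once the envelopes give a deterministic bound on the relevant ratios (in particular $\E[d_{\sss(k+1)}-1\mid\mathscr{F}_k]\in[0,1+o(1)]$ deterministically uniformly over $k\leq tn^{2/3}$), the already-proved in-probability convergence in Lemma~\ref{lem_con_1} together with dominated convergence upgrades the supremum over $k$ to convergence in mean; no fresh $L^1$ version of Lemma~\ref{lem_general} is required. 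This is what the paper's terse remark ``follows from $\sum_{j\in[n]}d_j^r-kd_{\max}^r\leq\sum_{j\notin\mathscr{V}_k}d_j^r\leq\sum_{j\in[n]}d_j^r$, together with $d_{\max}=o(n^{1/3})$'' is pointing at.
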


 \begin{proof}
  Suppose $ U_k:= \big| \mathcal{S}_k \big|$. First note that by \eqref{lem_eq2}
  \begin{equation}
   \frac{U_k}{n}= \frac{1}{n} \sum_{j \in [n]} d_j - \frac{1}{n} \sum_{j=1}^{k} d_{\sss (j)} = \mathbbm{E} [ D ] + o_{\sss \mathbbm{P}}(1)
  \end{equation} uniformly over $k \leq tn^{2/3}$. 
  Let $a$ be the half-edge that is being explored at stage $k+1$.  
  Now, each of the $(A_k-1)$ half-edges of $\mathcal{A}_k \setminus \{ a \}$ is equally likely to be paired with a half-edge of $v_{\sss (k+1)}$, thus creating two elements of $\mathcal{B}_k$. Also, given $\mathscr{F}_k$ and $v_{\sss (k+1)}$, the probability that a half-edge of $\mathcal{A}_k \setminus \{ a \}$ is paired to one of the half-edges of $v_{\sss (k+1)}$ is $(d_{\sss(k+1)}-1)/(U_{k}-1)$. Therefore,
 \begin{equation}\label{lem_back_edges_eqn2}
 \begin{split}
 \mathbbm{E} \big[ B_k \vert \mathscr{F}_k, v_{\sss(k+1)} \big] = 2(A_k-1)\frac{d_{\sss(k+1)}-1}{U_{k}-1}= 2 \big( d_{\sss(k+1)} -1 \big) \frac{A_k}{U_k-1} - 2 \frac{d_{\sss(k+1)} -1 }{U_k-1}.
 \end{split}
 \end{equation}
 Hence,
 \begin{equation} \label{lem_back_edges_eqn1}
  \mathbbm{E} \big[ B_k \vert \mathscr{F}_k \big] = 2 \mathbbm{E}\big[ d_{\sss(k+1)} -1 \vert \mathscr{F}_k \big] \frac{A_k}{U_k-1} - 2 \frac{\mathbbm{E}\big[ d_{\sss (k+1)} -1 \vert \mathscr{F}_k \big] }{U_k-1}.
 \end{equation}
 Now, using \eqref{lem_eq2} and \eqref{lem_eq1},
  \begin{equation}
 \mathbbm{E}\big[ d_{\sss(k+1)} -1 \vert \mathscr{F}_k \big] = \frac{\sum_{j \notin \mathscr{V}_k} d_j (d_j-1)}{\sum_{j \notin \mathscr{V}_k} d_j}  = \frac{\sum_{j \in [n]}d_j^2}{\sum_{j \in [n]}d_j}-1+o_{\sss\mathbbm{P}}(1) = 1+o_{\sss\mathbbm{P}}(1).
 \end{equation} uniformly over $k \leq tn^{2/3}$, where the last step follows from Assumption~\ref{assumption1}~\ref{assumption1-3}. 
 Further, using the fact $\PR(D=1)>0$, $U_k\geq c_0n $ for some constant $c_0>0$ uniformly over $k\leq tn^{2/3}$.
 Thus, \eqref{lem_back_edges_eqn1} gives \eqref{lem_back_edges_B}.  The fact that all the $O_{\sss\mathbbm{P}}$, $o_{\sss\mathbbm{P}}$ can be replaced by $O_{\sss E}$, $o_{\sss E}$ follows from $\sum_{j\in [n]}d_j^r-kd_{\max}^r \leq \sum_{j\notin\mathscr{V}_k}d_j^r\leq  \sum_{j\in [n]}d_j^r$ for $r=1,2$, together with $d_{\max}=o(n^{1/3})$. To prove \eqref{lem_back_edges_C}, note that
 \begin{equation}
  \mathbbm{E} \big[ C_k \vert \mathscr{F}_k, v_{\sss (k+1)} \big] = 2(d_{\sss (k+1)}-2)\frac{d_{\sss (k+1)}-1}{U_{k}-1}.
 \end{equation}
 By Assumption \ref{assumption1} and \eqref{lem_eq2} 
 \begin{equation}\label{expt-dk2-F-k}\mathbbm{E}[d_{\sss(k+1)}^2|\mathscr{F}_k] = \frac{\sum_{j\notin \mathscr{V}_k} d_j^3}{\sum_{j\notin \mathscr{V}_k} d_j} \leq \frac{\sum_{j\in [n]} d_j^3}{\sum_{j\in [n]} d_j+o_{\sss \mathbbm{P}}(n^{2/3})} = O_{\sss\mathbbm{P}}(1),
 \end{equation}uniformly for $k\leq tn^{2/3}$. Therefore,
 \begin{equation}\label{lem_back_edges_eqn3}
 \begin{split}
  \mathbbm{E} \big[ C_k \vert \mathscr{F}_k \big] = O_{\sss\mathbbm{P}}(n^{-1})
  \end{split}
 \end{equation} uniformly over   $k \leq tn^{2/3}$. Again, $O_{\sss\mathbbm{P}}$ term can be replaced by $O_{\sss E}$, as argued before.
 \end{proof}

 \subsection{Key ingredients} 
 For any $\mathbbm{D}[0,\infty)$-valued process $\mathbf{X}_n$ define $\bar{X}_n(u):= n^{-1/3} X_n( \lfloor n^{2/3}u \rfloor )$ and  $ \bar{\mathbf{X}}_{n}:=( \bar{X}_n(u) )_{u \geq 0} $. The following result is the main ingredient for proving Theorem~\ref{thm_main}. Recall the definition of $\mathbf{B}^\lambda_{\mu,\eta}$ from \eqref{def:inhomogen:BM} with parameters given in \eqref{parameter}. 
 \begin{theorem}[Convergence of the exploration process] \label{thm_main1}
 Under Assumption~\ref{assumption1}, as $n \to \infty$,
  \begin{equation}
   \bar{\mathbf{S}}_{n} \xrightarrow{\mathcal{L}} \mathbf{B}^\lambda_{\mu,\eta}
  \end{equation}
 with respect to the $Skorohod$ $J_{1}$ topology.
 \end{theorem}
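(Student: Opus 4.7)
The plan is to set up a semimartingale decomposition of the exploration walk $\mathbf{S}_n$, analyze its martingale and drift parts separately via a functional central limit theorem for martingales, and then argue that the accumulated cycle half-edge correction is asymptotically negligible after rescaling. Specifically, split
\[
S_n(k) \,=\, \tilde{S}_n(k) - 2 B_n(k), \qquad \tilde{S}_n(k) := \sum_{j=1}^{k}(d_{\sss(j)}-2), \quad B_n(k) := \sum_{j=1}^{k} c_{\sss(j)},
\]
so that the task reduces to showing $\bar{\tilde{\mathbf{S}}}_n \dto \mathbf{B}^\lambda_{\mu,\eta}$ in $J_1$ together with $n^{-1/3} B_n(\lfloor n^{2/3}\,\cdot\,\rfloor) \pto 0$ uniformly on compact time intervals.

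For the first piece, apply Doob's decomposition $\tilde{S}_n = M_n + D_n$. The size-biased exploration yields
\[
\mathbb{E}[d_{\sss(j)} - 2 \mid \mathscr{F}_{j-1}] \,=\, \frac{\ell_n(\nu_n-1) - \sum_{i<j} d_{\sss(i)}(d_{\sss(i)}-2)}{\ell_n - \sum_{i<j} d_{\sss(i)}},
\]
via the algebraic identity $\sum_{i \in [n]} d_i(d_i-2) = \ell_n(\nu_n - 1)$. Expanding $\nu_n = 1 + \lambda n^{-1/3} + o(n^{-1/3})$ from Assumption~\ref{assumption1}~\ref{assumption1-3} and applying Lemma~\ref{lem_con_1} to estimate the partial sums of $d_{\sss(i)}$ and $d_{\sss(i)}^2$, one obtains, for $j = \lfloor n^{2/3} u\rfloor$ with $u \in [0,t]$,
\[
\mathbb{E}[d_{\sss(j)} - 2 \mid \mathscr{F}_{j-1}] \,=\, n^{-1/3}\bigl(\lambda - (\eta/\mu^3)\, u\bigr) + o_{\sss\mathbbm{P}}(n^{-1/3}),
\]
where the coefficient $(\sigma_3 - 2\sigma_2)/\mu^2 = \eta/\mu^3$ emerges via the criticality relation $\sigma_2 = 2\mu$ (equivalent to $\nu = 1$). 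Summing up to $k = \lfloor n^{2/3} s\rfloor$ gives $\bar{D}_n(s) \pto \lambda s - \eta s^2/(2\mu^3)$ uniformly in $s \in [0,t]$. For the martingale part, $\mathrm{Var}(d_{\sss(j)} \mid \mathscr{F}_{j-1}) \pto \sigma_3/\mu - 4 = \eta/\mu^2$ by Assumption~\ref{assumption1}~\ref{assumption1-2} together with the crude estimate $\sum_{i < j} d_{\sss(i)}^3 \leq d_{\max} \sum_{i < j} d_{\sss(i)}^2 = o(n)$ coming from Lemma~\ref{lem_d_max}. The predictable quadratic variation of $\bar{M}_n$ at time $s$ thus converges to $(\eta/\mu^2)\,s$; the conditional Lindeberg condition is immediate since individual jumps are bounded by $d_{\max} + O(1) = o(n^{1/3})$. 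A standard martingale FCLT (e.g.\ Whitt or Ethier--Kurtz) delivers $\bar{M}_n \dto (\sqrt{\eta}/\mu)\, \mathbf{B}$ in $J_1$, and combined with the drift this gives $\bar{\tilde{\mathbf{S}}}_n \dto \mathbf{B}^\lambda_{\mu,\eta}$.

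The main technical obstacle lies in the second piece. Lemma~\ref{lem_back_edges} bounds the one-step compensator of $B_n$ by a constant times $A_j/U_j$ plus $O_{\sss\mathbbm{P}}(n^{-2/3})$, so showing the accumulated correction is $o_{\sss\mathbbm{P}}(n^{1/3})$ requires $\sup_{k \leq tn^{2/3}} A_k = O_{\sss\mathbbm{P}}(n^{1/3})$. This appears circular, because such uniform control would naively follow from the $J_1$ convergence of $\bar{\mathbf{S}}_n$ itself. The resolution is the pathwise a priori inequality
\[
A_k \,=\, S_n(k) - \min_{j \leq k} S_n(j) \,\leq\, \tilde{S}_n(k) - \min_{j \leq k} \tilde{S}_n(j),
\]
valid since $B_n$ is non-decreasing, whence $\min_{j \leq k}(\tilde{S}_n(j) - 2 B_n(j)) \geq \min_{j \leq k} \tilde{S}_n(j) - 2 B_n(k)$. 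Because $\bar{\tilde{\mathbf{S}}}_n$ has already been shown to converge in $J_1$, the right-hand side is $O_{\sss\mathbbm{P}}(n^{1/3})$ uniformly on $[0, tn^{2/3}]$, hence $\sup_{k \leq tn^{2/3}} A_k = O_{\sss\mathbbm{P}}(n^{1/3})$. Feeding back into Lemma~\ref{lem_back_edges}, the compensator of $B_n(\lfloor n^{2/3} t\rfloor)$ is $O_{\sss\mathbbm{P}}(1)$, and a Lenglart/Doob concentration step transfers this to $B_n(\lfloor n^{2/3} t\rfloor) = O_{\sss\mathbbm{P}}(1)$. Thus $n^{-1/3} B_n(\lfloor n^{2/3}\,\cdot\,\rfloor) = o_{\sss\mathbbm{P}}(1)$ uniformly on compacts, and combining with the first piece yields $\bar{\mathbf{S}}_n \dto \mathbf{B}^\lambda_{\mu,\eta}$ in the Skorohod $J_1$ topology.
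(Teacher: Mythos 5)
Your proof is correct and follows essentially the same route as the paper: you prove first that the simplified walk $\bar{\tilde{\mathbf{S}}}_n$ converges to $\mathbf{B}^\lambda_{\mu,\eta}$ via a Doob decomposition and a martingale FCLT (the paper's Proposition~\ref{thm_main2}, with the same drift/variance asymptotics up to minor algebraic rewrites), and then control the accumulated cycle half-edges via the pathwise bound $A_k \le \tilde{S}_n(k) - \min_{j\le k}\tilde{S}_n(j)$, which is exactly the inequality driving the paper's Lemma~\ref{cor_c_k}. The only difference is presentational: the paper states this last step as a Lemma on the event $E_n(t,M)$ and defers the conclusion to the literature, whereas you spell out the non-circularity and the Lenglart/Markov step explicitly.
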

  As in \cite{Jo10}, we will prove this by approximating  $\mathbf{S}_{n}$ by a simpler process defined as
\begin{equation} \label{walk2} 
  s_{n}(0)=0, \quad s_{n}(i)=\sum_{j=1}^{i}(d_{\sss(j)} -2).
 \end{equation}
 Note that the difference between the processes $\mathbf{S}_n$ and $\mathbf{s}_n$ is due to the cycles, loops, and multiple-edges encountered during the exploration.
Following the approach of \cite{Jo10}, it will be enough to prove the following:
 \begin{proposition} \label{thm_main2} Under Assumption~\ref{assumption1}, as $n \to \infty$,
  \begin{equation}
   \bar{\mathbf{s}}_{n} \xrightarrow{\mathcal{L}} \mathbf{B}^\lambda_{\mu,\eta}
  \end{equation}
  with respect to  the $Skorohod$ $J_{1}$ topology.
\end{proposition}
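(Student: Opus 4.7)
The plan is to use the standard Doob--Meyer decomposition of $\mathbf{s}_n$ with respect to the filtration $(\mathscr{F}_j)$ of the size-biased exploration and then apply a martingale functional central limit theorem. Write
\begin{equation}
s_n(i) = M_n(i) + A_n(i), \qquad M_n(i) := \sum_{j=1}^{i}\bigl(d_{\sss(j)} - \mathbb{E}[d_{\sss(j)}\mid\mathscr{F}_{j-1}]\bigr), \qquad A_n(i) := \sum_{j=1}^{i}\bigl(\mathbb{E}[d_{\sss(j)}\mid\mathscr{F}_{j-1}] - 2\bigr),
\end{equation}
and aim to show separately that the rescaled drift $\bar{A}_n$ converges to the parabola $\lambda s - \eta s^2/(2\mu^3)$ while the rescaled martingale $\bar{M}_n$ converges to $(\sqrt{\eta}/\mu)\mathbf{B}$.

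For the drift, the key identity is $\mathbb{E}[d_{\sss(j)}\mid\mathscr{F}_{j-1}] = (\sum_{k\notin\mathscr{V}_{j-1}}d_k^2)/(\sum_{k\notin\mathscr{V}_{j-1}}d_k)$. Using Lemma~\ref{lem_con_1} uniformly for $j\le tn^{2/3}$, the numerator equals $\sigma_2^{(n)}n - (\sigma_3/\mu)n^{2/3}u + o_{\sss\PR}(n^{2/3})$ and the denominator equals $\mu_n n - (\sigma_2/\mu)n^{2/3}u + o_{\sss\PR}(n^{2/3})$, where $u=j/n^{2/3}$. A first order expansion together with $\sigma_2^{(n)}/\mu_n = \nu_n+1 = 2 + \lambda n^{-1/3} + o(n^{-1/3})$ (Assumption~\ref{assumption1}\ref{assumption1-3}) gives
\begin{equation}
\mathbb{E}[d_{\sss(j)}\mid\mathscr{F}_{j-1}] - 2 = \lambda n^{-1/3} - \frac{(j/n^{2/3})\,\eta}{\mu^3\,n^{1/3}} + o_{\sss\PR}(n^{-1/3}),
\end{equation}
uniformly in $j\le tn^{2/3}$. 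Summing this and multiplying by $n^{-1/3}$ turns the first term into $\lambda u$ and the second into a Riemann sum that converges uniformly to $\eta u^2/(2\mu^3)$, so $\bar{A}_n(u) \pto \lambda u - \eta u^2/(2\mu^3)$ uniformly on compact intervals.

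For the martingale part I would apply the martingale FCLT (e.g.\ Whitt's or Jacod--Shiryaev's version). The two hypotheses are: (a) convergence of the predictable quadratic variation, and (b) a Lindeberg jump condition. For (a), compute
\begin{equation}
\mathrm{Var}(d_{\sss(j)}\mid\mathscr{F}_{j-1}) = \frac{\sum_{k\notin\mathscr{V}_{j-1}}d_k^3}{\sum_{k\notin\mathscr{V}_{j-1}}d_k} - \Bigl(\mathbb{E}[d_{\sss(j)}\mid\mathscr{F}_{j-1}]\Bigr)^2 = \frac{\sigma_3}{\mu} - \Bigl(\frac{\sigma_2}{\mu}\Bigr)^2 + o_{\sss\PR}(1) = \frac{\eta}{\mu^2} + o_{\sss\PR}(1),
\end{equation}
again using Lemma~\ref{lem_con_1} for both ratios, so $n^{-2/3}\langle M_n\rangle_{\lfloor n^{2/3}u\rfloor}\pto (\eta/\mu^2)u$. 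For (b), each increment of $\bar{M}_n$ is bounded by $n^{-1/3}d_{\max}$, and Lemma~\ref{lem_d_max} gives $d_{\max}=o(n^{1/3})$, which trivially implies the Lindeberg condition. Hence $\bar{M}_n \xrightarrow{\mathcal{L}}(\sqrt{\eta}/\mu)\mathbf{B}$ in the Skorohod $J_1$ topology.

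Combining the deterministic-limit convergence of $\bar{A}_n$ with the distributional convergence of $\bar{M}_n$ via Slutsky yields the claim. The main obstacle I anticipate is not the martingale FCLT itself, which is standard once one has bounded jumps and convergent quadratic variation, but rather the careful quantitative expansion of $\mathbb{E}[d_{\sss(j)}\mid\mathscr{F}_{j-1}]-2$: one needs to extract the $\lambda n^{-1/3}$ term from the finite-$n$ discrepancy in $\nu_n$ while simultaneously capturing the $-u\eta/(\mu^3 n^{1/3})$ term coming from depletion of the unexplored half-edges. This requires a second-order expansion of the ratio and tight uniform control of the $o_{\sss\PR}(n^{2/3})$ remainders in Lemma~\ref{lem_con_1}, which must combine with the small coefficient $n^{-1/3}$ without inflating the error.
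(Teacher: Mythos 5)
Your proposal is correct and takes essentially the same approach as the paper: Doob--Meyer decomposition, verification of drift and quadratic-variation limits via Lemma~\ref{lem_con_1}, and a martingale FCLT with the jump condition handled by $d_{\max}=o(n^{1/3})$. The only cosmetic difference is in the drift computation: you expand the ratio $\mathbb{E}[d_{\sss(j)}\mid\mathscr{F}_{j-1}]$ pointwise in $j$ and then Riemann-sum, while the paper first uses the algebraic split in \eqref{the-split-up} to write $A_n(k)$ as an explicit sum of three pieces and controls the resulting double sums directly via Lemma~\ref{lem_con_2}; both routes rely on the same uniform size-biased estimates and yield the same parabola.
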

 \begin{remark} \label{remark_cont_cadlag} \normalfont It will be shown that the distributions of $\bar{\mathbf{S}}_n$ and $\bar{\mathbf{s}}_n$ are very close as $n\to\infty$, and therefore, Proposition~\ref{thm_main2} implies Theorem \ref{thm_main1}. This is achieved by proving that we will not see \emph{too many} cycle half-edges up to the time $\lfloor n^{2/3}u\rfloor$ for any  fixed $u >0$.
 \end{remark}
From here onwards we will look at the continuous versions of the processes $\bar{\mathbf{S}}_{n}$ and $\bar{\mathbf{s}}_{n}$ by linearly interpolating between the values at the jump points and write it using the same notation. It is easy to see that these continuous versions differ from their c\`adl\`ag versions by at most $n^{-1/3}d_{\max}=o(1)$ uniformly on $[0,T]$, for any $T>0$. Therefore, the convergence in law of the continuous versions implies the convergence in law of the c\`adl\`ag versions and vice versa.
Before proceeding to show that Theorem~\ref{thm_main1} is a consequences of Proposition~\ref{thm_main2}, we will need to bound the difference of these two processes in a suitable way. We need the following lemma.  Recall the definition of $c_{\sss (k+1)}:= (B_k+C_k)/2$ from \eqref{name:c-B-A}.
\begin{lemma} \label{cor_c_k}
 Fix $t >0$ and $M>0$ (large). Define
 $E_{n}(t,M):=\big\{\max_{s\leq t}\{\bar{s}_{n}(s)-\min_{u\leq s} \bar{s}_{n}(u)\} < M \big\}.$ Then
 \begin{equation} \label{cor_c_k_eqn}
  \limsup\limits_{n \to \infty} \sum_{k \leq tn^{2/3}} \mathbbm{E} \big[ c_{\sss(k)} \mathbbm{1}_{E_{n}(t,M)} \big] < \infty.
 \end{equation}
 \end{lemma}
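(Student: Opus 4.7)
The plan is to set up a discrete Gronwall inequality for the cumulative cycle count $C_k := \sum_{j \leq k} c_{(j)}$, using Lemma~\ref{lem_back_edges} as the one-step driver. The first ingredient is a deterministic pathwise bound for the active half-edge count $A_k$ in terms of the reflected version of $\mathbf{s}_n$. Since $S_n(k) = s_n(k) - 2 C_k$ and the recursion for $A_k$ gives $A_k = S_n(k) + 2 N_k$, where $N_k \leq 1 + \lceil -\min_{j \leq k} S_n(j)/2\rceil$ is the number of components started by step $k$, one obtains
\begin{equation*}
 A_k \;\leq\; S_n(k) - \min_{j \leq k} S_n(j) + 2 \;\leq\; \bigl(s_n(k) - \min_{j \leq k} s_n(j)\bigr) + 2 C_k + 2,
\end{equation*}
the second inequality using $C_j \leq C_k$ for $j \leq k$.

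Next, I would introduce the stopping time $\tau := \inf\{k : s_n(k) - \min_{j \leq k} s_n(j) \geq (M+1) n^{1/3}\}$. Because the jumps of $\mathbf{s}_n$ are bounded by $d_{\max} = o(n^{1/3})$ (Lemma~\ref{lem_d_max}), the continuous linear interpolation differs from the discrete skeleton by at most $d_{\max}$, and hence the event $E_n(t,M)$ is contained in $\{\tau > \lfloor tn^{2/3}\rfloor\}$ for all sufficiently large $n$. Since $\{k < \tau\}\in \mathscr{F}_k$, the pointwise bound above then reads $A_k \mathbbm{1}_{\{k<\tau\}} \leq (M+1) n^{1/3} + 2 C_{k\wedge\tau} + 2$. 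Plugging this into Lemma~\ref{lem_back_edges} and using $U_k \geq c_0 n$ uniformly for $k \leq tn^{2/3}$ (from Assumption~\ref{assumption1}~\ref{assumption1-4}, as already observed in the proof of Lemma~\ref{lem_back_edges}) yields, for some constants $C_1 = C_1(M,t,c_0)$ and $C_2 = C_2(t,c_0)$ independent of $n$,
\begin{equation*}
 \mathbbm{E}\bigl[c_{(k+1)} \mathbbm{1}_{\{k < \tau\}} \,\big|\, \mathscr{F}_k\bigr] \;\leq\; \frac{C_1}{n^{2/3}} \;+\; \frac{C_2}{n}\, C_{k \wedge \tau},
\end{equation*}
where the $(1+o_{\sss E}(1))$ factor and the $O_{\sss E}(n^{-2/3})$ error of Lemma~\ref{lem_back_edges} are both absorbed into the $C_1/n^{2/3}$ term after taking expectations.

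Setting $a_k := \mathbbm{E}[C_{k\wedge\tau}]$ and taking expectations in the display above gives the discrete Gronwall recursion $a_{k+1} \leq (1 + C_2/n)\, a_k + C_1 n^{-2/3}$, which iterates to
\begin{equation*}
 a_{\lfloor tn^{2/3}\rfloor} \;\leq\; e^{C_2 t\, n^{-1/3}} \cdot C_1 t \;\longrightarrow\; C_1 t.
\end{equation*}
Since $\sum_{k \leq tn^{2/3}} \mathbbm{E}[c_{(k)} \mathbbm{1}_{E_n(t,M)}] \leq \mathbbm{E}[C_{\lfloor tn^{2/3}\rfloor \wedge \tau}] = a_{\lfloor tn^{2/3}\rfloor}$, the claimed uniform bound follows. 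The main obstacle is the feedback loop: the very quantity one wishes to control, $C_k$, reappears on the right-hand side of the bound for $A_k$, precisely because $\mathbf{s}_n$ and $\mathbf{S}_n$ differ by $2C_k$. That is why the stopping time must be expressed in terms of $\mathbf{s}_n$ (the process whose reflected supremum is controlled on $E_n(t,M)$) rather than the more natural $\mathbf{S}_n$ (whose reflection directly equals $A_k$ up to an additive $O(1)$), and why the estimate has to close via a Gronwall loop rather than a single pass.
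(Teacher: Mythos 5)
Your proof is correct, and the Gronwall-type iteration you set up does close the argument cleanly. However, it is a detour: the feedback loop you identify as "the main obstacle" is in fact absent, and the paper's proof exploits precisely this. The key observation is that the $2C_k$ terms cancel \emph{exactly}, so that $A_k$ is dominated pointwise by the reflected $s_n$-process with no residual $C_k$ term. Concretely, from $S_n(j)=s_n(j)-2C_j$ and $C_j\le C_k$ for $j\le k$, picking the minimizer $j^*$ of $S_n$ over $\{0,\dots,k\}$ gives
\begin{equation*}
\min_{j\le k}s_n(j)\;\le\;s_n(j^*)\;=\;S_n(j^*)+2C_{j^*}\;\le\;\min_{j\le k}S_n(j)+2C_k,
\end{equation*}
so that
\begin{equation*}
S_n(k)-\min_{j\le k}S_n(j)\;=\;s_n(k)-2C_k-\min_{j\le k}S_n(j)\;\le\;s_n(k)-\min_{j\le k}s_n(j),
\end{equation*}
and hence $A_k\le Mn^{1/3}$ on $E_n(t,M)$ directly (up to the same $O(1)$ additive correction you track). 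Your second inequality, $S_n(k)-\min_{j\le k}S_n(j)\le \bigl(s_n(k)-\min_{j\le k}s_n(j)\bigr)+2C_k$, is true but lossy — it discards the $-2C_k$ coming from the substitution $S_n(k)=s_n(k)-2C_k$ while keeping the $+2C_k$ from bounding the minimum, and that is what re-introduces $C_k$ on the right. Once the cancellation is used, a single application of Lemma~\ref{lem_back_edges} yields $\mathbb{E}[c_{(k)}\mathbbm{1}_{E_n(t,M)}]\le Mn^{-2/3}/\mu+o(n^{-2/3})$ uniformly over $k\le tn^{2/3}$, and summing gives the claim without any stopping time or discrete Gronwall recursion. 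So your approach buys nothing over the paper's; it is simply a less sharp version of the same pathwise bound, with the slack repaired by an extra iteration. Worth keeping in mind, though, is that your mechanism (stopping at the first crossing of the reflected $s_n$-process plus Gronwall) is robust and would survive settings where an exact cancellation is unavailable.
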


 \begin{proof} Lemma~\ref{cor_c_k} is similar to \cite[Lemma 6.1]{Jo10}. We add a brief proof here.  Note that, for all large $n$, $A_k \leq Mn^{1/3}$ on $E_{n}(t,M)$, because
  \begin{equation}
   A_k = S_n(k)-\min\limits_{j \leq k} S_n(j)= s_n(k) - 2 \sum_{j=1}^{k} c_{\sss(j)} - \min\limits_{j \leq k} S_n(j) \leq s_n(k)-\min\limits_{j \leq k} s_n(j),
  \end{equation} where the last step follows by noting that $\min_{j \leq k} s_n(j) \leq \min_{j \leq k} S_n(j) + 2 \sum_{j=1}^{k} c_{\sss(j)}$.  By Lemma~\ref{lem_back_edges},
  \begin{equation}
  \mathbbm{E} \big[ c_{\sss(k)} \mathbbm{1}_{E_{n}(t,M)} \big]  \leq \frac{Mn^{1/3}}{\mu n} + o(n^{-2/3})= \frac{M}{\mu}n^{-2/3} + o(n^{-2/3})
  \end{equation} uniformly for $k \leq tn^{2/3}$. 
  Summing over $1 \leq k \leq tn^{2/3}$ and taking the $\limsup$ completes the proof.
 \end{proof}
The proof of the fact that Theorem~\ref{thm_main1} follows from Proposition~\ref{thm_main2} and Lemma~\ref{cor_c_k} is standard (see \cite[Section 6.2]{Jo10}) and we skip the proof for the sake of brevity. 
  From here onward the main focus of this section will be to prove Proposition~\ref{thm_main2}. We use the martingale functional central limit theorem in a similar manner as~\cite{A97}.
  \begin{proof}[Proof of Proposition~\ref{thm_main2}] Let $ \{\mathscr{F}_{i} \}_{i \geq 1} $ be the natural filtration defined in Lemma \ref{lem_back_edges}. Recall the definition of $s_n(i)$ from \eqref{walk2}. By the Doob-Meyer decomposition \cite[Theorem 4.10]{KS91} we can write
  \begin{equation}\label{split_up}
    s_{n}(i) = M_{n}(i)+A_{n}(i),\quad s_{n}^{2}(i) = H_{n}(i)+B_{n}(i),
  \end{equation}
where
 \begin{subequations}
  \begin{equation} \label{defn_martingale}
   M_{n}(i)= \sum_{j=1}^{i} \big(d_{\sss(j)}-\mathbbm{E} \big[ d_{\sss(j)}\vert \mathscr{F}_{j-1} \big] \big),
  \end{equation}
  \begin{equation}
   A_{n}(i)= \sum_{j=1}^{i} \mathbbm{E}\big[d_{\sss(j)}-2 \vert \mathscr{F}_{j-1} \big],
  \end{equation}
  \begin{equation}
   B_{n}(i)= \sum_{j=1}^{i} \big( \mathbbm{E} \big[ d_{\sss(j)}^{2}\vert \mathscr{F}_{j-1} \big]-\mathbbm{E}^{2} \big[ d_{\sss(j)}\vert \mathscr{F}_{j-1} \big] \big).
  \end{equation}
 \end{subequations}

Recall that for a discrete time stochastic process $(X_n(i))_{i\geq 1}$, we denote $\bar{X}_n(t)=n^{-1/3}X_n(\lfloor tn^{2/3}\rfloor )$. 
Our result follows from the martingale functional central limit theorem  \cite[Theorem 2.1]{W07} if we can prove the following four conditions: For any $u >0$,
\begin{subequations}
 \begin{equation}\label{condition1}
  \sup_{s\leq u}\big| \bar{A}_{n}(s)-\lambda s+\frac{\eta s^2}{2\mu^{3}}\big| \xrightarrow{\mathbbm{P}} 0,
 \end{equation}
 \begin{equation} \label{condition2}
  n^{-1/3}\bar{B}_{n}(u)\xrightarrow{\mathbbm{P}} \frac{\eta}{\mu^{2}} u,
 \end{equation}
 \begin{equation} \label{condition3}
  \mathbbm{E}\big[\sup_{s\leq u}\big| \bar{M}_{n}(s)-\bar{M}_{n}(s-)\big| ^{2}\big] \to 0,
 \end{equation}
 and
 \begin{equation} \label{condition4}
  n^{-1/3}\mathbbm{E}\big[\sup_{s\leq u}\vert \bar{B}_{n}(s)-\bar{B}_{n}(s-)\vert\big] \to 0.
 \end{equation}
\end{subequations}
\par Indeed \eqref{condition1} gives rise to the quadratic drift term of the limiting distribution. Conditions \eqref{condition2}, \eqref{condition3}, \eqref{condition4} are the same as \cite[Theorem 2.1, Condition (ii)]{W07}. The facts that the jumps of both the martingale and the quadratic-variation process go to zero and that the quadratic variation process is converging to the quadratic variation of an inhomogeneous Brownian Motion, together imply the convergence of the martingale term. The validation of these conditions are given separately in the subsequent part of this section.
\end{proof}

\begin{lemma} The conditions \eqref{condition2}, \eqref{condition3}, and \eqref{condition4} hold.
\end{lemma}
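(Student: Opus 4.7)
The plan is to verify the three conditions separately, using the key deterministic facts that (a) size-biased partial sums of $d_{(j)}^r$ up to time $\lfloor un^{2/3}\rfloor$ are $O_{\sss\PR}(n^{2/3})$ with the right ratio (Lemma~\ref{lem_con_1}), and (b) $d_{\max}=o(n^{1/3})$ (Lemma~\ref{lem_d_max}). I will write the size-biased conditional moments explicitly as
\[
\mathbbm{E}\big[d_{\sss(j)}^r \mid \mathscr{F}_{j-1}\big]=\frac{\sum_{k\notin\mathscr{V}_{j-1}}d_k^{r+1}}{\sum_{k\notin\mathscr{V}_{j-1}}d_k}
\]
and show these are close to $\sigma_{r+1}/\mu$ uniformly over $j\leq un^{2/3}$.

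For \eqref{condition2}, the numerator and denominator above differ from $\sum_{k\in[n]}d_k^{r+1}$ and $\sum_{k\in[n]}d_k$ respectively by $\sum_{i=1}^{j-1}d_{\sss(i)}^{r+1}$. Lemma~\ref{lem_con_1} gives $\sum_{i=1}^{j-1}d_{\sss(i)} = \OP(n^{2/3})$ and $\sum_{i=1}^{j-1}d_{\sss(i)}^2=\OP(n^{2/3})$; for the cubic sum I will bound $\sum_{i=1}^{j-1}d_{\sss(i)}^3\leq d_{\max}\sum_{i=1}^{j-1}d_{\sss(i)}^2=o(n^{1/3})\cdot \OP(n^{2/3})=\oP(n)$. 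Combined with Assumption~\ref{assumption1}\ref{assumption1-2}, this yields
\[
\mathbbm{E}\big[d_{\sss(j)}\mid\mathscr{F}_{j-1}\big]=\frac{\sigma_2}{\mu}+\oP(1), \qquad \mathbbm{E}\big[d_{\sss(j)}^2\mid\mathscr{F}_{j-1}\big]=\frac{\sigma_3}{\mu}+\oP(1),
\]
uniformly for $j\leq un^{2/3}$. Hence the conditional variance tends uniformly to $\sigma_3/\mu-\sigma_2^2/\mu^2=\eta/\mu^2$, and summing over $j\leq \lfloor un^{2/3}\rfloor$ and multiplying by $n^{-2/3}$ gives $n^{-1/3}\bar{B}_n(u)\pto \eta u/\mu^2$.

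For \eqref{condition3} and \eqref{condition4}, the jumps of the discrete processes are at most of size $2d_{\max}$ (for $M_n$) and $d_{\max}^2$ (for $B_n$), since $|d_{\sss(j)}-\mathbbm{E}[d_{\sss(j)}\mid\mathscr{F}_{j-1}]|\leq 2d_{\max}$ and $\mathrm{Var}(d_{\sss(j)}\mid\mathscr{F}_{j-1})\leq\mathbbm{E}[d_{\sss(j)}^2\mid\mathscr{F}_{j-1}]\leq d_{\max}^2$. After the $n^{-1/3}$ rescaling the jumps of $\bar{M}_n$ and $\bar{B}_n$ are at most $2n^{-1/3}d_{\max}$ and $n^{-1/3}d_{\max}^2$; the required quantities are thus bounded deterministically by $4n^{-2/3}d_{\max}^2$ and $n^{-2/3}d_{\max}^2$. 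Since Lemma~\ref{lem_d_max} gives $d_{\max}^3=o(n)$, we have $d_{\max}^2=o(n^{2/3})$, so both bounds tend to $0$, establishing \eqref{condition3} and \eqref{condition4}.

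The only delicate point is the uniform estimate of $\sum_{i=1}^{j-1}d_{\sss(i)}^3$ needed in \eqref{condition2}: Lemma~\ref{lem_con_1} as stated only controls first and second moments along the exploration, so I rely on the crude bound through $d_{\max}$ combined with the second-moment control. Everything else is a direct substitution and summation; no further martingale argument is needed since the target martingale CLT is already invoked in the proof of Proposition~\ref{thm_main2}.
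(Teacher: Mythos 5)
Your proof is correct and takes essentially the same approach as the paper: compute the conditional moments $\E[d_{\sss(j)}^r\mid\mathscr{F}_{j-1}]$ explicitly, show they converge uniformly to $\sigma_{r+1}/\mu$, and bound the rescaled jumps of $\bar{M}_n$ and $\bar{B}_n$ by powers of $n^{-1/3}d_{\max}$ with $d_{\max}=o(n^{1/3})$. The only substantive variation is in controlling $\sum_{l<j}d_{\sss(l)}^3$ uniformly for $j\leq un^{2/3}$: the paper bounds it deterministically by the cube-sum over the $un^{2/3}$ largest-degree vertices and invokes the uniform-integrability argument of Lemma~\ref{lem_d_max}, whereas you use the crude bound $d_{\max}\sum_{l<j}d_{\sss(l)}^2=o(n^{1/3})\cdot\OP(n^{2/3})=\oP(n)$ via Lemma~\ref{lem_con_1} and Lemma~\ref{lem_d_max} — both yield the same $\oP(n)$ conclusion and the remainder of the argument coincides.
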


\begin{proof}
 Denote by $\sigma_{r}(n)=\frac{1}{n} \sum_{i\in [n]}d_{i}^{r},\: r=2,3$ and $\mu(n)=\frac{1}{n} \sum_{i\in [n]}d_{i}$. To prove \eqref{condition2}, it is enough to prove that
 \begin{equation}
   n^{-2/3}B_{n}(\lfloor un^{2/3}\rfloor) \xrightarrow{\mathbbm{P}}  \frac{\sigma_3 \mu - \sigma_{2}^{2}}{\mu^{2}} u.
 \end{equation}
 Recall that $\mathbbm{E}[ d_{\sss(i)}^{2}\vert \mathscr{F}_{i-1} ] = \sum_{j \notin \mathscr{V}_{i-1}}d_{j}^{3}/\sum_{j \notin \mathscr{V}_{i-1}}d_{j}.$  
 Further, uniformly over $i \leq un^{2/3}$,
 \begin{equation}\label{sum-deg-explored}
 \sum_{j \notin \mathscr{V}_{i-1}}d_{j} = \sum_{j \in [n]}d_{j} + \OP(d_{\max}i) = \ell_n+\oP(n).
 \end{equation}
  Assume that, without loss of generality, $j\mapsto d_{j}$ is non-increasing.  Then, uniformly over $i \leq un^{2/3}$,
 \begin{equation} \label{eq_lem_1}
 \bigg| \sum_{j \notin \mathscr{V}_{i-1}}d_{j}^{3} - n\sigma_{3}(n) \bigg| \leq \sum_{j=1}^{un^{2/3}} d_{j}^{3}.
 \end{equation}
 For each fixed $k$,
 \begin{equation}
 \frac{1}{n} \sum_{j=1}^{un^{2/3}} d_{j}^{3} \leq \frac{1}{n}\sum_{j=1}^{un^{2/3}} \mathbbm{1}_{ \{ d_{j} \leq k \} } d_{j}^{3} + \frac{1}{n}\sum_{j \in [n]} \mathbbm{1}_{ \{ d_{j} > k \} } d_{j}^{3} \leq k^{3}un^{-1/3} + \frac{1}{n}\sum_{j \in [n]} \mathbbm{1}_{ \{ d_{j} > k \} } d_{j}^{3} =o(1),
 \end{equation}
 where we first let $n \to \infty$ and then $k \to \infty$ and use Lemma \ref{lem_d_max}. Therefore, the right-hand side of \eqref{eq_lem_1} is $o(n)$ and we conclude that, uniformly over $i \leq un^{2/3}$,
 \begin{equation}
 \mathbbm{E}\big[d_{\sss(i)}^{2}\vert \mathscr{F}_{i-1}\big] = \frac{\sigma_{3}}{\mu} + \oP(1).
 \end{equation}
A similar argument gives
  \begin{equation}
   \mathbbm{E}\big[d_{\sss(i)} \vert \mathscr{F}_{i-1}\big] = \frac{\sigma_{2}}{\mu} + \oP(1),
  \end{equation}
and \eqref{condition2} follows by noting that the error term is $\oP(1)$, uniformly over $i \leq un^{2/3}$.
 The proofs of \eqref{condition3} and \eqref{condition4} are rather short and we present them below. For \eqref{condition3}, we bound
 \begin{align}
   \mathbbm{E} \Big[ \sup_{s \leq u} \vert \bar{M}_{n}(s) - \bar{M}_{n}(s -) \vert^{2} \Big]&= n^{-2/3} \mathbbm{E} \Big[ \sup_{k \leq un^{2/3}} \vert M_{n}(k) - M_{n}(k-1) \vert^{2} \Big] \nonumber\\
    & = n^{-2/3} \mathbbm{E} \Big[ \sup_{k \leq un^{2/3}} \big| d_{\sss(k)} - \mathbbm{E}[ d_{\sss(k)} \vert \mathscr{F}_{k-1}] \big|^{2} \Big]\nonumber\\
    & \leq n^{-2/3} \mathbbm{E} \Big[ \sup_{k \leq un^{2/3}}  d_{\sss(k)}^{2} \Big] + n^{-2/3} \mathbbm{E} \Big[ \sup_{k \leq un^{2/3}} \mathbbm{E}^2\big[ d_{\sss(k)} \vert \mathscr{F}_{k-1} \big] \Big] \nonumber\\
    & \leq 2n^{-2/3}d_{\max}^2. 
    \end{align}
Similarly,  \eqref{condition4} gives
 \begin{align}
n^{-1/3} \mathbbm{E} \big[ \sup_{s \leq u} \vert \bar{B}_{n}(s) - \bar{B}_{n}(s -) \vert^{2} \big]&= n^{-2/3} \mathbbm{E} \big[ \sup_{k \leq un^{2/3}} \vert B_{n}(k) - B_{n}(k-1) \vert \big] \hspace{2cm}\nonumber \\
& = n^{-2/3} \mathbbm{E} \big[ \sup_{k \leq un^{2/3}} \mathrm{var} \big( d_{\sss(k)} \vert \mathscr{F}_{k-1} \big) \big]\\& \leq 2n^{-2/3} d_{\max}^{2},\nonumber
 \end{align} and Conditions \eqref{condition3} and \eqref{condition4} follow from Lemma \ref{lem_d_max} using $d_{\max}=o(n^{1/3})$.
 \end{proof}
 Next, we prove Condition \eqref{condition1} which requires some more work. Note that
\begin{align}\label{the-split-up}
 &\mathbbm{E} \big[ d_{\sss(i)} -2 \vert \mathscr{F}_{i-1} \big]  = \frac{\sum_{j \notin \mathscr{V}_{i-1}} d_{j}(d_{j}-2)}{\sum_{j \notin \mathscr{V}_{i-1}} d_{j}}\nonumber\\
 &\hspace{2cm}= \frac{\sum_{j \in [n]} d_{j}(d_{j}-2)}{\sum_{j \in [n]} d_{j}}- \frac{\sum_{j \in \mathscr{V}_{i-1}} d_{j}(d_{j}-2)}{\sum_{j \in [n]} d_{j}} + \frac{\sum_{j \notin \mathscr{V}_{i-1}} d_{j}(d_{j}-2)\sum_{j \in \mathscr{V}_{i-1}} d_{j} }{\sum_{j \notin \mathscr{V}_{i-1}} d_{j}\sum_{j \in [n]} d_{j}} \nonumber\\
 &\hspace{2cm}= \frac{\lambda}{n^{1/3}} - \frac{\sum_{j \in \mathscr{V}_{i-1}} d_{j}^{2}}{\sum_{j \in [n]} d_{j}} + \frac{\sum_{j \notin \mathscr{V}_{i-1}} d_{j}^{2}\sum_{j \in \mathscr{V}_{i-1}} d_{j} }{\sum_{j \notin \mathscr{V}_{i-1}} d_{j}\sum_{j \in [n]} d_{j}} +o(n^{-1/3}),
\end{align} where the last step follows from Assumption~\ref{assumption1}~\ref{assumption1-3}.
Therefore,
\begin{equation} \label{eq_cond_1}
\begin{split}
A_{n}(k) &= \sum_{i=1}^{k}\mathbbm{E} \big[ d_{\sss(i)} -2 \vert \mathscr{F}_{i-1} \big] \\
&= \frac{k\lambda}{n^{1/3}} - \sum_{i=1}^{k} \frac{\sum_{j \in \mathscr{V}_{i-1}} d_{j}^{2}}{\sum_{j \in [n]} d_{j}} + \sum_{i=1}^{k}\frac{\sum_{j \notin \mathscr{V}_{i-1}} d_{j}^{2}\sum_{j \in \mathscr{V}_{i-1}} d_{j} }{\sum_{j \notin \mathscr{V}_{i-1}} d_{j}\sum_{j \in [n]} d_{j}} + o(kn^{-1/3}).
\end{split}
\end{equation}

 The following lemma estimates the sums on the right-hand side of \eqref{eq_cond_1}:

 \begin{lemma} \label{lem_con_2}
  For all $u>0$, as $n \to \infty$,
  \begin{equation} \label{lem_eq3}
   \sup_{s \leq u} \bigg| n^{-1/3} \sum_{i=1}^{\lfloor s n^{2/3} \rfloor} \sum_{j=1}^{i-1} \frac{d_{\sss(j)}^{2}}{\ell_n}- \frac{\sigma_{3} s^{2}}{2\mu^{2}} \bigg| \xrightarrow{\mathbbm{P}} 0
  \end{equation} and
  \begin{equation} \label{lem_eq4}
   \sup_{s \leq u} \bigg| n^{-1/3} \sum_{i=1}^{\lfloor s n^{2/3} \rfloor} \sum_{j=1}^{i-1} \frac{d_{\sss(j)}}{\ell_n}- \frac{\sigma_{2} s^{2}}{2\mu^{2}} \bigg| \xrightarrow{\mathbbm{P}} 0.
  \end{equation} Consequently, 
  \begin{equation} \label{lem:estimate-drift}
\sup_{s \leq u} \bigg|n^{-1/3}\sum_{i=1}^{\lfloor s n^{2/3} \rfloor}\frac{\sum_{j \notin \mathscr{V}_{i-1}} d_{j}^{2}\sum_{j \in \mathscr{V}_{i-1}} d_{j} }{\sum_{j \notin \mathscr{V}_{i-1}} d_{j}\sum_{j \in [n]} d_{j}} - \frac{\sigma_{2}^{2}s^{2}}{2\mu^{3}}\bigg| \xrightarrow{\mathbbm{P}} 0.
\end{equation}
 \end{lemma}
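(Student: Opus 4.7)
Both \eqref{lem_eq3} and \eqref{lem_eq4} are double sums that can be rewritten as Riemann sums for an integral of the partial-sum process controlled by Lemma~\ref{lem_con_1}; the statement \eqref{lem:estimate-drift} then follows by algebraic manipulation.

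For \eqref{lem_eq3}, set $g_n(v):=n^{-2/3}\sum_{j\leq vn^{2/3}}d_{\sss(j)}^2$, so that by Lemma~\ref{lem_con_1} we have $\sup_{v\leq u}|g_n(v)-\sigma_3 v/\mu|\pto 0$. Writing $\sum_{j=1}^{i-1}d_{\sss(j)}^2/\ell_n=(n^{2/3}/\ell_n)\,g_n((i-1)/n^{2/3})$, the left-hand side of \eqref{lem_eq3} becomes
\begin{equation}
\frac{n}{\ell_n}\cdot n^{-2/3}\sum_{i=1}^{\lfloor sn^{2/3}\rfloor} g_n\bigl((i-1)/n^{2/3}\bigr).
\end{equation}
Since $g_n$ is nondecreasing, the Riemann-sum error against $\int_0^s g_n(v)\,dv$ is at most $n^{-2/3}g_n(s)=\OP(n^{-2/3})$. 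Combined with uniform convergence of $g_n$ and $\ell_n/n\to\mu$, the expression converges in probability to $\mu^{-1}\int_0^s(\sigma_3 v/\mu)\,dv=\sigma_3 s^2/(2\mu^2)$. Uniformity in $s\leq u$ follows from the monotonicity of the prelimit sum in $s$, together with continuity of the limit (a standard Dini-type argument). The proof of \eqref{lem_eq4} is identical after replacing $d_{\sss(j)}^2$ by $d_{\sss(j)}$ and invoking \eqref{lem_eq2} in place of \eqref{lem_eq1}.

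For \eqref{lem:estimate-drift}, Lemma~\ref{lem_con_1} and Assumption~\ref{assumption1}\ref{assumption1-2} give
\begin{equation}
\sum_{j\notin\mathscr{V}_{i-1}}d_j^2=n\sigma_2+\oP(n),\qquad \sum_{j\notin\mathscr{V}_{i-1}}d_j=\ell_n+\oP(n),
\end{equation}
uniformly over $i\leq sn^{2/3}$, since the discarded contributions are $\OP(n^{2/3})$. Hence uniformly in $i\leq sn^{2/3}$,
\begin{equation}
\frac{\sum_{j\notin\mathscr{V}_{i-1}}d_j^2}{\sum_{j\notin\mathscr{V}_{i-1}}d_j\cdot\sum_{j\in[n]}d_j/n}=\frac{\sigma_2}{\mu}+\oP(1),
\end{equation}
so that the summand in \eqref{lem:estimate-drift} equals $(\sigma_2/\mu+\oP(1))\sum_{j\in\mathscr{V}_{i-1}}d_j/\ell_n$. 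Summing, pulling the uniform error out, and applying \eqref{lem_eq4} yields
\begin{equation}
\frac{\sigma_2}{\mu}\cdot\frac{\sigma_2 s^2}{2\mu^2}+\oP(1)=\frac{\sigma_2^2 s^2}{2\mu^3}+\oP(1),
\end{equation}
uniformly in $s\leq u$.

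\textbf{Anticipated difficulty.} The only non-routine issue is to make every approximation uniform in $s$ (and internally uniform in $i\leq sn^{2/3}$). This is why it is essential to have the uniform version of Lemma~\ref{lem_con_1} at hand: once the inner partial sums are controlled uniformly, monotonicity of the outer sum and continuity of the parabolic limit upgrade pointwise convergence to uniform convergence on compacts, and the Riemann-sum error is controlled by $n^{-2/3}d_{\max}^2=o(1)$ via Lemma~\ref{lem_d_max}.
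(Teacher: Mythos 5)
Your proof is correct and follows essentially the same strategy as the paper's: reduce both double sums to the uniform control provided by Lemma~\ref{lem_con_1}, and for \eqref{lem:estimate-drift} factor the summand as $\bigl(\sum_{j\notin\mathscr{V}_{i-1}}d_j^2/\sum_{j\notin\mathscr{V}_{i-1}}d_j\bigr)\cdot\bigl(\sum_{j\in\mathscr{V}_{i-1}}d_j/\ell_n\bigr)$ and apply \eqref{lem_eq4}. The only difference is cosmetic bookkeeping: you phrase the outer sum as a Riemann sum for $\int_0^s g_n$, where the paper instead telescopes against $\sigma_3(i-1)/\mu$ and bounds three error terms by a triangle inequality; both give the same $\OP(n^{-2/3})$ discretization error. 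One small slip: the intermediate display $\sum_{j\notin\mathscr{V}_{i-1}}d_j^2\big/\bigl(\sum_{j\notin\mathscr{V}_{i-1}}d_j\cdot\sum_{j\in[n]}d_j/n\bigr)=\sigma_2/\mu+\oP(1)$ should read $\sigma_2/\mu^2+\oP(1)$ (the $\sum_{j\in[n]}d_j/n$ should not appear in that ratio), but the factorization you actually use in the next line is the correct one, so the conclusion is unaffected.
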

 \begin{proof}
  Notice that
   \begin{equation}\begin{split}
     &\sup_{s \leq u} \Big| n^{-1/3} \sum_{i=1}^{\lfloor s n^{2/3} \rfloor} \sum_{j=1}^{i-1} \frac{d_{\sss (j)}^{2}}{\ell_n}- \frac{\sigma_{3} s^{2}}{2\mu^{2}} \Big| = \sup_{k \leq un^{2/3}} \Big| n^{-1/3} \sum_{i=1}^{k} \sum_{j=1}^{i-1} \frac{d_{\sss (j)}^{2}}{\ell_n}- \frac{\sigma_{3} k^{2}}{2\mu^{2}n^{4/3}} \Big| \\
 & \leq \frac{1}{\ell_{n}} \sup_{k \leq un^{2/3}} \Big| n^{-1/3} \sum_{i=1}^{k} \Big(\sum_{j=1}^{i-1} d_{\sss (j)}^{2}- \frac{\sigma_{3} (i-1)}{\mu} \Big) \Big| \\
 &\hspace{1cm}+ \sup_{k \leq un^{2/3}} \Big| \frac{k \sigma_3}{2 \mu \ell_n n^{1/3}} \Big| + \sup_{k \leq un^{2/3}} \Big| \frac{k^2 \sigma_3}{2 \mu \ell_n n^{1/3}} -\frac{k^2 \sigma_3}{2 \mu^2 n^{4/3}} \Big|\\
 & \leq \frac{1}{\ell_n} n^{-1/3} un^{2/3} \sup_{i \leq un^{2/3}} \Big| \sum_{j=1}^{i}   d_{\sss (j)}^{2}- \frac{\sigma_{3} i}{\mu}  \Big| + o(1)+ \frac{\sigma_3 n^{-1/3}}{2 \mu} \Big| \frac{1}{\ell_n} - \frac{1}{n \mu} \Big| u^2 n^{4/3}\\
 & \leq \frac{u}{\mu + o(1)} \sup_{s \leq u} \Big| \Big(n^{-2/3}\sum_{j=1}^{\lfloor s n^{2/3} \rfloor} d_{\sss(j)}^{2}- \frac{\sigma_{3} s}{\mu} \Big) \Big| +o(1).
 \end{split}
   \end{equation} and \eqref{lem_eq3} follows from \eqref{lem_eq1} in Lemma~\ref{lem_con_1}. The proof of \eqref{lem_eq4} is similar and it follows from~\eqref{lem_eq2}.
 We now show \eqref{lem:estimate-drift}. Recall that $\sigma_2(n) = \frac{ 1}{n}\sum_{i \in [n]} d_i^2$ and observe
\begin{equation}
 \frac{1}{n}\sum_{j \notin \mathscr{V}_{i-1}} d_{j}^{2} = \sigma_{2}(n)-\frac{1}{n} \sum_{j \in \mathscr{V}_{i-1}} d_{j}^{2}= \sigma_2(n) +o_{\sss\mathbbm{P}}(1)
\end{equation} uniformly over $i \leq un^{2/3}$ where we use Lemma \ref{lem_con_1} to conclude the uniformity. Similarly, \eqref{sum-deg-explored} implies that $\sum_{j \notin \mathscr{V}_{i-1}} d_{j} = \ell_n+o_{\sss\mathbbm{P}}(n)$ uniformly over $i \leq un^{2/3}$. Therefore,
\begin{equation}
n^{-1/3}\sum_{i=1}^{k}\frac{\sum_{j \notin \mathscr{V}_{i-1}} d_{j}^{2}\sum_{j \in \mathscr{V}_{i-1}} d_{j} }{\sum_{j \notin \mathscr{V}_{i-1}} d_{j}\sum_{j \in [n]} d_{j}} = \frac{n\sigma_{2}(n)+o_{\sss\mathbbm{P}}(n)}{\ell_n+o_{\sss\mathbbm{P}}(n)} n^{-1/3}\sum_{i=1}^{k}\frac{\sum_{j \in \mathscr{V}_{i-1}} d_{j}}{\ell_n}
\end{equation}
 and Assumption~\ref{assumption1}, combined with \eqref{lem_eq4}, complete the proof.
\end{proof}
 \begin{lemma}
 Condition \eqref{condition1} holds.
 \end{lemma}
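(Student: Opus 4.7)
The plan is to simply collect the three terms already isolated in \eqref{eq_cond_1} and identify each one using Lemma~\ref{lem_con_2}. Starting from
\begin{equation*}
A_{n}(k) = \frac{k\lambda}{n^{1/3}} - \sum_{i=1}^{k} \frac{\sum_{j \in \mathscr{V}_{i-1}} d_{j}^{2}}{\ell_n} + \sum_{i=1}^{k}\frac{\sum_{j \notin \mathscr{V}_{i-1}} d_{j}^{2}\sum_{j \in \mathscr{V}_{i-1}} d_{j} }{\sum_{j \notin \mathscr{V}_{i-1}} d_{j}\,\ell_n} + o(kn^{-1/3}),
\end{equation*}
I set $k=\lfloor sn^{2/3}\rfloor$, divide through by $n^{1/3}$, and examine the four pieces individually, uniformly in $s\leq u$.

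For the first term, $n^{-1/3}\cdot k\lambda/n^{1/3}=\lambda \lfloor sn^{2/3}\rfloor/n^{2/3}\to \lambda s$ uniformly on $[0,u]$. The $o(kn^{-1/3})$ error, divided by $n^{1/3}$, is $o(s)$ uniformly on $[0,u]$. For the second term, noting that $\sum_{j\in \mathscr{V}_{i-1}}d_j^2=\sum_{j=1}^{i-1}d_{\sss(j)}^2$, I apply \eqref{lem_eq3} of Lemma~\ref{lem_con_2} to get uniform convergence to $\sigma_3 s^2/(2\mu^2)$. For the third term, \eqref{lem:estimate-drift} of Lemma~\ref{lem_con_2} directly yields uniform convergence to $\sigma_2^2 s^2/(2\mu^3)$.

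Combining these,
\begin{equation*}
\sup_{s\leq u}\bigl|\bar{A}_n(s) - \lambda s + \tfrac{\sigma_3 s^2}{2\mu^2} - \tfrac{\sigma_2^2 s^2}{2\mu^3}\bigr| \xrightarrow{\mathbbm{P}} 0,
\end{equation*}
and finally I use the defining identity $\eta=\sigma_3\mu-\sigma_2^2$ from \eqref{parameter} to rewrite
\begin{equation*}
\frac{\sigma_3}{2\mu^2} - \frac{\sigma_2^2}{2\mu^3} = \frac{\sigma_3\mu-\sigma_2^2}{2\mu^3} = \frac{\eta}{2\mu^3},
\end{equation*}
which gives exactly \eqref{condition1}. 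There is no real obstacle here; all the work was already absorbed into establishing \eqref{eq_cond_1} and Lemma~\ref{lem_con_2}. The only mild subtlety is checking that the $o(kn^{-1/3})$ error term in \eqref{eq_cond_1} (coming from Assumption~\ref{assumption1}~\ref{assumption1-3}) is genuinely uniform in $k\leq un^{2/3}$, but since its bound depends only on $k$ and $n$ (and not on the random history), uniformity over $s\in[0,u]$ is immediate.
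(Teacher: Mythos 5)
Your proof is correct and follows exactly the same route as the paper's (one-line) proof, which simply says to substitute Lemma~\ref{lem_con_2} into \eqref{eq_cond_1}; you have just spelled out the details, including the uniformity of the deterministic $o(kn^{-1/3})$ error and the algebraic identity $\eta=\sigma_3\mu-\sigma_2^2$ from \eqref{parameter}.
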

 \begin{proof}
  The proof follows by using  Lemma \ref{lem_con_2} in  \eqref{eq_cond_1}.
 \end{proof}
 \subsection{Finite dimensional convergence of the ordered component sizes} Note that the convergence of the exploration process in Theorem~\ref{thm_main1} implies that, for any large $T>0$, the $k$-largest components explored up to time $Tn^{2/3}$ converge to the  $k$-largest excursions above past minima of $\mathbf{B}^\lambda_{\mu,\eta}$ up to time $T$. Therefore, we can conclude the finite dimensional convergence of the ordered components sizes in the whole graph if we can show that the large components are explored \emph{early} by the exploration process. The following lemma formalizes the above statement:
 \begin{lemma}\label{lem:large-com-explored-early}Let $\mathscr{C}_{\max}^{\sss \geq T}$ denote the largest component which is started exploring after time $Tn^{2/3}$ in Algorithm~\ref{algo:1}. Then, for any $\delta >0$,
 \begin{equation}\label{large-com-explored-early}
  \lim_{T\to\infty}\limsup_{n\to\infty}\prob{|\mathscr{C}_{\max}^{\sss \geq T}|>\delta n^{2/3}}=0.
 \end{equation}
 \end{lemma}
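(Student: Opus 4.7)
The plan is to exploit the strongly negative drift of the exploration process beyond time $Tn^{2/3}$: the drift of $\bar{\mathbf{s}}_n$ at scaled time $s$ is approximately $\lambda - \eta s/\mu^{3}$, which is very negative for $s \geq T$ with $T$ large, so any excursion starting after this time should be short. I would translate this into a statement about component sizes via the identification of component sizes with excursion lengths of $\mathbf{S}_n$.

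First, I would reduce from $\mathbf{S}_n$ to the simpler walk $\mathbf{s}_n$. By Lemma~\ref{cor_c_k}, the cumulative cycle contribution $\sum_{j\leq Tn^{2/3}}c_{\sss (j)}$ is $\OP(1)$ on the high-probability event $E_n(T,M)$ (for $M$ large). Hence the excursion lengths of $\bar{\mathbf{S}}_n$ and $\bar{\mathbf{s}}_n$ differ by $\oP(n^{2/3})$, and it suffices to show that, with probability tending to one as first $n\to\infty$ and then $T\to\infty$, no excursion of $\mathbf{s}_n$ of length exceeding $\delta n^{2/3}$ begins at any time $k_0\geq Tn^{2/3}$.

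Such an excursion starting at $k_0\geq Tn^{2/3}$ forces $s_n(k_0+\lfloor \delta n^{2/3}\rfloor) > s_n(k_0)$. Using the Doob--Meyer decomposition $s_n=M_n+A_n$ from \eqref{split_up} together with Lemma~\ref{lem_con_2} (extended to $u=u(n)\to\infty$, which is permissible since the error terms in its proof are governed by $d_{\max}=o(n^{1/3})$ and Lemma~\ref{lem_general}), for $s_0:=k_0/n^{2/3}\geq T$ one has
\begin{equation*}
A_n(k_0+\lfloor \delta n^{2/3}\rfloor) - A_n(k_0) \leq -c_1 s_0\delta n^{1/3}
\end{equation*}
on a high-probability event, with $c_1=\eta/(2\mu^{3})$ (for $T$ large enough to absorb $\lambda$). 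Therefore the event forces $M_n(k_0+\lfloor \delta n^{2/3}\rfloor) - M_n(k_0) \geq c_1 s_0\delta n^{1/3}$, whose probability is at most $\exp(-c_2 s_0^{2}\delta)$ by Freedman's martingale inequality together with the quadratic-variation bound $B_n(k_0+\lfloor \delta n^{2/3}\rfloor) - B_n(k_0) = O(\delta n^{2/3})$ obtained from Lemma~\ref{lem_con_1}.

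Finally, a union bound over starting times $k_0$ on a grid of spacing $\lfloor \delta n^{2/3}/2\rfloor$ inside $[Tn^{2/3}, n]$ yields
\begin{equation*}
\sum_{j\geq 0}\exp\bigl(-c_2 (T+j\delta/2)^{2}\delta\bigr) \leq \frac{C}{T\delta^{2}}\exp(-c_2 T^{2}\delta),
\end{equation*}
uniformly in $n$ by comparison with a Gaussian tail integral. Letting $n\to\infty$ and then $T\to\infty$ drives this bound to zero, completing the proof. The main obstacle is extending the drift and quadratic-variation estimates of Lemmas~\ref{lem_con_1}--\ref{lem_con_2} uniformly to $k_0$ of order $n$; this is tractable because their key inputs (the size-biased reordering of Lemma~\ref{lem_general} and $d_{\max}=o(n^{1/3})$) continue to apply for $u$ growing slowly with $n$.
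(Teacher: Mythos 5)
Your approach --- a martingale (Freedman) concentration bound combined with a union bound over grid points in $[Tn^{2/3}, n]$ --- is genuinely different from the paper's. The paper instead observes that after time $i_T = \inf\{i \geq Tn^{2/3}: S_n(i) = \min_{j\leq i}S_n(j)\}$ the \emph{unexplored} graph is itself a configuration model whose criticality parameter satisfies $\bar{\nu}_n \leq \nu_n - C_0 T n^{-1/3} + o_{\sss\PR}(n^{-1/3})$ by Lemma~\ref{lem-time-nu-rel}, hence is subcritical for $T$ large. It then invokes Janson's bound $\E[|\mathscr{C}(V_n)|]\leq 1+\E[D_n]/(1-\bar\nu_n)$ (Lemma~\ref{lem:janson-lemma}) together with Markov's inequality on $\sum_i|\mathscr{C}^{\sss\geq T}_{\sss(i)}|^2$. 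The virtue of that route is that once subcriticality is established, it completely decouples from the fine behaviour of the exploration walk far beyond time $Tn^{2/3}$.

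Your route has a genuine gap exactly where you flag an ``obstacle'' and assert it is tractable. Your union bound runs over $k_0\in[Tn^{2/3},n]$, i.e.~scaled times $s_0$ up to order $n^{1/3}$, and for each such $k_0$ you need (i) the drift estimate $A_n(k_0+\lfloor\delta n^{2/3}\rfloor)-A_n(k_0)\leq -c_1 s_0\delta n^{1/3}$ and (ii) the conditional-variance bound $B_n(k_0+\lfloor\delta n^{2/3}\rfloor)-B_n(k_0)=O(\delta n^{2/3})$. Both rest on Lemma~\ref{lem_con_1}--\ref{lem_con_2}, which in turn rest on Lemma~\ref{lem_general} under the hypothesis $m\,d_{\max}/\ell_n\to 0$. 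For $m=u\,n^{2/3}$ with $u$ of order $n^{1/3}$, that hypothesis becomes $d_{\max}/\mu\to 0$, which fails (trivially $d_{\max}\geq 1$; and in general $d_{\max}$ can be close to $n^{1/3}$). Thus the estimates do \emph{not} extend to ``$u$ growing slowly with $n$'' in the range you need: they hold for fixed $t$, or for $u\leq u_n$ growing at most like $(n^{1/3}/d_{\max})^{1-\epsilon}$, which can be much smaller than $n^{1/3}$. Relatedly, the parabolic drift approximation $-\eta s^2/(2\mu^3)$ is a local Taylor expansion in the critical window; for $s$ of order $n^{1/3}$ it would predict a decrease of order $n$ per step-window of length $\delta n^{2/3}$, which is impossible since $|d_{\sss(j)}-2|\leq d_{\max}=o(n^{1/3})$. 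So the estimates you cite are being pushed outside their domain of validity. One could repair the argument by truncating: use the martingale bound for $Tn^{2/3}\leq k_0\leq T'n^{2/3}$ with fixed $T'$ and handle the range $k_0>T'n^{2/3}$ by a crude subcriticality argument --- but the latter is essentially what the paper does, so one would end up reproducing its structure.

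Note also that your reduction ``the excursion lengths of $\bar{\mathbf{S}}_n$ and $\bar{\mathbf{s}}_n$ differ by $o_{\sss\PR}(n^{2/3})$'' is stated only on the good event $E_n(t,M)$ from Lemma~\ref{cor_c_k}, which again concerns only the window $k\leq tn^{2/3}$ for fixed $t$; controlling the cycle half-edges beyond $tn^{2/3}$ would require a separate argument.
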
Let us first state the two main ingredients to complete the proof of Lemma~\ref{lem:large-com-explored-early}: 
 \begin{lemma}[{\cite[Lemma 5.2]{J09b}}]\label{lem:janson-lemma}  Consider $\mathrm{CM}_n(\boldsymbol{d})$ with $\nu_n<1$ and let $\mathscr{C}(V_n)$ denote the component containing the vertex $V_n$, where $V_n$ is a vertex chosen uniformly at random independently of the graph~$\mathrm{CM}_n(\boldsymbol{d})$. Then,
 \begin{equation}
  \expt{|\mathscr{C}(V_n)|}\leq 1+\frac{\expt{D_n}}{1-\nu_n}.
 \end{equation}  
 \end{lemma}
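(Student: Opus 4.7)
The plan is to analyse the BFS exploration of $\mathscr{C}(V_n)$ and dominate its size by the total progeny of a Galton--Watson branching process. Start at $V_n$ and reveal one half-edge pairing at a time; let $H_t$ denote the number of active half-edges (unpaired, from already discovered vertices) after $t$ pairings, so that $H_0=D_{V_n}$. Each step pairs an active half-edge with a uniformly random other unpaired half-edge, either revealing a new vertex $w$ (with $\Delta H_t=d_w-2$) or colliding with a previously discovered vertex (with $\Delta H_t=-2$). Setting $T=\min\{t\geq 0:H_t=0\}$, one has $|\mathscr{C}(V_n)|\leq 1+T$ since each pairing reveals at most one new vertex.

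Next, I would compare to the Galton--Watson tree $\mathcal{T}$ whose root has $D_{V_n}$ children and whose non-root nodes have offspring distributed as $\tilde{D}-1$, where $\tilde{D}$ is the size-biased empirical degree, $\prob{\tilde{D}=k}=k\,\#\{v:d_v=k\}/\ell_n$, with mean $\expt{\tilde{D}-1}=\nu_n<1$. The standard total-progeny formula then gives $\expt{|\mathcal{T}|}=1+\expt{D_n}\sum_{k\geq 0}\nu_n^k=1+\expt{D_n}/(1-\nu_n)$. The inequality $\expt{|\mathscr{C}(V_n)|}\leq\expt{|\mathcal{T}|}$ would follow from a conditional drift estimate $\expt{\Delta H_t\mid\mathscr{F}_t}\leq\nu_n-1$. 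A direct computation using that the partner half-edge is uniform among the $\ell_n-2t-1$ remaining unpaired ones yields
\[
\expt{\Delta H_t\mid\mathscr{F}_t}=\frac{\sum_{w\text{ undisc.}}d_w^2}{\ell_n-2t-1}-2,
\]
and an optional-stopping argument (with the standard truncation to justify integrability) then gives $\expt{T}\leq\expt{D_n}/(1-\nu_n)$.

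The main obstacle is upgrading the drift identity to the uniform bound $\expt{\Delta H_t\mid\mathscr{F}_t}\leq\nu_n-1$ for all $t<T$. A priori $\sum_{w\text{ undisc.}}d_w^2\leq\sum_v d_v^2=\ell_n(1+\nu_n)$, but dividing by the smaller denominator $\ell_n-2t-1$ naively inflates this bound; the fix uses the identity $\sum_{w\text{ undisc.}}d_w=\ell_n-2t-H_t$ to couple numerator and denominator via $\sum_{v\text{ disc.}}d_v^2$, which grows by at least the size-biased second moment at each pairing. Alternatively, one may bypass the drift analysis by a direct path-counting argument: write $\prob{u\sim v}\leq\sum_k\expt{\#\text{ simple paths of length }k\text{ from }u\text{ to }v}$ and evaluate the expected path count via the configuration-model matching probability $1/((\ell_n-1)(\ell_n-3)\cdots(\ell_n-2k+1))$; summing over $u\neq v$, using $\sum_v d_v=\ell_n$ and $\sum_v d_v(d_v-1)=\ell_n\nu_n$, produces a geometric series in $\nu_n$ that telescopes to the claimed bound after dividing by $n$.
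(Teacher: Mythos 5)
The paper does not prove this lemma at all: it is imported verbatim from Janson \cite[Lemma 5.2]{J09b}, so your attempt has to stand on its own, and as written it does not. The crux of your main route is the uniform drift bound $\E[\Delta H_t\mid \mathscr{F}_t]\le \nu_n-1$ for all $t<T$, and this is simply false in general. Your (correct) identity gives $\E[\Delta H_t\mid\mathscr{F}_t]=\frac{\sum_{w\ \text{undisc.}}d_w^2}{\ell_n-2t-1}-2$, and since $\sum_{v}d_v^2=\ell_n(1+\nu_n)$, the desired bound is equivalent to $\sum_{v\ \text{disc.}}d_v^2\ge(1+\nu_n)(2t+1)$. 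Take a degree sequence containing degree-$1$ and degree-$2$ vertices with $\nu_n\in(2/3,1)$; with positive probability the root has degree $1$ and its half-edge is paired to a degree-$2$ vertex, so at $t=1$ the left side is $5$ while the right side is $3(1+\nu_n)>5$. Hence $H_t+(1-\nu_n)t$ is not a supermartingale and optional stopping cannot be invoked as claimed. The proposed ``fix'' is also unsound: $\sum_{v\ \text{disc.}}d_v^2$ does not grow ``by at least the size-biased second moment at each pairing'' --- it grows by $d_w^2$ (possibly $1$) when a new vertex $w$ is found and by $0$ on a collision; only its \emph{conditional expectation} grows at that rate, which does not yield the pointwise bound you need. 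More structurally, the cluster exploration is not stochastically dominated by the Galton--Watson tree with offspring $\tilde D-1$: discovering low-degree vertices makes the residual size-biased law heavier \emph{and} shrinks the denominator $\ell_n-2t-1$, which is exactly why this lemma is not a one-line branching-process comparison.

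Your alternative path-counting sketch hides the same difficulty rather than resolving it. The probability that $k$ prescribed disjoint pairs of half-edges all appear in the matching is $\prod_{i=1}^{k}(\ell_n-2i+1)^{-1}$, which exceeds $\ell_n^{-k}$; bounding the sum over distinct intermediate vertices by $\big(\sum_w d_w(d_w-1)\big)^{k-1}=(\ell_n\nu_n)^{k-1}$ therefore gives $\E[\#\text{paths}_k(u,v)]\le \frac{d_ud_v\nu_n^{k-1}}{\ell_n}\prod_{i=1}^{k}\frac{\ell_n}{\ell_n-2i+1}$, and the last factor is unbounded in $k$, so no geometric series ``telescopes'' to $\E[D_n]/(1-\nu_n)$. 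To obtain the clean non-asymptotic constant one must play the distinctness of the path's vertices (a falling-factorial saving) quantitatively against this inflation of the matching probabilities; that comparison is the real content of Janson's argument, and your sketch asserts the conclusion without carrying it out. In short, both routes are plausible starting points, but each one's key estimate is exactly the missing (and, in the supermartingale form you state it, false) step.
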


 \begin{lemma}\label{lem-time-nu-rel}Define, $\nu_{n,i}=\sum_{j\notin \mathscr{V}_{i-1}}d_j(d_j-1)/\sum_{j\notin \mathscr{V}_{i-1}}d_j.$ There exists  some constant $C_0>0$ such that for any $T>0$,
 \begin{equation}\label{nu-n-i:nu-n:relation}
 \nu_{n,Tn^{2/3}}= \nu_n-C_0 Tn^{-1/3}+o_{\sss\mathbbm{P}}(n^{-1/3}).
\end{equation}
 \end{lemma}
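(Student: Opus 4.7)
\textbf{Proof plan for Lemma~\ref{lem-time-nu-rel}.} The plan is to isolate the $n^{-1/3}$-order correction of $\nu_{n,i}$ by expanding around $\nu_n$ and then applying Lemma~\ref{lem_con_1} to evaluate the partial sums at the relevant exploration time. Set
\begin{equation*}
P_{n,i} := \sum_{j \in \mathscr{V}_{i-1}} d_j, \qquad Q_{n,i} := \sum_{j \in \mathscr{V}_{i-1}} d_j(d_j-1) \;=\; \sum_{j=1}^{i-1} d_{\sss(j)}^{2} - \sum_{j=1}^{i-1} d_{\sss(j)},
\end{equation*}
so that $\nu_{n,i} = (\ell_n \nu_n - Q_{n,i})/(\ell_n - P_{n,i})$. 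By Lemma~\ref{lem_con_1}, uniformly over $i \le T n^{2/3}$ both $P_{n,i}$ and $Q_{n,i}$ are $\OP(n^{2/3})$; combined with $\ell_n = \mu n + o(n)$ this gives $P_{n,i}/\ell_n, Q_{n,i}/\ell_n = \OP(n^{-1/3})$.

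A one-term Taylor expansion of $(1 - P_{n,i}/\ell_n)^{-1}$ then yields
\begin{equation*}
\nu_{n,i} \;=\; \nu_n \;+\; \frac{\nu_n P_{n,i} - Q_{n,i}}{\ell_n} \;+\; \OP(n^{-2/3}),
\end{equation*}
the remainder being of order $(P_{n,i}/\ell_n)^2$. Substituting $i = T n^{2/3}$ and invoking Lemma~\ref{lem_con_1} once more gives
\begin{equation*}
P_{n,Tn^{2/3}} = \frac{\sigma_2 T}{\mu}\, n^{2/3} + \oP(n^{2/3}), \qquad Q_{n,Tn^{2/3}} = \frac{(\sigma_3 - \sigma_2)T}{\mu}\, n^{2/3} + \oP(n^{2/3}),
\end{equation*}
and combining with $\nu_n = 1 + O(n^{-1/3})$ from Assumption~\ref{assumption1}\ref{assumption1-3}, the leading correction reduces to $(2\sigma_2 - \sigma_3)\, T\, n^{-1/3}/\mu^2 + \oP(n^{-1/3})$.

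Finally, because $\nu = 1$ forces $\sigma_2 = 2\mu$, we get $2\sigma_2 - \sigma_3 = -(\sigma_3 - 4\mu) = -\eta/\mu$, so~\eqref{nu-n-i:nu-n:relation} holds with $C_0 = \eta/\mu^3$. Positivity of $C_0$ follows from $\eta = \sigma_3 \mu - \sigma_2^2 > 0$, which is the strict Cauchy--Schwarz inequality for the non-degenerate random variable $D$ (non-degeneracy is forced by $\nu = 1$ together with $\PR(D=1) > 0$, which rules out $D$ being a.s.\ constant). The computation itself is entirely routine; the only mildly delicate point is making sure the $\oP$ terms are uniform in $i \le Tn^{2/3}$, which is already built into the sup-formulation of Lemma~\ref{lem_con_1}, so there is no serious obstacle.
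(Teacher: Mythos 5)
Your proof is correct and follows essentially the same route as the paper's: decompose $\nu_{n,i}$ around $\nu_n$, estimate the partial sums $\sum_{j\in\mathscr{V}_{i-1}}d_j$ and $\sum_{j\in\mathscr{V}_{i-1}}d_j^2$ via Lemma~\ref{lem_con_1}, and read off the $n^{-1/3}$ coefficient. You additionally pin down the explicit constant $C_0 = \eta/\mu^3$, which the paper leaves implicit. One small caveat about your positivity argument: strict Cauchy--Schwarz for $\sigma_3\mu > \sigma_2^2$ requires more than ``$D$ not a.s.\ constant''; equality $\eta = 0$ holds precisely when $D^{1/2}$ and $D^{3/2}$ are a.s.\ proportional, i.e.\ when $D$ is supported on $\{0,c\}$ for some $c>0$ (equivalently, the size-biased variable $D^*$ is degenerate, since $\eta/\mu^2 = \mathrm{Var}(D^*)$). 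Under $\nu=1$ this would force $c=2$, contradicting $\PR(D=1)>0$, so the conclusion is right, but the parenthetical justification needs this sharper characterization. The paper instead argues via $\sigma_3-2\sigma_2 = \E[D(D-1)(D-2)]+\E[D(D-2)]$, with the second term vanishing when $\nu=1$, which reduces positivity to $\PR(D\geq 3)>0$ --- again a consequence of $\nu=1$ and $\PR(D=1)>0$.
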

 \begin{proof}
  Using a similar split up as in \eqref{the-split-up}, we have
\begin{equation}\label{nu-n-i:nu-n:split}
 \nu_{n,i}= \nu_n+\frac{\sum_{j \in \mathscr{V}_{i-1}}d_j(d_j-1)}{\ell_n}- \frac{\sum_{j\notin \mathscr{V}_{i-1}}d_j(d_j-1)\sum_{j\in \mathscr{V}_{i-1}}d_j}{\ell_n\sum_{j\notin \mathscr{V}_{i-1}}d_j}.
\end{equation}Now, \eqref{lem_eq2} and \eqref{lem_eq1} give that, uniformly over $i\leq Tn^{2/3}$,\
\begin{subequations}
\begin{equation}
 \frac{\sum_{j\notin \mathscr{V}_{i-1}}d_j(d_j-1)}{\sum_{j\notin \mathscr{V}_{i-1}}d_j}= \frac{\sum_{j\in [n]}d_j(d_j-1)+o_{\sss\mathbbm{P}}(n^{2/3})}{\sum_{j\in [n]}d_j+o_{\sss\mathbbm{P}}(n^{2/3})}=1+o_{\sss\mathbbm{P}}(n^{-1/3}),
\end{equation}
\begin{equation}
 \sum_{j\in \mathscr{V}_{i-1}}d_j(d_j-2)=\Big(\frac{\sigma_3}{\mu}-2\Big)(i-1)+o_{\sss \mathbbm{P}}(n^{2/3}).
\end{equation}
\end{subequations}Further, note that $\sigma_3-2\mu = \mathbbm{E}[D(D-1)(D-2)]+ \mathbbm{E}[D(D-2)]>0$, by Assumption~\ref{assumption1}~\ref{assumption1-3}, and~\ref{assumption1-4}. Therefore, \eqref{nu-n-i:nu-n:split} gives \eqref{nu-n-i:nu-n:relation}.
 \end{proof}
 \begin{proof}[Proof of Lemma~\ref{lem:large-com-explored-early}] Let $i_{\sss T}:=\inf\{i\geq Tn^{2/3}: S_n(i)= \inf_{j\leq i}S_n(j)\} $. Thus, $i_{\sss T}$ denotes the first time we finish exploring a component after time $Tn^{2/3}$. Note that, conditional on the explored vertices up to time $i_{\sss T}$, the remaining graph $\bar{\mathcal{G}}$ is still a configuration model. Let $\bar{\nu}_n=\sum_{i\in \bar{\mathcal{G}}}d_i(d_i-1)/\sum_{i\in \bar{\mathcal{G}}}d_i$ be the criticality parameter of $\bar{\mathcal{G}}$. Then, using \eqref{nu-n-i:nu-n:relation}, we can conclude that 
 \begin{equation}\label{eq:bar-nu-n}
 \bar{\nu}_n\leq \nu_n- C_0Tn^{-1/3}+\oP(n^{-1/3}).
 \end{equation} Take $T>0$ such that $\lambda -C_0T <0$. Thus, with high probability, $\bar{\nu}_n<1$. 
 Denote the component corresponding to a randomly chosen vertex from $\bar{\mathcal{G}}$ by $\mathscr{C}^{\sss \geq T}(V_n)$, and the $i^{\sss th}$ largest component of $\bar{\mathcal{G}}$ by $\mathscr{C}_{\sss (i)}^{\sss \geq T}$. 
 Also, let $\bar{\PR}$ denote the probability measure conditioned on $\mathscr{F}_{i_{\sss T}}$, and let $\bar{\E}$ denote the corresponding expectation. 
 Now, for any $\delta >0$,
 \begin{equation}
  \begin{split} 
  &\bar{\PR}\bigg( \sum_{i\geq 1}|\mathscr{C}_{\sss (i)}^{\sss \geq T}|^2>\delta^2 n^{4/3}  \bigg)\leq \frac{1}{\delta^2 n^{4/3}}\sum_{i\geq 1}\bar{\E}\big( |\mathscr{C}^{\sss \geq T}_{\sss(i)}|^2\big)\\
  &\hspace{4.25cm}\leq  \frac{1}{\delta^2 n^{1/3}}\bar{\E}\big( |\mathscr{C}^{\sss \geq T}(V_n)|\big)\leq \frac{1}{\delta^2(-\lambda+C_0T+\oP(1))},
  \end{split}
\end{equation} where the second step follows from the Markov inequality and the last step follows by combining Lemma~\ref{lem:janson-lemma} and \eqref{eq:bar-nu-n}. Noting that $\bar{\nu}_n<1$ with high probability, we get
\begin{equation}
 \limsup_{n\to\infty}\prob{|\mathscr{C}_{\max}^{\sss \geq T}|>\delta n^{2/3}}\leq \frac{C}{\delta^2 T},
\end{equation} for some constant $C>0$ and large $T>0$ and the proof follows.
 \end{proof}
 \begin{theorem}\label{thm:conv-fd-comp}
  The convergence in Theorem~\ref{thm_main} holds with respect to the product topology.
 \end{theorem}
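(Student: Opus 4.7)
My plan is to combine the process-level convergence of $\bar{\mathbf{S}}_n$ to $\mathbf{B}^\lambda_{\mu,\eta}$ from Theorem~\ref{thm_main1} with the tail bound in Lemma~\ref{lem:large-com-explored-early} via a truncation argument.

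First, I would fix $k\geq 1$ and a truncation level $T>0$, and look at the $k$ largest components whose exploration starts by time $Tn^{2/3}$. These correspond exactly, after rescaling by $n^{-2/3}$, to the $k$ largest excursions above past minima of the linearly interpolated walk $\bar{\mathbf{S}}_n$ on $[0,T]$. Let $\Psi_k^T:\mathbb{C}[0,T]\to\mathbb{R}^k$ be the map returning the ordered $k$-vector of lengths of such excursions, padded with zeros if fewer exist. Using classical properties of $\mathbf{B}^\lambda_{\mu,\eta}$---almost surely the excursion lengths are pairwise distinct, their endpoints are points of strict local minima, and $T$ itself is not an excursion endpoint---one verifies that $\Psi_k^T$ is continuous at $\mathbf{B}^\lambda_{\mu,\eta}$ a.s. Since Theorem~\ref{thm_main1} gives $\bar{\mathbf{S}}_n\dto\mathbf{B}^\lambda_{\mu,\eta}$ and the limit is continuous, convergence in $J_1$ upgrades to uniform convergence on $[0,T]$, and the continuous mapping theorem delivers
\[
\Psi_k^T(\bar{\mathbf{S}}_n)\dto \Psi_k^T(\mathbf{B}^\lambda_{\mu,\eta}).
\]

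Second, I would remove the time truncation. On the graph side, Lemma~\ref{lem:large-com-explored-early} says that the largest component whose exploration begins after time $Tn^{2/3}$ has size $\oP(n^{2/3})$ in the iterated limit $T\to\infty$, so the $k$ largest components of $\mathrm{CM}_n(\boldsymbol{d})$ coincide with the $k$ largest among those explored before $Tn^{2/3}$ with probability tending to one. On the limit side, because $\boldsymbol{\gamma}^\lambda\in\ell^2_\shortarrow$ a.s.\ and $B^\lambda_{\mu,\eta}(s)\to -\infty$ a.s., each of the top $k$ excursions has a finite left endpoint, hence $\Psi_k^T(\mathbf{B}^\lambda_{\mu,\eta})$ agrees with $(|\gamma^\lambda_1|,\ldots,|\gamma^\lambda_k|)$ with probability tending to one as $T\to\infty$. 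A routine triangle inequality glues the three steps together and yields joint convergence in distribution of the first $k$ coordinates of $n^{-2/3}(|\mathscr{C}_{\sss (j)}|)_{j\geq 1}$ to the first $k$ coordinates of $\boldsymbol{\gamma}^\lambda$. Since $k$ is arbitrary, this is exactly convergence in the product topology on $\ell^2_\shortarrow$.

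The main obstacle is verifying the continuity of the excursion-extraction map $\Psi_k^T$ at the limit path. The required absence of degeneracies---distinct excursion lengths, strict-minimum endpoints, no excursion pinned to $T$---is standard for Brownian motion with parabolic drift but is the only technical content hidden inside the short argument above. Once it is settled, the rest of the proof is simply a limit interchange between the fixed-$T$ continuous-mapping result and the two negligible truncation tails.
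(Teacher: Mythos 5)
Your proposal is correct and takes essentially the same route as the paper: the authors also combine the process-level convergence of Theorem~\ref{thm_main1} with the truncation bound of Lemma~\ref{lem:large-com-explored-early}, as is spelled out in the discussion immediately preceding the statement, and they leave the continuity of the excursion-extraction map (which you verify via the almost-sure nondegeneracy of $\mathbf{B}^\lambda_{\mu,\eta}$) implicit by appeal to Aldous's framework. The extra care you take with $\Psi_k^T$ is exactly the hidden technical content the paper suppresses; no new ideas beyond theirs are needed.
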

 \begin{proof}
  The proof follows from Theorem~\ref{thm_main1} and Lemma~\ref{lem:large-com-explored-early}.
 \end{proof}
 \subsection{Proof of Theorem~\ref{thm_main}}  \label{sec_l2_tightness}
 The proof of Theorem~\ref{thm_main} follows  using similar argument as \cite[Section 3.3]{A97}. 
 However, the proof is a bit tricky since the components are explored in a size-biased manner with sizes being the total degree in the components (not the component sizes as in \citep{A97}). 
 For a sequence of random variables $\mathbf{Y}=(Y_i)_{i\geq 1}$ satisfying $\sum_{i\geq 1}Y_i^2<\infty$ almost surely, define $\boldsymbol{\xi}:=(\xi_i)_{i\geq 1}$ such that $\xi_i|\mathbf{Y}\sim \mathrm{Exp}(Y_i)$ and the coordinates of $\boldsymbol{\xi}$ are independent conditional on $\mathbf{Y}$.  
 For $a\geq 0$, let $\mathscr{S}(a):=\sum_{\xi_i\leq a}Y_i$. Then the \emph{size biased point process} is defined to be the random collection of points $\Xi:=\{(\mathscr{S}(\xi_i),Y_i)\}_{i\geq 1}$ (see \cite[Section 3.3]{A97}).  
 We will use Lemma 8, Lemma 14 and Proposition 15 from \cite{A97}. 
 Let $\mathfrak{C}:=\{\mathscr{C}: \mathscr{C}\text{ is a component of }\mathrm{CM}_n(\boldsymbol{d})\}$. 
 Consider the collection $\boldsymbol{\xi}:=(\xi(\mathscr{C}))_{\mathscr{C}\in \mathfrak{C}}$ such that conditional on $(\sum_{k\in \mathscr{C}}d_k, |\mathscr{C}|)_{\mathscr{C}\in \mathfrak{C}}$, $\xi(\mathscr{C})$ has an exponential distribution with rate $n^{-2/3}\sum_{k\in \mathscr{C}}d_k$ independently over $\mathscr{C}$. Then the order in which Algorithm~\ref{algo:1} explores the components can be obtained by ordering the components according to their $\xi$-value.  Recall that $\mathscr{C}_i$ denotes the $i^{th}$ explored component by Algorithm~\ref{algo:1} and let $D_i:=\sum_{k\in\mathscr{C}_i}d_k$. Define the size biased point process
 \begin{equation}\Xi_n:=\Big(n^{-2/3} \sum_{j=1}^{i} D_i  , \hspace{.2cm}n^{-2/3} D_i \Big)_{i \geq 1}. 
 \end{equation}  Also define the point processes
 \begin{equation} \Xi_{n}^{'} := \Big( n^{-2/3} \sum_{j=1}^i \big| \mathscr{C}_j \big|, \hspace{.2cm} n^{-2/3} \big| \mathscr{C}_i\big|  \Big)_{i \geq 1}, \quad \Xi_{\infty} := \big\{ \big( l(\gamma), |\gamma| \big):\text{ }\gamma \text{ an excursion of } \mathbf{B}^\lambda_{\mu,\eta} \big\}, 
 \end{equation} where we recall that $l(\gamma)$ are the left endpoints of the excursions of $\mathbf{B}^\lambda_{\mu,\eta}$ and $|\gamma|$ is the length of the excursion $\gamma$ (see \eqref{defn::reflected-BM}). Note that $\Xi_n'$ is not a size biased point process. However, applying \cite[Lemma 8]{A97} and Theorem \ref{thm_main1}, we get $ \Xi_{n}^{'} \xrightarrow{\sss \mathcal{L}} \Xi_{\infty}$.  We claim that 
 \begin{equation}\label{conv:SBPP}
 \Xi_n \xrightarrow{\mathcal{L}} 2 \Xi_{\infty}.
 \end{equation}
 To verify the claim, note that \eqref{lem_eq2}  and Assumption~\ref{assumption1}~\ref{assumption1-3} together imply, for any $t>0$,
 \begin{equation}\label{eqn:deg-close-comp}
  \sup_{u \leq t} \big| n^{-2/3} \sum_{i=1}^{\lfloor un^{2/3} \rfloor} d_{\sss(i)} - \frac{\sigma_2}{\mu}u \big|
  = \sup_{u \leq t} \big| n^{-2/3} \sum_{i=1}^{\lfloor un^{2/3} \rfloor} d_{\sss(i)} - 2u \big| \xrightarrow{\mathbbm{P}} 0,
 \end{equation}since $\sigma_2/\mu =\E[D^2]/\E[D]=2$. 
  Thus, \eqref{conv:SBPP} follows using \eqref{eqn:deg-close-comp}.
 Now, the point process $2 \Xi_{\infty}$ satisfies all the conditions of \cite[Proposition 15]{A97} as shown by Aldous. 
 Thus, \cite[Lemma 14]{A97} gives
  \begin{align} \label{eqn_tightness}
	\big\{ D_{\sss (i)}\big\}_{i \geq 1}\text{ is tight in } \ell^2_{\shortarrow}.
  \end{align} 
This implies that $\big( n^{-2/3} \big| \mathscr{C}_{\sss (i)} \big| \big)_{i \geq 1}$ is tight in $\ell^2_{\shortarrow}$ by simply observing that $|\mathscr{C}_i|\leq \sum_{k\in\mathscr{C}_i}d_k+1$. Therefore, the proof of Theorem~\ref{thm_main} is complete using Theorem~\ref{thm:conv-fd-comp}.
\qed

\subsection{Proof of Theorem~\ref{thm_surplus}} \label{sec_surplus_edges}
The proof of Theorem~\ref{thm_surplus} is completed in two separate lemmas. 
In Lemma~\ref{lem:surp:poisson-conv} we first show that the convergence in Theorem~\ref{thm_surplus} holds with respect to the $\ell^2_{\shortarrow}\times \mathbb{N}^{\infty}$ topology. The tightness of $(\mathbf{Z}_n)_{n\geq 1}$ with respect to the $\mathbb{U}^0_{\shortarrow}$ topology is ensured in Lemma~\ref{sufficient-U0-conv-condn}.

 \begin{lemma} \label{lem:surp:poisson-conv} Let $N_n^\lambda(k)$ be the number of surplus edges discovered up to time $k$ and $\bar{N}^\lambda_n(u) = N_n^\lambda(\lfloor un^{2/3} \rfloor)$. Then, as $n\to\infty$,
 \begin{equation}\bar{\mathbf{N}}_n^\lambda\dto \mathbf{N}^\lambda,
 \end{equation} where $\mathbf{N}^\lambda$ is defined in \eqref{defn::counting-process}.
 \end{lemma}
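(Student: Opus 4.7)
The plan is to apply a martingale/compensator argument to deduce joint convergence of $(\bar{\mathbf{s}}_n,\bar{\mathbf{N}}^\lambda_n)$, from which the statement of the lemma follows. Observe from \eqref{name:c-B-A} that $c_{(j)}=(|\mathcal{B}_{j-1}|+|\mathcal{C}_{j-1}|)/2$ counts exactly the surplus edges (back-edges and self-loops) discovered at stage $j$, so $N^\lambda_n(k)=\sum_{j=1}^{k}c_{(j)}$, and the Doob--Meyer decomposition reads
\[
N_n^\lambda(k)=M_n^N(k)+A_n^N(k),\qquad A_n^N(k):=\sum_{j=1}^{k}\mathbb{E}\bigl[c_{(j)}\,\big|\,\mathscr{F}_{j-1}\bigr].
\]

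First I would identify the limit of the compensator. Lemma~\ref{lem_back_edges} yields $\mathbb{E}[c_{(j)}|\mathscr{F}_{j-1}]=(1+o_{\sss\mathbbm{P}}(1))A_{j-1}/U_{j-1}+O_{\sss\mathbbm{P}}(n^{-2/3})$ uniformly over $j\leq Tn^{2/3}$. Since $U_j/n\to\mu$ uniformly by \eqref{lem_eq2} and the residual error terms (which scale like $d_{(j)}/n$) sum to $o_{\sss\mathbbm{P}}(1)$ thanks to $\sum_{j\leq Tn^{2/3}}d_{(j)}=O_{\sss\mathbbm{P}}(n^{2/3})$ from Lemma~\ref{lem_con_1}, a Riemann-sum approximation gives
\[
\bar A_n^N(u)=\beta\int_0^u \bar{A}_n(s)\,ds + o_{\sss\mathbbm{P}}(1),
\]
where $\bar{A}_n(s):=n^{-1/3}A_{\lfloor sn^{2/3}\rfloor}$ and $\beta=1/\mu$. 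Using the identity $A_k=S_n(k)-\min_{j\leq k}S_n(j)$ together with Lemma~\ref{cor_c_k}, which forces $\bar{\mathbf{S}}_n$ and $\bar{\mathbf{s}}_n$ to differ by $o_{\sss\mathbbm{P}}(1)$ uniformly on compacts, $\bar A_n$ coincides with the reflected version of $\bar{\mathbf{s}}_n$ up to $o_{\sss\mathbbm{P}}(1)$. Hence Proposition~\ref{thm_main2} and the continuity of the reflection map give $\bar A_n\dto W^\lambda$ jointly with $\bar{\mathbf{s}}_n\dto \mathbf{B}^\lambda_{\mu,\eta}$, and consequently $\bar A_n^N(u)\dto \beta\int_0^u W^\lambda(s)\,ds$ jointly with $\bar{\mathbf{s}}_n$.

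Next I would show that, with high probability, $\bar{\mathbf{N}}^\lambda_n$ has only unit jumps. Conditional on $\mathscr{F}_{j-1}$ and $v_{(j)}$, the number of half-edges of $v_{(j)}$ that pair into $\mathcal{A}_{j-1}$ or to each other is stochastically dominated by a $\mathrm{Bin}\bigl(d_{(j)},(A_{j-1}+d_{(j)})/U_{j-1}\bigr)$ variable, so by a second-moment bound $\mathbb{P}(c_{(j)}\geq 2\,|\,\mathscr{F}_{j-1})\leq C d_{(j)}^2 (A_{j-1}^2 + d_{(j)}^2)/U_{j-1}^2$. On the event $\{\sup_{k\leq Tn^{2/3}}A_k\leq Mn^{1/3}\}$, whose probability tends to $1$ as $M\to\infty$ by Theorem~\ref{thm_main1}, summing this over $j\leq Tn^{2/3}$ and invoking $\sum_j d_{(j)}^2=O_{\sss\mathbbm{P}}(n^{2/3})$ from Lemma~\ref{lem_con_1} and $d_{\max}=o(n^{1/3})$ from Lemma~\ref{lem_d_max} yields $\mathbb{P}(\exists\,j\leq Tn^{2/3}:c_{(j)}\geq 2)\to 0$.

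Putting the two pieces together, I would invoke a standard convergence theorem for counting processes with stochastic intensities (cf.\ \cite[Lemma~9 and Proposition~4]{A97}, or Jacod--Shiryaev VIII.4.35): if a sequence of counting processes with asymptotically unit jumps has predictable compensators converging jointly with a driving process to a continuous limit of the form $\int_0^\cdot \lambda(s)\,ds$ with $\lambda$ adapted to the limit filtration, then the counting processes converge jointly to the Cox process with intensity $\lambda$. Applied here with driving process $\bar{\mathbf{s}}_n$ and intensity $\beta W^\lambda$, this gives $(\bar{\mathbf{s}}_n,\bar{\mathbf{N}}_n^\lambda)\dto(\mathbf{B}^\lambda_{\mu,\eta},\mathbf{N}^\lambda)$, from which the stated marginal convergence follows. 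The main obstacle is this last step: one has to verify carefully the hypotheses of the counting-process limit theorem and ensure that the limit is the specific Cox process $\mathbf{N}^\lambda$ built from the same Brownian motion that drives $W^\lambda$; the compensator convergence and the jump-size estimate are precisely the inputs required.
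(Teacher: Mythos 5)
Your argument follows essentially the same route as the paper's: identify the conditional intensity of the surplus counting process via Lemma~\ref{lem_back_edges}, observe that after rescaling it is $\beta\bar{W}_n + o_{\sss\PR}(1)$ with $\bar{W}_n$ the reflection of the exploration walk, invoke Theorem~\ref{thm_main1} (the paper writes ``Theorem~\ref{thm_main}'' but clearly means the exploration-process limit) to get $\bar{\mathbf{W}}_n\dto\mathbf{W}^\lambda$, and then apply a weak-convergence result for counting processes driven by a converging intensity. The paper passes via Skorokhod representation and cites Liptser--Shiryaev; you cite Jacod--Shiryaev/Aldous and phrase it as compensator convergence plus a Cox-process limit theorem. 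These are interchangeable.

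One genuine improvement in your write-up is the explicit verification that, with high probability, $c_{\sss(j)}\leq 1$ for all $j\leq Tn^{2/3}$. As written, $\mathbf{N}_n^\lambda$ is \emph{not} a priori a unit-jump process (a single new vertex can discover several back-edges at once), and the counting-process limit theorem you and the paper both invoke requires asymptotically unit jumps. The paper's phrase ``has conditional intensity'' quietly assumes this; your stochastic-domination bound
\[
\PR\big(c_{\sss(j)}\geq 2 \mid \mathscr{F}_{j-1}\big)\leq C\,\frac{d_{\sss(j)}^2(A_{j-1}^2+d_{\sss(j)}^2)}{U_{j-1}^2},
\]
combined with $\sum_{j\leq Tn^{2/3}}d_{\sss(j)}^2 = O_{\sss\PR}(n^{2/3})$ from Lemma~\ref{lem_con_1}, $d_{\max}=o(n^{1/3})$ from Lemma~\ref{lem_d_max}, and $A_{j-1}=O_{\sss\PR}(n^{1/3})$ on the high-probability event, makes this rigorous. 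Two minor points: (i) it is cleaner to state the binomial/union-bound domination carefully since the pairings are without replacement (negative association makes the bound go the right way, but it's worth a sentence); (ii) the closeness of $\bar{\mathbf{S}}_n$ and $\bar{\mathbf{s}}_n$ that you attribute to Lemma~\ref{cor_c_k} actually comes from the argument that derives Theorem~\ref{thm_main1} from Proposition~\ref{thm_main2} via that lemma (as in \cite[Section 6.2]{Jo10}), not from the lemma alone. Neither issue affects correctness.
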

 \begin{proof}
 Recall the definitions of $a$, $b$, $\mathcal{A}_k$, $\mathcal{B}_k$, $\mathcal{C}_k$, $\mathcal{S}_k$ from Section \ref{exploration}. 
 Recall also that $ A_k:= \big| \mathcal{A}_k \big|$, $ B_k:= \big| \mathcal{B}_k \big|$, $ C_k:= \big| \mathcal{C}_k \big|$, $ U_k:= \big| \mathcal{S}_k \big|$, $c_{(k+1)}:= (\big| \mathcal{B}_{k} \big| +\big| \mathcal{C}_{k} \big|)/2 $ from Section~\ref{exploration}. Notice that $A_k=S_n(k)-\min_{j \leq k} S_n(j)$. 
 From Lemma~\ref{lem_back_edges}, we can conclude that, uniformly over $k\leq un^{2/3}$,
 \begin{equation} \label{eqn_surplus_intensity}
  \mathbbm{E} \big[ c_{(k+1)} \vert \mathscr{F}_k \big] = \frac{A_k}{\mu n}+ O_{\sss\mathbbm{P}}(n^{-1}).
 \end{equation}
 The counting process $\mathbf{N}_n^\lambda$ has conditional intensity (conditioned on $\mathscr{F}_{k -1}$) given by \eqref{eqn_surplus_intensity}. Writing the conditional intensity in  \eqref{eqn_surplus_intensity} in terms of $\bar{\mathbf{S}}_n$, we get that the conditional intensity of the re-scaled process $\bar{\mathbf{N}}^\lambda_n$ is given by 
 \begin{equation} \label{rate:surplus:scaled}
 \frac{1}{\mu} [\bar{S}_n(u)-\min_{\tilde{u} \leq u} \bar{S}_n(\tilde{u})]+ o_{\sss \PR}(1).
 \end{equation} 
 Denote by  $\bar{W}_n(u):=\bar{S}_n(u)-\min_{\tilde{u} \leq u} \bar{S}_n(\tilde{u})$ which is  the reflected version $\bar{\mathbf{S}}_n$. By Theorem~\ref{thm_main},  
 \begin{equation}\bar{\mathbf{W}}_n\dto\mathbf{W}^\lambda,
 \end{equation} where $\mathbf{W}^\lambda$ is defined in \eqref{defn::reflected-BM}. 
 Therefore, we can assume that there exists a probability space such that $\bar{\mathbf{W}}_n\to\mathbf{W}^\lambda$ almost surely. Using \cite[Theorem 1; Chapter 5.3]{LS89}, and the continuity of the sample paths of $\mathbf{W}^\lambda$, we conclude the proof.
\end{proof} 
\begin{lemma}\label{sufficient-U0-conv-condn} The vector $(\mathbf{Z}_n)_{n\geq 1}$ is tight with respect to the $\mathbb{U}^0_{\shortarrow}$ topology.
\end{lemma}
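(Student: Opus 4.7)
The plan is to combine the already-available convergence in the weaker $\ell^2_{\shortarrow}\times\mathbbm{N}^{\infty}$ topology with a uniform tail bound on the cross-product sum. The product-topology convergence $\mathbf{Z}_n(\lambda)\dto\mathbf{Z}(\lambda)$ is in hand: Theorem~\ref{thm_main} controls the $\ell^2_{\shortarrow}$-coordinate, while Lemma~\ref{lem:surp:poisson-conv}, upgraded via Skorohod representation to joint almost-sure convergence $(\bar{\mathbf{W}}_n,\bar{\mathbf{N}}_n^\lambda)\to(\mathbf{W}^\lambda,\mathbf{N}^\lambda)$, identifies the number of surplus edges in the $j^{\text{th}}$ largest component with the number of marks in the corresponding excursion of $\mathbf{W}^\lambda$. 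Inspecting the metric \eqref{defn_U_metric}, the remaining obstruction to tightness in $\mathbb{U}^0_{\shortarrow}$ is the tail control
\begin{equation}\label{tail-control-target}
\lim_{K\to\infty}\limsup_{n\to\infty}\prob{\sum_{i>K}n^{-2/3}|\mathscr{C}_{\sss(i)}|\,\surp{\mathscr{C}_{\sss(i)}}>\epsilon}=0\quad\text{for every }\epsilon>0.
\end{equation}

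To establish \eqref{tail-control-target}, I would fix a large $T>0$ and split components into the \emph{early} ones (exploration starts in $[0,Tn^{2/3}]$) and the \emph{late} ones. Only finitely many early excursions can exceed any prescribed size, so, along the Skorohod coupling, the early contribution $\sum_{i>K:\,\mathscr{C}_{\sss(i)}\text{ early}}n^{-2/3}|\mathscr{C}_{\sss(i)}|\,\surp{\mathscr{C}_{\sss(i)}}$ converges to the corresponding tail of $\mathbf{Z}(\lambda)$, which vanishes as $K\to\infty$ because $\mathbf{Z}(\lambda)\in\mathbb{U}^0_{\shortarrow}$ almost surely. For the late components, Lemma~\ref{lem:large-com-explored-early} guarantees that with high probability each has size at most $\delta(T)n^{2/3}$ with $\delta(T)\downarrow 0$ as $T\to\infty$, producing the crude bound
\begin{equation*}
\sum_{\mathscr{C}\text{ late}}n^{-2/3}|\mathscr{C}|\,\surp{\mathscr{C}}\;\leq\;\delta(T)\sum_{k>Tn^{2/3}}c_{\sss(k)},
\end{equation*}
whose conditional expectation, by \eqref{eqn_surplus_intensity} and the Doob decomposition \eqref{split_up}, is, up to negligible terms, of order $\delta(T)\mu^{-1}\int_{T}^{T_{\max}}\bar{W}_n(u)\,du$, where $T_{\max}$ denotes the rescaled endpoint of the exploration.

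The principal obstacle is showing that $\int_T^{T_{\max}}\bar{W}_n(u)\,du$ stays tight uniformly in $n$ and tends to zero in probability as $T\to\infty$. The cleanest route passes through Lemma~\ref{lem-time-nu-rel}: after time $Tn^{2/3}$ the residual graph is subcritical with a criticality gap of order $Tn^{-1/3}$, which translates to a macroscopic negative drift of order $-T$ for the walk beyond $Tn^{2/3}$, mirroring the parabolic drift of $\mathbf{B}^\lambda_{\mu,\eta}$. A Gronwall-type estimate on the hitting times of $\bar{\mathbf{W}}_n$, combined with the $\ell^2_{\shortarrow}$-tightness of excursion lengths coming from Theorem~\ref{thm_main} and the martingale bound of Lemma~\ref{lem_back_edges}, then yields $\int_T^{T_{\max}}\bar{W}_n(u)\,du=\OP(1)$ uniformly in $n$ and $\oP(1)$ as $T\to\infty$. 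Pairing this with $\delta(T)\to 0$ closes \eqref{tail-control-target} and gives the required tightness in $\mathbb{U}^0_{\shortarrow}$.
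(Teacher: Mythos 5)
Your opening moves are sound and match the paper's framing: product-topology convergence follows from Lemma~\ref{lem:surp:poisson-conv} together with Theorem~\ref{thm_main}, and the remaining obstruction is the tail control of $\sum Y_i^n N_i^n$ over small components, which the paper states as \eqref{eqn_sufficient_for_U_0_convergence}; your early/late split via Lemma~\ref{lem:large-com-explored-early} also appears in the paper's $\lambda>0$ argument. The gap is in the late-component estimate. Your bound $\sum_{\mathscr{C}\text{ late}}n^{-2/3}|\mathscr{C}|\,\surp{\mathscr{C}}\leq\delta(T)\sum_{k>Tn^{2/3}}c_{\sss(k)}$ needs the right-hand factor to be $\OP(1)$ uniformly in $n$, and you try to establish this by showing that $\int_T^{T_{\max}}\bar{W}_n(u)\,du$ is tight. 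But that integral diverges logarithmically: past rescaled time $u$, Lemma~\ref{lem-time-nu-rel} makes the residual graph subcritical with gap of order $un^{-1/3}$, so $\bar{W}_n(u)$ hovers at height $\Theta(1/u)$, and $\int_T^{n^{1/3}}\bar{W}_n(u)\,du\asymp\log(n^{1/3}/T)\to\infty$ as $n\to\infty$ for any fixed $T$. Equivalently, $\sum_{k>Tn^{2/3}}c_{\sss(k)}$ is essentially the total surplus of the graph, whose expectation is of order $\log n$ (the expected number of cycles of length $k$ is $\approx 1/(2k)$ for $k\lesssim n^{2/3}$). Your Gronwall/$\ell^2_{\shortarrow}$-tightness route cannot close this: $\ell^2_{\shortarrow}$-tightness controls $\sum_i(\text{excursion length})^2$, not the $\sum_i(\text{excursion length})^{3/2}$-type quantity that the integral represents.

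The missing idea is precisely the paper's Lemma~\ref{lem_surplus_delta_bound}: a \emph{joint} tail estimate $\PR(\surp{\mathscr{C}(V_n)}\geq K,\ |\mathscr{C}(V_n)|\in(\delta_Kn^{2/3},2\delta_Kn^{2/3}))\leq C\sqrt{\delta}/(n^{1/3}K^{1.1})$. The extra $K^{-1.1}$ decay encodes an anti-correlation — components with many surplus edges are unlikely to be small — and that cancellation is exactly what your estimate (bound $|\mathscr{C}|$ by $\delta(T)n^{2/3}$, then multiply by the \emph{total} late surplus) throws away. With this bound, the paper writes the quantity as $n^{1/3}\E[\surp{\mathscr{C}(V_n)}\1_{\{|\mathscr{C}(V_n)|\leq\varepsilon n^{2/3}\}}]$, decomposes it over dyadic size windows and surplus counts, and obtains $O(\sqrt{\varepsilon})$ for $\lambda<0$ via Markov's inequality, then reduces $\lambda>0$ to that case by Cauchy--Schwarz on the components explored before $Tn^{2/3}$ plus the subcritical residual graph afterwards. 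To make your approach work you would need an estimate of this joint-tail type, which the paper derives by running a supermartingale exploration of $\mathscr{C}(V_n)$, applying optional stopping at a level $H=n^{1/3}K^{1.1}/\sqrt{\delta}$, and inductively bounding the probability of $K$ surplus events while the walk stays below $H$; there is no way to substitute this with a crude integral bound on $\bar{W}_n$.
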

The proof of Lemma~\ref{sufficient-U0-conv-condn} makes use of the following crucial estimate of the probability that a component with small size has very large number of surplus edges:
\begin{lemma} \label{lem_surplus_delta_bound}
Assume that $\lambda <0.$ Let $V_n$ denote a vertex chosen uniformly at random, independent of the graph $\mathrm{CM}_n(\boldsymbol{d})$ and let $\mathscr{C}(V_n)$ denote the component containing $V_n$.  Let $\delta_k=\delta k^{-0.12}$. Then, for $\delta > 0$ (small),
\begin{equation}
 \prob{\surp{\mathscr{C}(V_n)}\geq K,|\mathscr{C}(V_n)|\in (\delta_K n^{2/3},2\delta_Kn^{2/3})}\leq \frac{C\sqrt{\delta}}{n^{1/3}K^{1.1}},
\end{equation}
 where $C$ is a fixed constant independent of $n,\delta, K$.
\end{lemma}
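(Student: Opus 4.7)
The plan is to decompose the event as $\{\surp(\mathscr{C}(V_n))\geq K\}\cap\mathcal{A}_K$, where $\mathcal{A}_K:=\{|\mathscr{C}(V_n)|\in(\delta_K n^{2/3},2\delta_K n^{2/3})\}$, bound $\PR(\mathcal{A}_K)$ using subcriticality of the graph, bound $\E[\surp^2\cdot\mathbbm{1}_{\mathcal{A}_K}]$ by running an exploration started at $V_n$, and finally apply Markov's inequality to $\surp^2$. Since $\lambda<0$ gives $1-\nu_n\sim|\lambda|n^{-1/3}$, Lemma~\ref{lem:janson-lemma} yields $\E[|\mathscr{C}(V_n)|]\leq 1+\E[D_n]/(1-\nu_n)=O(n^{1/3})$, and Markov's inequality then gives $\PR(\mathcal{A}_K)\leq C/(\delta_K n^{1/3})$ (the $|\lambda|$--dependence is absorbed into $C$).

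For the conditional second moment I run a BFS exploration of $\mathscr{C}(V_n)$ starting from $V_n$, letting $A_k$ denote the number of active half-edges after $k$ pairings and $E$ the number of pairings completing the component, so $E=|\mathscr{C}(V_n)|-1+\surp$. An analogue of Lemma~\ref{lem_back_edges} shows that the conditional probability of a surplus edge at step $k+1$ equals $A_k/(\mu n)+O(1/n)$, so conditional on the walk $(A_j)_{j\leq E}$ the surplus $\surp$ is a sum of independent Bernoullis with parameter sum $\Lambda:=(\mu n)^{-1}\sum_{k<E}A_k+O(E/n)$, giving $\E[\surp^2\mid (A_j)]\leq \Lambda+\Lambda^2$. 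Conditioning further on $|\mathscr{C}(V_n)|=m$, the trajectory $(A_k)_{k\leq E}$ is an excursion of length $\approx m$; using the finite-variance step distribution from Assumption~\ref{assumption1} and $d_{\max}=o(n^{1/3})$ from Lemma~\ref{lem_d_max}, standard random-walk estimates for excursion maxima yield $\E[\max_{k\leq E}A_k\mid|\mathscr{C}(V_n)|=m]\leq C\sqrt{m}$, and hence $\E[\Lambda\mid|\mathscr{C}(V_n)|=m]\leq Cm^{3/2}/n$ and $\E[\Lambda^2\mid|\mathscr{C}(V_n)|=m]\leq Cm^3/n^2$. Restricting to $m\in(\delta_K n^{2/3},2\delta_K n^{2/3})$ and noting $\delta_K\leq 1$, the first term dominates and $\E[\surp^2\mid\mathcal{A}_K]\leq C\delta_K^{3/2}$, whence
\[
\E[\surp^2\cdot\mathbbm{1}_{\mathcal{A}_K}]\leq C\delta_K^{3/2}\,\PR(\mathcal{A}_K)\leq \frac{C\delta_K^{1/2}}{n^{1/3}}=\frac{C\sqrt{\delta}\,K^{-0.06}}{n^{1/3}}.
\]

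Markov's inequality applied to $\surp^2$ now gives
\[
\PR(\surp\geq K,\,\mathcal{A}_K)=\PR(\surp^2\geq K^2,\,\mathcal{A}_K)\leq \frac{\E[\surp^2\cdot\mathbbm{1}_{\mathcal{A}_K}]}{K^2}\leq \frac{C\sqrt{\delta}}{n^{1/3}\,K^{2.06}}\leq \frac{C\sqrt{\delta}}{n^{1/3}\,K^{1.1}},
\]
the last inequality holding for every $K\geq 1$ since $K^{2.06}\geq K^{1.1}$. The main technical difficulty lies in the excursion-maximum estimate $\E[\max_{k\leq E}A_k\mid|\mathscr{C}(V_n)|=m]\leq C\sqrt{m}$: the exploration walk is being conditioned on the atypical event that its excursion length lies in the narrow window $m\in(\delta_K n^{2/3},2\delta_K n^{2/3})$, and it carries a small negative drift of order $\lambda n^{-1/3}$ per step, so the bound must combine the martingale estimates of Lemma~\ref{lem_back_edges} with the invariance-principle machinery already developed in the proof of Theorem~\ref{thm_main1} to transfer Brownian-excursion maximum bounds to the discrete walk.
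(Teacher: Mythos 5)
Your plan takes a genuinely different route from the paper, and it has two gaps that I do not think are easily repaired without essentially reverting to the paper's argument.

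The paper's proof never conditions on the component size $|\mathscr{C}(V_n)|=m$. Instead it observes that for $\lambda<0$ the exploration walk $s_n'$ is a super-martingale (see \eqref{exploration:super_martingale}), applies optional stopping at $\gamma$ with the threshold $H=n^{1/3}K^{1.1}/\sqrt{\delta}$ to obtain $\PR(s_n'(\gamma)\geq H)\leq \E[D_n]/H$, and then, on the complementary event that the walk stays below $H$, bounds the probability of $K$ surplus edges at prescribed times $l_1<\dots<l_K$ by $(CK^{1.1}/(n^{2/3}\sqrt\delta))^K$ and sums over positions using Stirling. The $\sqrt\delta/K^{1.1}$ factor comes directly from the choice of $H$ and that combinatorial sum, not from a second-moment/Markov argument.

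Your proposal instead tries to bound $\E[\surp^2\cdot\1_{\mathcal{A}_K}]$ via conditional excursion estimates. Two concrete problems arise. First, the step ``$\E[\surp^2\mid|\mathscr{C}(V_n)|=m]\leq\E[\Lambda\mid m]+\E[\Lambda^2\mid m]$'' treats the surplus process as a counting process with adapted intensity $\approx A_k/(\mu n)$ and applies the usual martingale identity for its second moment; but once you condition on the exit time $|\mathscr{C}(V_n)|=m$, the walk is no longer Markov with the original transition law, the intensity of the surplus process changes under the conditioning, and that martingale identity is not available. (Note also that $A_k=s_n'(k)$ is itself a deterministic function of the surplus history, since a surplus event decrements $s_n'$ by exactly $2$; so ``conditioning on the trajectory $(A_j)$'' does not leave the surplus edges as independent Bernoullis as you state.) Second, the estimates $\E[\max_{k\leq E}A_k\mid m]\leq C\sqrt m$ and (needed for $\E[\Lambda^2\mid m]$) $\E[(\max_{k\leq E}A_k)^2\mid m]\leq Cm$ are the technical heart of your argument but are only asserted. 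They are not corollaries of anything proved in the paper; the walk has a drift of order $n^{-1/3}$, the steps are size-biased-without-replacement rather than i.i.d., the maximum explored degree can be of order $o(n^{1/3})=o(\sqrt m)$ only in probability under Assumption~\ref{assumption1}, and the conditioning is on a narrow window of excursion lengths. You yourself flag this as ``the main technical difficulty'' and defer to ``standard random-walk estimates'' and ``invariance-principle machinery,'' but this is precisely where a proof is needed; the paper's super-martingale/optional-stopping device is designed to sidestep exactly this conditional-excursion computation, replacing it with the unconditional tail bound $\PR(\max_{l\leq\gamma}s_n'(l)\geq H)\leq\E[D_n]/H$ from \eqref{eqn::bound_geq_H_at_stopping_time}.

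Your preliminary bound $\PR(\mathcal{A}_K)\leq C/(\delta_K n^{1/3})$ via Lemma~\ref{lem:janson-lemma} and Markov is fine, and the final arithmetic (two applications of Markov to reach $C\sqrt\delta/(n^{1/3}K^{1.1})$) is consistent. But as written the proof is not complete: the conditional moment bound and the excursion-maximum estimates would need to be established, and it is not clear they can be without substantially more work than the paper's shorter argument.
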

\begin{proof}[Proof of Lemma~\ref{sufficient-U0-conv-condn}]
To simplify the notation, we write $Y_i^n=n^{-2/3} |\mathscr{C}_{\sss (i)}|$ and $N_i^n=$\# $\{$surplus edges in $\mathscr{C}_{\sss(i)}\}$. Let $Y_i$, $N_i$ denote the distributional limits of $Y_i^n$ and $N_i^n$ respectively. Recall from Remark~\ref{defn_U_0_process} that $\mathbf{Z}(\lambda)$ is almost surely $\mathbb{U}^0_{\shortarrow}$-valued. Using Lemma~\ref{lem:surp:poisson-conv} and the definition of $d_{\mathbb{U}}$ from \eqref{defn_U_metric}, the proof of Lemma~\ref{sufficient-U0-conv-condn} is complete if we can show that, for any $\eta >0$
 \begin{equation} \label{eqn_sufficient_for_U_0_convergence}
 \lim_{\varepsilon\to 0}\limsup_{n\to\infty}\PR\bigg( \sum_{Y_i^n\leq \varepsilon} Y_i^n N_i^n> \eta \bigg)=0.
 \end{equation} 
First, consider the case $\lambda <0$. For every $\eta,\varepsilon >0$ sufficiently small 
\begin{align}
  &\PR\bigg( \sum_{Y_i^n\leq \varepsilon} Y_i^n N_i^n> \eta \bigg)\leq \frac{1}{\eta}\E \bigg[\sum_{i=1}^{\infty}Y_i^n N_i^n \1_{\{ Y_i^n\leq \varepsilon\}} \bigg]= \frac{n^{-2/3}}{\eta} \E \bigg[\sum_{i=1}^{\infty}|\mathscr{C}_{\sss(i)}| N_i^n \1_{\{ |\mathscr{C}_{\sss(i)}|\leq \varepsilon n^{2/3}\}} \bigg]\nonumber\\
  &= \frac{n^{1/3}}{\eta}\expt{\mathrm{SP}(\mathscr{C}(V_n))\1_{\{ |\mathscr{C}(V_n)|\leq \varepsilon n^{2/3}\}}}\nonumber\\
  &= \frac{n^{1/3}}{\eta}\sum_{k=1}^{\infty}\sum_{i\geq \log_2(1/(k^{0.12}\varepsilon))}\PR\bigg(\mathrm{SP}(\mathscr{C}(V_n))\geq k, |\mathscr{C}(V_n)|\in \bigg(\frac{n^{2/3}}{2^{i+1}k^{0.12}},\frac{n^{2/3}}{2^{i}k^{0.12}} \bigg] \bigg)\nonumber\\
  &\leq \frac{C}{\eta} \sum_{k=1}^{\infty}\frac{1}{k^{1.1}}\sum_{i\geq \log_2(1/(k^{0.12}\varepsilon))} 2^{-(1/2)i} \leq \frac{C}{\eta}\sum_{k=1}^{\infty}\frac{\sqrt{\varepsilon}}{k^{1.04}}  =O(\sqrt{\varepsilon}),
 \end{align}
  where we have used Lemma~\ref{lem_surplus_delta_bound}. Therefore, \eqref{eqn_sufficient_for_U_0_convergence} holds when $\lambda <0$. Now consider the case $\lambda >0$.  For $T>0$ (large), let \begin{equation}
 \mathcal{K}_n:=\{i: Y_i^n\leq \varepsilon, \mathscr{C}_{\sss (i)} \text{ is explored before }Tn^{2/3}\}.
\end{equation}Then, by applying the Cauchy-Schwarz inequality,
\begin{equation}\label{eq:SP-C-T}
\begin{split}
\sum_{i\in \mathcal{K}_n}Y_i^nN_i^n&\leq \Big( \sum_{i\in \mathcal{K}_n}(Y_i^n)^2\Big)^{1/2}\times  \Big( \sum_{i\in \mathcal{K}_n}(N_i^n)^2\Big)^{1/2}\\
&\leq \Big( \sum_{i\in \mathcal{K}_n}(Y_i^n)^2\Big)^{1/2}\times (\# \text{ surplus edges  explored before }Tn^{2/3})
\end{split}
\end{equation}
For the case $\lambda >0$, we can use similar ideas as the proof of Lemma~\ref{lem:large-com-explored-early}, i.e., we can run the exploration process till $Tn^{2/3}$ and the unexplored graph becomes a configuration model with negative criticality parameter for large $T>0$, by \eqref{nu-n-i:nu-n:relation}.  Thus, the proof can be completed using \eqref{eq:SP-C-T}, the $\ell^{2}_{\shortarrow}$ convergence of the component sizes given by Theorem~\ref{thm_main} and Lemma~\ref{lem:surp:poisson-conv}, and the proof for the case $\lambda<0$.
\end{proof}
\begin{proof}[Proof of Lemma~\ref{lem_surplus_delta_bound}] 
To complete the proof of Lemma~\ref{lem_surplus_delta_bound}, we will use martingale techniques coupled with Lemma~\ref{lem:janson-lemma}. Fix $\delta > 0$ (small). First we describe another way of exploring $\mathscr{C}(V_n)$ which turns out to be convenient to work with.
 \begin{algo}[Exploring $\mathscr{C}(V_n)$]\label{algo:2}\normalfont Consider the following exploration of $\mathscr{C}(V_n)$: \begin{itemize}
 \item[(S0)] Initialize all half-edges to be alive. Choose a vertex from $[n]$ uniformly at random and declare all its half-edges active.
 \item[(S1)] In the next step, take any active half-edge and pair it uniformly with another alive half-edge. Kill these paired half-edges. Declare all the half-edges corresponding to the new vertex (if any) active. Keep repeating (S1) until the set of active half-edges is empty.
\end{itemize}
\end{algo}Unlike Algorithm~\ref{algo:1}, we need not see a new vertex at each stage and we explore only two half-edges at each stage. In this proof, $\mathscr{F}_l$ denotes the sigma-field containing information revealed up to stage~$l$ by Algorithm~\ref{algo:2} and $\mathscr{V}_l$ denotes the vertex set discovered up to time $l$. Recall that we denote by $D_n$ the degree of $V_n$. Define the exploration process~$\mathbf{s}_n'$ by,
\begin{equation}
 s_n'(0)=D_n,\ s_n'(l)= \sum_{i\in [n]} d_i\mathcal{I}_i^n(l)-2l,
\end{equation} where $\mathcal{I}_i^n(l)= \ind{i\in \mathscr{V}_l}$. 
Therefore, $s_n'(l)$ counts the number of active half-edges at time $l$, until $\mathscr{C}(V_n)$ is explored. Note that $\mathscr{C}(V_n)$ is explored when $\mathbf{s}'_n$ hits zero and the hitting time to zero gives the number of edges in $\mathscr{C}(V_n)$, since exactly one edge is being explored at each time step.
We will use a generic constant $C$ to denote a positive constant that can be different in different equations.  For $H>0$,  let \begin{equation} \label{defn:gamma}
\gamma := \inf \{ l\geq 1: s_n'(l)\geq H \text{ or }  s_n'(l)= 0 \}\wedge 2\delta n^{2/3}.
\end{equation} Note that
\begin{equation}\label{exploration:super_martingale}
\begin{split}
 \expt{s_n'(l+1)-s_n'(l)\vert \mathscr{F}_l}&= \sum_{i\in [n]}d_i\prob{i\in \mathscr{V}_{l+1}\vert \mathscr{F}_l,\mathcal{I}_i^n(l) = 0} -2\\
 &= \frac{ \sum_{i\notin \mathscr{V}_l}d_i^2}{\ell_n-2l-1}-2\leq \frac{ \sum_{i\in [n]}d_i^2}{\ell_n-2l-1}-2\\
 & =\frac{\lambda}{n^{1/3}}+o(n^{-1/3})+\frac{2l+1}{\ell_n-2l-1}\times \frac{\sum_{i\in [n]}d_i^2}{\ell_n}   \leq 0
\end{split}
\end{equation} uniformly over $l\leq 2\delta n^{2/3}$ for all small $\delta >0$ and large $n$, where the last step follows from the fact that $\lambda<0$. Therefore, $\{s_n'(l)\}_{l= 1}^{2\delta n^{2/3}}$ is a super-martingale. The optional stopping theorem now implies
  \begin{equation}
   \mathbbm{E}\left[D_n\right] \geq \mathbbm{E}\left[s_n'(\gamma)\right] \geq H \mathbbm{P}\left( s'_n(\gamma) \geq H \right).
  \end{equation} Thus,
  \begin{equation} \label{eqn::bound_geq_H_at_stopping_time}
    \mathbbm{P}\left( s'_n(\gamma) \geq H \right) \leq \frac{\expt{D_n}}{H}.
  \end{equation}
We put $H=n^{1/3}K^{1.1}/\sqrt{\delta}$. To simplify the notation, we write $s_n'[0,t]\in A$ to denote that $s_n'(l)\in A,$ for all $ l\in [0,t]$.  Notice that, for $K\geq 1$,
 \begin{equation}\label{surp:sup:less}\begin{split}
  &\prob{\surp{\mathscr{C}(V_n)}\geq K,|\mathscr{C}(V_n)|\in (\delta_K n^{2/3},2\delta_Kn^{2/3})}\\
  &\leq \prob{s_n'(\gamma)\geq H}+\prob{\surp{\mathscr{C}(V_n)}\geq K, s_n'[0,2\delta_K n^{2/3}]< H, s_n'[0,\delta_K n^{2/3}]>0}.
  \end{split}
 \end{equation}
 Here we have used the fact that if there is at least one surplus edge in $\mathscr{C}(V_n)$, the number of edges in $\mathscr{C}(V_n)$ is at least $\mathscr{C}(V_n)$. Therefore, $|\mathscr{C}(V_n)|>\delta_Kn^{2/3}$ implies $s_n'[0,\delta_K n^{2/3}]>0$.
 Let us denote the event that surplus edges appear at times  $l_1,\dots,l_K$,  $s_n'[0,2\delta_K n^{2/3}]< H$, and $s_n'[0,\delta_K n^{2/3}]>0$ by $\mathrm{SPB}(l_1,\dots,l_K)$.
   Now,
 \begin{equation}
 \begin{split}
   &\prob{\surp{\mathscr{C}(V_n)}\geq K, s_n'[0,2\delta_K n^{2/3}]< H, s_n'[0,\delta_K n^{2/3}]>0}\\
  &\hspace{.5cm}\leq \sum_{1\leq l_1<\dots< l_K\leq 2\delta_K n^{2/3}} \prob{\mathrm{SPB}(l_1,\dots,l_K)}\\
  &\hspace{.5cm}=\sum_{1\leq l_1<\dots<l_K\leq 2\delta_K n^{2/3}}\expt{\ind{0<s_n'[0,l_K-1]<H, \mathbf{SP}(l_K-1)=K-1}Y},
 \end{split}
 \end{equation}
 where
 \begin{align}
  Y&=\prob{K^{th}\text{ surplus occurs at time }l_K,  s_n'[l_K,2\delta_K n^{2/3}]< H, s_n'[l_K,\gamma]>0\mid \mathscr{F}_{l_K-1} }\nonumber\\
  &\leq \frac{CK^{1.1}n^{1/3}}{\ell_n\sqrt{\delta}}\leq \frac{CK^{1.1}}{n^{2/3}\sqrt{\delta}}.
 \end{align}
 Therefore, using induction, 
 \begin{equation}\label{exploration:bounded:surplus}
 \begin{split}
  &\prob{\surp{\mathscr{C}(V_n)}\geq K, s_n'[0,2\delta_K n^{2/3}]< H, s_n'[0,\delta_K n^{2/3}]>0}\\
  &\hspace{1cm}\leq C\bigg( \frac{K^{1.1}}{\sqrt{\delta}n^{2/3}}\bigg)^K\frac{(2\delta n^{2/3})^{K-1}}{K^{0.12(K-1)}(K-1)!}\sum_{l_1=1}^{2\delta_K n^{2/3}}\prob{|\mathscr{C}(V_n)|\geq l_1}\\
  &\hspace{1cm}\leq C \frac{\delta^{K/2}}{K^{1.1}n^{2/3}}  \expt{|\mathscr{C}(V_n)|},
  \end{split}
 \end{equation}where we have used the fact that $\#\{1\leq l_2<\dots<l_k\leq 2\delta n^{2/3}\}\leq(2\delta n^{2/3})^{K-1}/(K-1)!$ and have used the Stirling approximation for $(K-1)!$ in the last step. Since $\lambda <0$, we can use Lemma~\ref{lem:janson-lemma} to conclude that for all sufficiently large $n$
 \begin{equation} \label{expectation:random:vert:comp}
  \expt{|\mathscr{C}(V_n)|}\leq Cn^{1/3},
 \end{equation} for some constant $C>0$ and we get the desired bound for \eqref{surp:sup:less}.
  The proof of Lemma~\ref{lem_surplus_delta_bound} is now complete by applying \eqref{eqn::bound_geq_H_at_stopping_time}~and~\eqref{exploration:bounded:surplus} in \eqref{surp:sup:less}.
\end{proof}

\section{Vertices of degree $k$} \label{sec_vertex}
In this section, we compute the number of vertices of degree $k$ in each connected component at criticality. This will be useful in  Section~\ref{sec_percolation} and \ref{sec_multidimensional}. Such an estimate was proved in \cite[Theorem 2.4]{JL09} for supercritical graphs under stronger moment assumptions.
\begin{lemma}\label{lem:sec:vertex} Denote by $N_k(t)$ the number of vertices of degree $k$ discovered up to time $t$. For any $t>0$, uniformly over $k$,
  \begin{equation}
   \sup\limits_{u \leq t} \big| n^{-2/3} N_k(un^{2/3})-\frac{kn_k}{\ell_n} u \big| =  O_{\sss\mathbbm{P}}((kn^{1/3})^{-1}).
  \end{equation}
 \end{lemma}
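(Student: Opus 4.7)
The plan is to apply the Doob-Meyer decomposition to the counting process $N_k$ and control both the compensator and the martingale uniformly in $k$, with the third moment bound $\sum_{j\in[n]}j^3n_j=n\sigma_3(n)=O(n)$ providing the summability that converts pointwise estimates into uniform ones. Let $X_i=\ind{d_{\sss(i)}=k}$ and write $N_k(t)=M_k(t)+A_k(t)$, where $A_k(t)=\sum_{i\leq t}p_i$ with
\begin{equation*}
p_i:=\expt{X_i\,\vert\,\mathscr{F}_{i-1}}=\frac{k(n_k-N_k(i-1))}{\ell_n-D_{i-1}},\quad D_{i-1}:=\sum_{j<i}d_{\sss(j)},
\end{equation*}
and $M_k$ is a martingale with $|M_k(i)-M_k(i-1)|\leq 1$.

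For the drift, I would use the algebraic identity
\begin{equation*}
p_i-\frac{kn_k}{\ell_n}=\frac{kn_k D_{i-1}}{\ell_n(\ell_n-D_{i-1})}-\frac{kN_k(i-1)}{\ell_n-D_{i-1}}.
\end{equation*}
The first term, summed up to $un^{2/3}$, is $\OP(kn_k/n^{2/3})$: indeed $\sum_{i\leq un^{2/3}}D_{i-1}\leq un^{2/3}\cdot D_{\lfloor un^{2/3}\rfloor}=\OP(n^{4/3})$ by Lemma~\ref{lem_con_1}, and $\ell_n-D_{i-1}\geq cn$ with high probability (as in the proof of Lemma~\ref{lem_back_edges}). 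Dividing by $n^{2/3}$ gives $\OP(kn_k/n^{4/3})\leq \OP(1/(k^2n^{1/3}))$ via $k^3n_k=O(n)$. The second term contains $N_k$ itself, so I would control its expectation rather than try to bound it pathwise: the crude estimate $\expt{N_k(i-1)}\leq Ckn_k(i-1)/\ell_n$ yields expected value $O(k^2n_k/n^{2/3})$ at time $un^{2/3}$, and Markov's inequality then gives
\begin{equation*}
\prob{\tfrac{k}{\ell_n-D_{i-1}}\textstyle\sum_{i\leq un^{2/3}} N_k(i-1)>\tfrac{Kn^{1/3}}{k}}\leq \frac{Ck^3n_k}{Kn},
\end{equation*}
which is summable in $k$ to $O(1/K)$.

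For the martingale, Doob's $L^2$ maximal inequality combined with $\mathrm{Var}(M_k(un^{2/3}))\leq \expt{N_k(un^{2/3})}\leq Ckn_kun^{2/3}/\ell_n$ gives
\begin{equation*}
\prob{\sup_{s\leq un^{2/3}}|M_k(s)|>\tfrac{Kn^{1/3}}{k}}\leq \frac{Ck^3n_ku}{K^2\ell_n},
\end{equation*}
again summable in $k$ to $O(u/K^2)$. Combining the drift and martingale estimates, and noting that $\sup_{u\leq t}|A_k(un^{2/3})-kn_kun^{2/3}/\ell_n|$ is dominated by the monotone sum $\sum_{i\leq tn^{2/3}}|p_i-kn_k/\ell_n|$ (so that the supremum in $u$ is automatic), yields the claimed $\OP((kn^{1/3})^{-1})$ bound uniformly in $k$ after dividing by $n^{2/3}$.

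The main obstacle is uniformity in $k$. For each fixed $k$, a standard martingale argument with a bootstrap on $N_k$ would suffice, but to obtain a bound that is uniform in $k$ one must bypass the bootstrap and instead use Markov's inequality at the level of expectations; the summability $\sum_kk^3n_k=O(n)$ coming from the finite third moment assumption is precisely what makes all the resulting tail estimates sum to a quantity that vanishes uniformly as the tail parameter $K\to\infty$.
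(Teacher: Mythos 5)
Your proof is correct and takes essentially the same route as the paper: the paper's argument is to apply (and quantitatively re-run) the martingale/size-biased-reordering argument of Lemma~\ref{lem_general} (i.e.\ \cite[Lemma~8.2]{BSW14}) with $w_i=\mathbbm{1}_{\{d_i=k\}}$, which is precisely the Doob--Meyer decomposition of the counting process $N_k$ that you carry out explicitly. Your splitting of the compensator via the identity $p_i-kn_k/\ell_n = kn_kD_{i-1}/(\ell_n(\ell_n-D_{i-1})) - kN_k(i-1)/(\ell_n-D_{i-1})$, together with Markov on the second piece and Doob's $L^2$ maximal inequality plus Chebyshev on the martingale, is the content of the citation. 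One thing you do more carefully than the paper is the union bound needed for the ``uniformly over $k$'' statement: the paper's displayed bound contains the term $\sqrt{sk^3 r_k/\mathbbm{E}[D]}$, whose sum over $k$ is not controlled under only a finite third moment, whereas your route avoids the square root entirely (you use $L^2$ throughout), so every tail estimate you obtain has the form $Ck^3n_k/(Kn)$ or $Ck^3n_ku/(K^2\ell_n)$, and these are summable precisely by $\sum_k k^3n_k = n\sigma_3(n)=O(n)$. Two cosmetic remarks: in the displayed Markov bound on the drift, the index $i$ appears both inside and outside the sum (it should read $\sum_{i\leq un^{2/3}} kN_k(i-1)/(\ell_n - D_{i-1})$), and the bound $\ell_n - D_{i-1}\geq cn$ for $i\leq tn^{2/3}$ is in fact deterministic for $n$ large, since $D_{i-1}\leq tn^{2/3}d_{\max}=o(n)$ by $d_{\max}=o(n^{1/3})$; neither affects the validity of the argument.
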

 \begin{proof}
  By setting $w_i=\mathbbm{1}_{ \{ d_i=k \} }$ in Lemma~\ref{lem_general}  we can directly conclude that 
  \begin{equation} \sup\limits_{u \leq t} \big| n^{-2/3} N_k(un^{2/3})-\frac{kn_k}{\ell_n} u \big|\xrightarrow{\mathbbm{P}}0.
  \end{equation} However, one can repeat the same arguments leading to the proof of Lemma~\ref{lem_general} and obtain that
  \begin{equation} \label{lem_martingale_eqn}
  \mathbbm{P} \Big( \sup\limits_{u \leq t} \Big| n^{-2/3} N_k(un^{2/3})-\frac{kn_k}{\ell_n} u \Big| > \frac{A}{kn^{1/3}} \Big)  \leq \frac{3\Big( k^3 s^2 \frac{r_k}{( \mathbbm{E}[D] )^2 } + \sqrt{  s \frac{k^3r_k}{\mathbbm{E}[D]}} \Big)}{A}+ o(1).
  \end{equation} Now, we can use the finite third-moment assumption to conclude that the numerator in the right hand side can be taken to be uniform over $k$. Thus, the proof follows.
 \end{proof}
  Define $v_k(G):=$ the number of vertices of degree $k$ in the connected graph $G$.
As a corollary to Lemma~\ref{lem:sec:vertex} and  \eqref{large-com-explored-early}, we can deduce  that
  \begin{equation}\label{eqn_vertices_of_degree_k-ord}
   v_k \big( \mathscr{C}_{\sss(j)} \big) = \frac{kr_k}{\mathbbm{E}[D]} \big| \mathscr{C}_{\sss (j)} \big| +O_{\sss\mathbbm{P}}\big((k^{-1}n^{1/3})\big).
  \end{equation}Moreover, the following also holds: Let $\mathrm{ord}(\boldsymbol{x})$ denote the vector with elements of $\boldsymbol{x}$ ordered in a non-increasing manner. 
\begin{lemma} \label{vertices_of_degree_k_ord}
For each $k\geq 1$ denote by $\mathbf{V}_k^n:=(n^{-2/3}v_k ( \mathscr{C}_j ))_{j\geq 1}$. Then, $\{\mathrm{ord}(\mathbf{V}_k^n)\}_{n\geq 1}$ is tight in $\ell^2_{\shortarrow}$.
\end{lemma}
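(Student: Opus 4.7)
The plan is to reduce the lemma to the already-established $\ell^2_{\shortarrow}$-tightness of the ordered component-size vector from Theorem~\ref{thm_main}, exploiting the trivial componentwise domination $v_k(\mathscr{C}) \leq |\mathscr{C}|$ valid for every connected component $\mathscr{C}$.

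The first step is an elementary monotonicity property of decreasing rearrangement: if $(a_i)_{i\geq 1}$ and $(b_i)_{i\geq 1}$ are non-negative sequences with $a_i \leq b_i$ for every $i$, then $\mathrm{ord}(\boldsymbol{a})_j \leq \mathrm{ord}(\boldsymbol{b})_j$ for every $j \geq 1$ (this is immediate from the identity $\mathrm{ord}(\boldsymbol{a})_j = \inf_{|I|=j-1}\sup_{i\notin I} a_i$). Applying this to $a_i = n^{-2/3} v_k(\mathscr{C}_i)$ and $b_i = n^{-2/3}|\mathscr{C}_i|$ gives the pointwise bound
\begin{equation*}
\mathrm{ord}(\mathbf{V}_k^n)_j \;\leq\; n^{-2/3}|\mathscr{C}_{\sss (j)}| \qquad \text{for every } j \geq 1.
\end{equation*}

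I would then recall that tightness of a sequence in $\ell^2_{\shortarrow}$ is equivalent to (a) tightness of the $\ell^2$-norm, and (b) uniform smallness of the truncation tails, i.e.\ $\lim_{N\to\infty}\limsup_{n\to\infty}\PR\big(\sum_{j>N} x_j^2 > \varepsilon\big) = 0$ for every $\varepsilon > 0$. Both conditions hold for $(n^{-2/3}|\mathscr{C}_{\sss(j)}|)_{j \geq 1}$ because this sequence converges in law in $\ell^2_{\shortarrow}$ by Theorem~\ref{thm_main}, and convergence in distribution on a Polish space implies tightness. Summing the coordinatewise domination yields
\begin{equation*}
\|\mathrm{ord}(\mathbf{V}_k^n)\|_2^2 \leq n^{-4/3}\sum_{j\geq 1}|\mathscr{C}_{\sss(j)}|^2 \quad \text{and} \quad \sum_{j>N}\mathrm{ord}(\mathbf{V}_k^n)_j^2 \leq n^{-4/3}\sum_{j>N}|\mathscr{C}_{\sss(j)}|^2,
\end{equation*}
which transfers (a) and (b) from the component-size vector to $\mathrm{ord}(\mathbf{V}_k^n)$ and hence gives the claimed tightness.

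The argument is essentially a one-liner; there is no real obstacle. The only conceptual point worth highlighting is the monotonicity-under-rearrangement step, which is what allows the trivial inequality $v_k(\mathscr{C}) \leq |\mathscr{C}|$ to survive the reordering by $v_k$-values. Notably, this proof does not use the sharper estimate \eqref{eqn_vertices_of_degree_k-ord}; it needs only the brute bound $v_k \leq |\mathscr{C}|$, and it works uniformly in $k$.
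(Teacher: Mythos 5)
Your argument is correct and follows essentially the same route as the paper's own proof: both reduce the claim to the trivial componentwise bound $v_k(\mathscr{C})\leq|\mathscr{C}|$ together with the $\ell^2_{\shortarrow}$-tightness of the ordered component sizes furnished by Theorem~\ref{thm_main}. Where you improve on the paper's terse statement is in making explicit the step that actually does the work, namely that coordinatewise domination is preserved under decreasing rearrangement, via the min-max identity $\mathrm{ord}(\boldsymbol{a})_j=\inf_{|I|=j-1}\sup_{i\notin I}a_i$; the paper glosses over this by writing the bound directly in ordered notation. You also correctly observe that the finer estimate \eqref{eqn_vertices_of_degree_k-ord}, which the paper cites alongside Theorem~\ref{thm_main}, is not actually needed for tightness alone — the brute bound $v_k\leq|\mathscr{C}|$ suffices and has the additional virtue of being uniform in $k$. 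Your spelling-out of the tightness criterion for $\ell^2_{\shortarrow}$ (norm tightness plus uniformly small tails) and its transfer under domination is standard and sound.
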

\begin{proof}
 Note that for any $j\geq 1$, $v_k(\mathscr{C}_{\sss(j)})\leq |\mathscr{C}_{\sss(j)}|$ uniformly over $k$. The proof now follows from \eqref{eqn_vertices_of_degree_k-ord} and $\ell^2_{\shortarrow}$ tightness of the component sizes given in Theorem~\ref{thm_main}.
\end{proof}

\begin{remark}\normalfont Define $\mathbf{V}^n:=(n^{-2/3}v_k(\mathscr{C}_j))_{k,j\geq 1}$. Then $\{\mathrm{ord}(\mathbf{V}^n)\}_{n\geq 1}$ is also tight in $\ell^2_{\shortarrow}$.
\end{remark}

\section{Critical Percolation}
\label{sec_percolation}

\subsection{Percolation on Configuration Model} \label{percolation_defn}
Let $p = p_n \in (0,1)$ be the percolation parameter. Recall the notation $\mathrm{CM}_{n}(\boldsymbol{d},p)$ for the random graph obtained after deleting edges of $\mathrm{CM}_{n}(\boldsymbol{d})$ independently with probability $1-p$. Suppose, $\boldsymbol{d}'$ is the random degree sequence obtained after percolation. 
Fountoulakis~\cite{F07} showed that, given $\boldsymbol{d}'$, the law of $\mathrm{CM}_{n}(\boldsymbol{d},p)$  is same as the law of $\mathrm{CM}_{n}(\boldsymbol{d}')$.  We will use the following construction of $\mathrm{CM}_{n}(\boldsymbol{d},p)$ due to Janson~\cite{J09}:
\begin{algo} \label{algo:3}
\normalfont \begin{itemize}
 \item[(S1)] For each half-edge $e$, let $v_e$ be the vertex to which $e$ is attached. With probability $1-\sqrt{p}$, one detaches $e$ from $v_e$ and associates $e$ to a new vertex $v'$. Color the new vertex $red$. This is done independently for every existing half-edge. Let $n_+$ be the number of red vertices created and $\tilde{n}=n+n_+$.  Suppose, $\Mtilde{\boldsymbol{d}} = ( \tilde{d}_i )_{i \in [\tilde{n}]}$ is the new degree sequence obtained by the above procedure, i.e. $\tilde{d}_i \sim \text{Bin} (d_i, \sqrt{p})$ for $i \in [n]$ and $\tilde{d}_i=1$ for $i \in [\tilde{n}] \setminus [n]$.
 \item[(S2)] Construct $\mathrm{CM}_{\tilde{n}}(\Mtilde{\boldsymbol{d}})$, independently of (S1).
 \item[(S3)] Delete all the red vertices.
 \end{itemize}
 \end{algo}
\begin{remark}\label{rem:alt-s3}
 \normalfont
 It was argued in \cite{J09} that the obtained multigraph also has the same distribution as $\mathrm{CM}_{n}(\boldsymbol{d},p)$ if we replace (S3) by
 \begin{itemize}
 \item[(S3$'$)] Instead of deleting red vertices, choose any $n_+$ degree one vertices uniformly at random, independently of (S1) and (S2), and delete them.
 \end{itemize}
\end{remark}
\begin{remark}\normalfont The construction of $\mathrm{CM}_{\tilde{n}}(\Mtilde{\boldsymbol{d}})$ in Algorithm~\ref{algo:3} consists of two stages of randomization, the first one is described by (S1), and the second one by (S2). We will consider the following probability space to describe the randomization arising from Algorithm~\ref{algo:3}~(S1): Suppose we have a sequence of degree sequences $(\boldsymbol{d})_{n\geq 1}$. Let  $\mathbbm{P}_p^n$ denote the  probability measure induced on $\mathbb{N}^{\infty}$ by Algorithm~\ref{algo:3}~(S1). Denote the product measure of $(\PR_p^n)_{n\geq 1}$ by $\mathbbm{P}_p$. Thus (S1) is performed independently on $\boldsymbol{d}=\boldsymbol{d}(n)$ as $n$ varies.  All the almost sure statements in this section will be with  respect to the probability measure $\mathbbm{P}_p$. 
\end{remark}
\begin{remark} \label{remark-perc} \normalfont
The idea of the proof of Theorem~\ref{thm_percolation} is as follows.  We show that $\Mtilde{\boldsymbol{d}}$, under Assumption~\ref{assumption2}, satisfies Assumption~\ref{assumption1} $\mathbbm{P}_p$ almost surely and then estimate the number of vertices to be deleted from each component using Lemma~\ref{lem:sec:vertex}. 
Since deleting a degree one vertex does not break up any component, we can just subtract this from the component sizes of $\mathrm{CM}_{\tilde{n}}(\Mtilde{\boldsymbol{d}})$ to get the component sizes of $\mathrm{CM}_{n}(\boldsymbol{d},p_n(\lambda))$. 
Since the degree one vertices do not get involved in surplus edges, deleting degree one vertices does not change the number of surplus edges.
\end{remark}
\subsection{Proof of Theorem~\ref{thm_percolation}}
We now consider the critical window corresponding to percolation. The goal is to prove Theorem~\ref{thm_percolation}. Let $n_j$ and $\tilde{n}_j$ be the number of vertices of degree $j$ before and after performing Algorithm~\ref{algo:3}~(S1) respectively. Further let 
\begin{equation}\tilde{\nu}_n = \frac{\sum_{i \in [\tilde{n}]}\tilde{d}_{i}\big( \tilde{d}_i-1 \big)}{\sum_{i \in [\tilde{n}]}\tilde{d}_{i}}.
\end{equation} 
For convenience we write $r_j=\mathbbm{P}(D=j)$. Denote by $\tilde{n}_{jl}$, the number of vertices that had degree $l$ before and have degree $j$ after performing Algorithm~\ref{algo:3}~(S1). Therefore, $\tilde{n}_{jl} \sim \text{Bin}\big( n_l,b_{lj}(\sqrt{p_n}) \big) $, where $b_{lj}(\sqrt{p_n})= \binom{l}{j} (\sqrt{p_n})^j (1-\sqrt{p_n})^{l-j}$. Using the strong law of large numbers for triangular arrays, note that $\mathbbm{P}_p$ almost surely,
$\tilde{n}_{jl} = n_l b_{lj}(\sqrt{p_n}) + o(n_l) = nr_l b_{lj}(\sqrt{p_n}) + o(n_l).$  Now, $\sum_{l\geq 1}|n_l/n-r_l|\to 0$ and therefore, for all $j \geq 2$,  $\mathbbm{P}_p$ almost surely
 \begin{equation}\label{estimate:n-j-tilde}
  \frac{\tilde{n}_j}{n} = \frac{\sum_{l=j}^{\infty}\tilde{n}_{jl}}{n} = \sum_{l=j}^{\infty}r_lb_{lj}(\sqrt{p}_n)+ o(1).
 \end{equation}
 Also, $n_+ = \sum_{i \in [n]}\big( d_i - \tilde{d}_i \big) \sim \text{Bin}(\ell_n, 1-\sqrt{p_n})$. Therefore, using the similar arguments as  \eqref{estimate:n-j-tilde} again, $\mathbbm{P}_p$ almost surely,
 \begin{equation} \label{estimate_n+}
 \begin{split}
  \frac{n_+}{n} & = \mathbbm{E}(D) \big( 1- \sqrt{p_n} \big) + o(1),  			
  \end{split}
 \end{equation}
\begin{equation} \label{estimate_n_1_tilde}
\frac{\tilde{n}_{1}}{n} =\frac{\sum_{l=1}^{\infty}\tilde{n}_{1l}+n_+}{n}= \frac{\sum_{l=1}^{\infty}\tilde{n}_{1l}}{n}+\mathbbm{E}(D) \big( 1- \sqrt{p_n} \big) + o(1),
\end{equation}
and
 \begin{equation}\label{limit-n-tilde}
  \frac{\tilde{n}}{n} =1+\frac{n_+}{n}=1+\mathbbm{E}(D) \big( 1- \sqrt{p_n} \big) + o(1).
 \end{equation}
Denote $\tilde{r}_l = \mathbbm{P}( \tilde{D}=l )= \lim_{n \to \infty} \tilde{n}_l/\tilde{n}$. 
Let $\tilde{D}_n$ denote the degree of a uniformly chosen vertex from $[\tilde{n}]$, independently of the graph $\mathrm{CM}_{\tilde{n}}(\Mtilde{\boldsymbol{d}})$.
Thus, \eqref{estimate:n-j-tilde} and \eqref{limit-n-tilde} imply that $\tilde{D}_n\xrightarrow{\sss\mathcal{L}} \tilde{D}$. The following lemma verifies the rest of the conditions for $\Mtilde{\boldsymbol{d}}$ in Assumption~\ref{assumption1}:
 \begin{lemma} \label{lem_percolation_condition} The statements below are true  $\mathbbm{P}_p$ almost surely{\rm :}
  \begin{enumerate}
   \item[{\rm(1)}] Under Assumption~\ref{assumption2}~\ref{assumption2-1} and for $r= 1,2,3$,
    \begin{equation}
     \frac{1}{\tilde{n}}\sum_{i \in [n]} \tilde{d}_i^r =  \frac{1}{\tilde{n}}\sum_{j \in [n]} j^r \tilde{n}_j \xrightarrow{n \to \infty}
     \mathbbm{E} [ \tilde{D}^r ].
    \end{equation}
   \item[{\rm(2)}]  Under Assumption~\ref{assumption2},
    \begin{equation}
      \tilde{\nu}_n = 1+\lambda n^{-1/3}+o(n^{-1/3}).
    \end{equation}
\end{enumerate}
  \end{lemma}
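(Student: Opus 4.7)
The plan is to combine exact computations of $\mathbb{P}_p$-expectations via factorial moments of the binomial with McDiarmid's bounded-differences inequality applied to the independent Bernoulli variables that govern each half-edge. Write $\tilde{d}_i = \sum_{j=1}^{d_i} B_{ij}^{(n)}$ with $B_{ij}^{(n)} \sim \mathrm{Bernoulli}(\sqrt{p_n})$, independent across all $(i,j)$ and across $n$ by the product construction of $\mathbb{P}_p$. Every sum of interest is then a function of these independent Bernoullis, so exponential concentration followed by Borel--Cantelli upgrades expectation-level convergence to $\mathbb{P}_p$-a.s.\ convergence.

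For part (1), the first step is to compute $\mathbb{E}_p[\tilde{d}_i^r]$ by expanding ordinary moments into factorial moments and using $\mathbb{E}_p[\tilde{d}_i(\tilde{d}_i-1)\cdots(\tilde{d}_i-k+1)] = d_i(d_i-1)\cdots(d_i-k+1)\, p_n^{k/2}$. Summing over $i \in [n]$ and invoking Assumption~\ref{assumption2}~\ref{assumption2-1} gives convergence of $n^{-1}\mathbb{E}_p[\sum_{i\in[n]}\tilde{d}_i^r]$ to an explicit polynomial in $\sqrt{p}$ and the factorial moments of $D$. The red-vertex contribution (each red vertex contributes $1^r = 1$) is controlled by the SLLN for the i.i.d.\ Bernoullis underlying $n_+ = \sum_{i,j}(1 - B_{ij}^{(n)})$, yielding $n_+/n \to \mu(1-\sqrt{p})$ a.s.\ as in \eqref{estimate_n+}; combined with $\tilde{n}/n \to \zeta$ from \eqref{limit-n-tilde}, matching coefficients identifies the limit as $\mathbb{E}[\tilde{D}^r]$. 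The $\mathbb{P}_p$-a.s.\ fluctuation bound comes from McDiarmid: flipping a single $B_{ij}^{(n)}$ shifts $\tilde{d}_i^r$ by at most $r(d_i+1)^{r-1}$, so the bounded-differences constants satisfy $\sum_{i,j} c_{ij}^2 \leq r^2 \sum_i d_i(d_i+1)^{2(r-1)} = O\bigl(\sum_i d_i^{2r-1}\bigr)$. This is $O(n)$ for $r = 1,2$, and $o(n^{5/3})$ for $r = 3$ because $\sum_i d_i^5 \leq d_{\max}^2 \sum_i d_i^3$ with $d_{\max} = o(n^{1/3})$ (Lemma~\ref{lem_d_max}). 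In every case McDiarmid returns $\mathbb{P}_p^n\bigl(|f_n - \mathbb{E}_p f_n| > \varepsilon n\bigr) \leq 2\exp(-c\varepsilon^2 n^{1/3})$, which is summable in $n$; Borel--Cantelli combined with the cross-$n$ independence of $\{\mathbb{P}_p^n\}$ concludes.

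For part (2), the decisive structural observation is that Algorithm~\ref{algo:3}~(S1) preserves total degree: each half-edge is either retained at its original vertex or reassigned to a new red vertex of degree $1$, so $\sum_{i\in[\tilde{n}]}\tilde{d}_i = \ell_n$ is deterministic. Since red vertices contribute zero to $\tilde{d}_i(\tilde{d}_i-1)$, one has $\tilde{\nu}_n = \ell_n^{-1}\sum_{i\in[n]}\tilde{d}_i(\tilde{d}_i-1)$, and the factorial moment $\mathbb{E}_p[\tilde{d}_i(\tilde{d}_i-1)] = p_n d_i(d_i-1)$ gives $\mathbb{E}_p[\tilde{\nu}_n] = p_n \nu_n = 1 + \lambda n^{-1/3}$ \emph{exactly}, by Assumption~\ref{assumption2}~\ref{assumption2-2}. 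For the fluctuation, flipping $B_{ij}^{(n)}$ changes $\tilde{d}_i(\tilde{d}_i-1)$ by at most $2 d_i$, so $\sum_{i,j} c_{ij}^2 \leq 4\sum_i d_i^3 = O(n)$, and McDiarmid with deviation $\varepsilon n^{2/3}$ returns $2\exp(-c\varepsilon^2 n^{1/3})$; summability and Borel--Cantelli deliver $\tilde{\nu}_n - \mathbb{E}_p[\tilde{\nu}_n] = o(n^{-1/3})$ $\mathbb{P}_p$-a.s., completing part (2).

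The only mildly delicate step is the $r = 3$ McDiarmid bound in part (1): controlling $\sum_i d_i^5$ at the rate needed for summability demands $d_{\max} = o(n^{1/3})$, itself a consequence of Assumption~\ref{assumption2}~\ref{assumption2-1} via Lemma~\ref{lem_d_max}. Everything else reduces to routine factorial-moment bookkeeping and bounded-differences concentration.
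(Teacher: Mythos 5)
Your proposal is correct and follows essentially the same route as the paper's own proof: both apply McDiarmid's bounded-differences inequality (the paper cites \cite[Corollary 2.27]{JLR00}) to the independent half-edge indicators, combine it with Borel--Cantelli and $d_{\max}=o(n^{1/3})$ to get $\mathbb{P}_p$-a.s.\ concentration, note that $\sum_{i\in[\tilde{n}]}\tilde{d}_i=\ell_n$ is deterministic, and use the factorial moment $\mathbbm{E}_p[\tilde{d}_i(\tilde{d}_i-1)]=p_nd_i(d_i-1)$ together with $p_n\nu_n=1+\lambda n^{-1/3}$ for part (2). The only cosmetic difference is that you concentrate the ordinary moments $\sum_i\tilde{d}_i^r$ directly, whereas the paper concentrates the factorial moments $\sum_i\tilde{d}_i(\tilde{d}_i-1)$ and $\sum_i\tilde{d}_i(\tilde{d}_i-1)(\tilde{d}_i-2)$; these are linearly related, and the resulting bounded-differences constants (and hence the $o(n^{5/3})$ bound on $\sum_ic_i^2$ for the cubic case) are the same. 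One minor imprecision: you state a uniform tail bound $2\exp(-c\varepsilon^2n^{1/3})$ for all $r$, but for $r=1,2$ one has $\sum_ic_i^2=O(n)$ and the true decay is exponential in $n$, while for $r=3$ the exponent is $\omega(n^{1/3})$ rather than $\Theta(n^{1/3})$ since $\sum_id_i^5=o(n^{5/3})$; in all cases the tail is summable so the conclusion is unaffected.
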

\begin{proof}
We will make use of \cite[Corollary 2.27]{JLR00}. Suppose $Z_1$, $Z_2$, ..., $Z_N$ are independent random variables with $Z_i$ taking values in $\Lambda_i$ and $f:\prod_{i=1}^N \Lambda_i \to \mathbbm{R}$ satisfies the following:
 If two vectors $z,z' \in \prod_{i=1}^N \Lambda_i$ differ only in the $i^{th}$ coordinate, then $| f(z)- f(z') | \leq c_i$ for some constant $c_i$.
Then, for any $t>0$, the random variable $X= f(Z_1, Z_2, \dots , Z_N)$ satisfies
\begin{equation} \label{janson_lemma_bound}
 \mathbbm{P} \Big( \big| X- \mathbbm{E}[X] \big| > t \Big) \leq 2 \exp \Big( -\frac{t^2}{2\sum_{i =1}^{N} c_i^2} \Big).
\end{equation}
 Now let $I_{ij}$ denote the indicator of the $j^{th}$ half-edge corresponding to vertex $i$ to be kept after Algorithm~\ref{algo:3}~(S1). Then $I_{ij} \sim \text{Ber} (\sqrt{p_n})$ independently for $j \in [d_i]$, $i \in [n]$. Let
 \begin{equation}
 \mathbf{I}:= (I_{ij})_{j \in [d_i], i \in [n]} \ \text{ and }\  f_1(\mathbf{I}):=\sum_{i\in [n]} \tilde{d_i}(\tilde{d}_i-1).
 \end{equation}Note that $f_1(\mathbf{I})=\sum_{i\in [\tilde{n}]}\tilde{d}_i(\tilde{d}_i-1)$ since the degree one vertices do not contribute to the sum. One can check that, by changing the status of one half-edge corresponding to vertex $k$, we can change $f_1(\cdot)$ by at most $2(d_{k}+1)$. Therefore, \eqref{janson_lemma_bound} yields
 \begin{equation}\mathbbm{P}_p \Big( \Big|\sum_{i\in [n]} \tilde{d_i}(\tilde{d}_i-1)- p_n \sum_{i\in [n]} d_i(d_i-1) \Big| >t \Big) \leq 2 \exp \bigg( -\frac{t^2}{8\sum_{i \in [n]} d_i (d_{i}+1)^2}\bigg).
 \end{equation}
 By setting $t= n^{1/2+ \varepsilon}$ for some suitably small $\varepsilon >0$, using the finite third moment conditions and the Borel-Cantelli lemma we conclude that  $\mathbbm{P}_p$ almost surely,
 \begin{equation}\sum_{i\in [n]} \tilde{d_i}(\tilde{d}_i-1)= p_n \sum_{i\in [n]} d_i(d_i-1) +O(n^{1/2+\varepsilon}), 
 \end{equation}and in particular,
  \begin{equation}\label{estimate:nu-n-num}\sum_{i\in [\tilde{n}]} \tilde{d_i}(\tilde{d}_i-1)=\sum_{i\in [n]} \tilde{d_i}(\tilde{d}_i-1)= p_n \sum_{i\in [n]} d_i(d_i-1) +o(n^{2/3}).
 \end{equation}
 Similarly, take $f_2(\mathbf{I})=\sum_{i\in [n]}\tilde{d}_i(\tilde{d}_i-1)(\tilde{d}_i-2)$ and note that changing the status of one bond changes $f_2(\cdot)$ by at most $[2(d_k+1)]^2$. Thus, \eqref{janson_lemma_bound} gives
 \begin{equation}
 \begin{split}  &\mathbbm{P}_p \Big( \Big| f_2(\mathbf{I})- p_n^{3/2} \sum_{i\in [n]} d_i(d_i-1)(d_i-2) \Big| >t \Big) \\
 &\hspace{2cm}\leq 2 \exp \bigg( -\frac{t^2}{32\sum_{i \in [n]} d_i (d_{i}+1)^4} \bigg)\\
 &\hspace{2cm}  \leq \exp \bigg( -\frac{t^2}{32d_{\max}(d_{\max}+1)\sum_{i \in [n]}  (d_{i}+1)^3} \bigg),
  \end{split}
 \end{equation}which implies that, $\mathbbm{P}_p$ almost surely,
 \begin{equation}\label{estimate-third-mom-perc}
  \sum_{i\in [\tilde{n}]}\tilde{d}_i(\tilde{d}_i-1)(\tilde{d}_i-2)=\sum_{i\in [n]}\tilde{d}_i(\tilde{d}_i-1)(\tilde{d}_i-2)=p_n^{3/2}\sum_{i\in [n]} d_i(d_i-1)(d_i-2)+o(n),
 \end{equation}since $d_{\max}^2\sum_{i\in [n]}(d_i+1)^3=o(n^{5/3})$.
Now, to prove Lemma~\ref{lem_percolation_condition}~{\rm (1)}, note that the case $r=1$ follows by simply observing that $\sum_{i\in \tilde{n}}\tilde{d}_i=\sum_{i\in [n]}d_i$. The cases $r=2,3$ follow from \eqref{estimate:nu-n-num} and \eqref{estimate-third-mom-perc}.
Finally, to see Lemma~\ref{lem_percolation_condition}~{\rm (2)}, note that
\begin{equation}
\begin{split}
  \tilde{\nu}_n  & = \frac{\sum_{i \in [\tilde{n}]}\tilde{d}_i(\tilde{d}_i-1)}{\sum_{i \in [\tilde{n}]}\tilde{d}_i}
 								  =\frac{p_n \sum_{i \in [n]} d_i \big( d_i -1 \big) + o \big( n^{2/3} \big)}{\sum_{i \in [n]}d_i} \\
 								 &= \frac{p_n \sum_{i \in [n]} d_i(d_i-1)}{\sum_{i \in [n]} d_i}+o(n^{-1/3})= 1+\frac{\lambda}{n^{1/3}}+o(n^{-1/3}),
 \end{split}
\end{equation} by \eqref{estimate:nu-n-num} and this completes the proof of Lemma~\ref{lem_percolation_condition}.
\end{proof}
We will denote by $\tilde{\mathscr{C}}_{\sss (j)}$, the $j^{th}$ largest component of $\mathrm{CM}_{\tilde{n}}(\Mtilde{\boldsymbol{d}})$. 
To conclude Theorem~\ref{thm_percolation} we also need to estimate the number of deleted vertices from each component. Recall from Remark~\ref{rem:alt-s3} that $\mathrm{CM}_n(\boldsymbol{d},p_n(\lambda))$ can be obtained from $\mathrm{CM}_{\tilde{n}}(\Mtilde{\boldsymbol{d}})$ by deleting relevant number of degree one vertices \emph{uniformly} at random. Let $v^d_1(\tilde{\mathscr{C}}_{\sss (j)})$ be the number of degree one vertices of $\tilde{\mathscr{C}}_{\sss (j)}$ that are deleted while creating $\mathrm{CM}_{n}(\boldsymbol{d},p_{n}(\lambda))$ from  $\mathrm{CM}_{\tilde{n}}(\Mtilde{\boldsymbol{d}})$. Since the vertices are to be chosen uniformly from all degree one vertices, the number of vertices to be deleted from $\tilde{\mathscr{C}}_{\sss (j)}$ is asymptotically the total number of degree one vertices in $\tilde{\mathscr{C}}_{\sss (j)}$ times the proportion of degree one  vertices to be deleted. Therefore,
\begin{equation} \label{degree_one_vertices}
\begin{split}
 v^d_1(\tilde{\mathscr{C}}_{\sss (j)}) &= \frac{n_+}{\tilde{n}_1}v_1(\tilde{\mathscr{C}}_{\sss (j)})+ o_{\sss\mathbbm{P}}(n^{2/3})= \frac{n_+}{\tilde{n}_1} \frac{\tilde{n}_1}{\sum_{k=0}^{\infty} k\tilde{n}_k} \big| \tilde{\mathscr{C}}_{\sss (j)}\big| + o_{\sss\mathbbm{P}}(n^{2/3})\\
 & = \frac{n_+}{\ell_n}\big| \tilde{\mathscr{C}}_{\sss (j)} \big|+ o_{\sss\mathbbm{P}}(n^{2/3})= \frac{\mathbbm{E}[D]\big(1-\sqrt{p}_n\big)}{\mathbbm{E}[D]}\big| \tilde{\mathscr{C}}_{\sss (j)} \big|+ o_{\sss\mathbbm{P}}(n^{2/3})\\
 & = \big(1-\sqrt{p}_n\big) \big| \tilde{\mathscr{C}}_{\sss (j)}\big| + o_{\sss\mathbbm{P}}(n^{2/3}),
 \end{split}
\end{equation} where the third equality follows from \eqref{eqn_vertices_of_degree_k-ord}. 
The proof of Theorem \ref{thm_percolation} is now complete by using the $\ell^{2}_{\shortarrow}$ convergence in Lemma~\ref{vertices_of_degree_k_ord}, \eqref{degree_one_vertices} and Remark~\ref{remark-perc}.

\section{Joint convergence at multiple locations in the critical window} \label{sec_multidimensional}
We will prove Theorem~\ref{thm_multiple_convergence} in this section. 
In Section~\ref{sec:perc-alt-cons}, we give a construction of the joint distribution of the percolated graphs for different percolation parameters that are coupled in a way described in Theorem~\ref{thm_multiple_convergence}.
In Section~\ref{sec:dynamic-construction}, we compare the process of percolated graphs with a different graph process that turns out to be easier to work with.
As discussed in Remark~\ref{rem:mult-coal-heuristics}, let the mass of a component be the number of open half-edges (re-scaled by $n^{2/3}$). 
The alternatively constructed graph process can be modified in such a way that the vector of masses evolves according to an \emph{exact} multiplicative coalescent as discussed in Section~\ref{sec:modified-C1}. 
Thus the joint convergence result at multiple locations of the scaling window can be deduced for the modified process using the Feller property of the multiplicative coalescent.
Further, the modified process remains \emph{close} to the dynamic construction. 
In Section~\ref{sec:open-he}, the vector of masses are shown to be asymptotically proportional to the component sizes and we combine all the above observations in Section~\ref{sec-mul-conv-thm-proof} to complete the proof of Theorem~\ref{thm_multiple_convergence}.
\subsection{Construction of the percolated graph process} 
\label{sec:perc-alt-cons}
We start by explaining a way to construct the graph process $(\mathrm{CM}_n(\boldsymbol{d},p_n(\lambda)))_{\lambda\in [\lambda_\star,\lambda^\star]}$, for any $-\infty<\lambda_\star<\lambda^\star<\infty$. 
Fix any $p_1<p_2<\dots<p_m$ and consider $(\mathrm{CM}_n(\bld{d},p_i))_{i\in [m]}$. 
Recall that each edge $e$ of $\CM$ has an independent uniform $[0,1]$ random variable $U_e$ associated to it and  $\mathrm{CM}_n(\bld{d},p_i)$ is obtained from $\CM$
 by keeping only those edges $e$ with $U_e\leq p_i$. This couples the graphs $(\mathrm{CM}_n(\bld{d},p_i))_{i\in [m]}$. 
 Moreover, under this coupling, $\mathrm{CM}_n(\bld{d},p_i)$ is distributed as the graph obtained from edge percolation on $\mathrm{CM}_n(\bld{d},p_{i+1})$ with probability $p_i/p_{i+1}$ for all $i<m$. 
 The following two lemmas are modifications of \cite[Lemmas~3.1,~3.2]{F07} that lead to the construction Algorithm~\ref{algo:cons-perc} below.
  For a graph $G$, let $\rE(G)$ denote the set of edges of $G$. For a sub-graph $G$ of $\CM$, let $\mathcal{H}(G)$ denote the set of half-edges that are part of some edge in $G$ and $\mathcal{H} = \mathcal{H}(\CM)$.
\begin{lemma}\label{lem:perc-cons-1} For $k_1\leq \dots\leq k_m$, conditionally on $\{|\rE(\mathrm{CM}_n(\bld{d},p_i))| = k_i:i\leq m\}$, the half-edges in $\mathrm{CM}_n(\bld{d},p_i)$ can be generated sequentially as follows: Let $k_0 = 0$, $\mathcal{H} (\mathrm{CM}_n(\bld{d},p_{0}))= \varnothing$. For each $i\leq m$, declare $\mathcal{H}(\mathrm{CM}_n(\bld{d},p_{i})) = \mathcal{H}(\mathrm{CM}_n(\bld{d},p_{i-1}))\cup \mathcal{H}_i$, where $\mathcal{H}_i$ is uniformly chosen among all the subsets of size $2k_i-2k_{i-1}$ of $\mathcal{H}\setminus \cup_{j<i}\mathcal{H}_i$.
\end{lemma}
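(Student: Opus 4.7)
The plan is to compute, for arbitrary nested sets $\varnothing = S_0 \subset S_1 \subset \cdots \subset S_m$ of half-edges with $|S_i| = 2k_i$, the joint probability
\[ \mathbbm{P}\big(\mathcal{H}(\mathrm{CM}_n(\bld{d},p_i)) = S_i,\ i \leq m \,\big|\, |\rE(\mathrm{CM}_n(\bld{d},p_i))| = k_i,\ i \leq m\big), \]
and to show that it depends on the $S_i$'s only through the cardinalities $|S_i|$. Once this is established, the conditional law is uniform over all nested sequences of the prescribed sizes, which is precisely the law produced by the sequential construction in the lemma.

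First I would realize the joint construction of $(\mathrm{CM}_n(\bld{d},p_i))_{i \leq m}$ concretely as follows: sample a uniformly random perfect matching $M$ of the $\ell_n$ half-edges to form $\CM$, then attach i.i.d. Uniform$[0,1]$ marks $U_e$ to each resulting edge $e$, and declare $\mathrm{CM}_n(\bld{d},p_i)$ to consist of the edges with $U_e \leq p_i$. Write $T_i = S_i \setminus S_{i-1}$, so $|T_i| = 2(k_i - k_{i-1})$. For the event $\{\mathcal{H}(\mathrm{CM}_n(\bld{d},p_i)) = S_i:\ i \leq m\}$ to hold, two independent conditions must be met: (a) each $T_i$ is itself a union of matched pairs under $M$, and (b) the $k_i - k_{i-1}$ edges located inside $T_i$ all carry $U$-marks in $(p_{i-1}, p_i]$ for every $i \leq m$ (with $p_0 = 0$), while the remaining $\ell_n/2 - k_m$ edges carry marks exceeding $p_m$. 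Since $M$ is independent of the $U_e$'s, the probability of (b) given that (a) holds depends only on $p_1, \dots, p_m$ and on the numbers $k_i - k_{i-1}$; hence it suffices to show that $\mathbbm{P}(M \text{ pairs within every } T_i)$ is a function of the block sizes $|T_i|$ alone.

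That last probability is a direct matching count: the number of matchings of the $\ell_n$ half-edges that pair each $T_i$ internally and match the remaining $\ell_n - 2k_m$ half-edges arbitrarily equals
\[ \prod_{i=1}^m \frac{(2(k_i - k_{i-1}))!}{2^{k_i - k_{i-1}}(k_i - k_{i-1})!}\cdot \frac{(\ell_n - 2k_m)!}{2^{\ell_n/2 - k_m}(\ell_n/2 - k_m)!}, \]
whereas the total number of perfect matchings on $[\ell_n]$ is $\ell_n!/(2^{\ell_n/2}(\ell_n/2)!)$. Their ratio depends only on $\ell_n$ and the increments $k_i - k_{i-1}$, which is exactly the content of the claim. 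Combining this with the factor from (b) and dividing by the corresponding marginal probability of the conditioning event yields that the conditional joint law of $(\mathcal{H}(\mathrm{CM}_n(\bld{d},p_i)))_{i \leq m}$ is uniform over nested sequences of the prescribed sizes, and this is identical to the law of the sequential uniform growth described in the lemma.

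The main obstacle is largely organizational rather than analytical: one must carefully separate the two sources of randomness (the uniform matching $M$ and the i.i.d. edge marks $U_e$), track the cancellation of all size-independent factors in the conditioning, and invoke the exchangeability of the uniform matching under arbitrary block partitions of the half-edges. No nontrivial estimate is needed; the uniformity is a clean consequence of these symmetries.
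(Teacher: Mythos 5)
Your proof is correct and takes essentially the same route as the paper, with a modest presentational difference worth noting. The paper's argument (stated for $m=2$ for brevity) proceeds by the chain rule: it computes $\mathbbm{P}(\mathcal{E}_1)$ (the matching pairs each block internally) as a ratio of double factorials, then uses the percolation fact that, given the graph and the number of retained edges, the retained set is a uniform subset, to produce the factors $1/\binom{\ell_n/2}{k_2}$ and $1/\binom{k_2}{k_1}$, and finally simplifies the product to $1/\big(\binom{\ell_n}{2k_1}\binom{\ell_n-2k_1}{2k_2-2k_1}\big)$, observing this is independent of $H_1,H_2$. You instead work directly with the joint probability for general $m$: you realize the coupled percolated graphs through a uniform perfect matching $M$ together with i.i.d.\ uniform marks $U_e$, split the target event into the matching condition (a) and the marks condition (b), and verify that both factors (and the normalizing conditional probability) depend only on $\ell_n$ and the increments $k_i-k_{i-1}$, hence the conditional law is uniform over nested sequences, which is exactly the sequential uniform-growth law in the lemma. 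The two proofs rest on the same pair of facts — the matching count and the exchangeability of the percolation marks — so the substance is identical; your version is slightly cleaner in that it dispenses with the explicit chain-rule bookkeeping and handles arbitrary $m$ without appealing to ``similarly for general $m$.''
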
 
\begin{lemma}\label{lem:perc-cons-2} Let $d_k(i,i+1) $ be the number of half-edges attached to vertex $k$ in the graph $\mathrm{CM}_n(\bld{d},p_{i+1})$ that are not in $\mathrm{CM}_n(\bld{d},p_i)$. For any $i\geq 1$, conditionally on the event $\{\bld{d}(j,j+1) = \bld{d}_0(j,j+1):j\leq m\}$ and $\mathcal{H}(\mathrm{CM}_n(\bld{d},p_{i-1}))$, the perfect matching of $\mathcal{H}(\mathrm{CM}_n(\bld{d},p_{i}))\setminus \mathcal{H}(\mathrm{CM}_n(\bld{d},p_{i-1}))$ constituting the edges $\rE(\mathrm{CM}_n(\bld{d},p_{i})\setminus\mathrm{CM}_n(\bld{d},p_{i-1}))$ is a uniform perfect matching, where we have assumed that $p_0 = 0$.
\end{lemma}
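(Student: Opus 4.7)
The plan is to use the standard representation of $\CM$ as a uniform random perfect matching $M$ on the half-edge set $\mathcal{H}$, together with iid $\mathrm{Unif}[0,1]$ labels $(U_e)_{e\in M}$, so that $\rE(\mathrm{CM}_n(\bld{d},p)) = \{e \in M : U_e \leq p\}$. Defining the level sets $L_j := \{e \in M : p_{j-1} < U_e \leq p_j\}$ (with $p_0 = 0$ and $p_{m+1} = 1$), the edge set of interest is exactly $L_i$, with half-edge set $\mathcal{H}(L_i) = \mathcal{H}(\mathrm{CM}_n(\bld{d},p_i)) \setminus \mathcal{H}(\mathrm{CM}_n(\bld{d},p_{i-1}))$. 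Under this coupling, the lemma's conditioning fixes the vertex profile of each $L_j$ (via $\bld{d}(j,j+1) = \bld{d}_0(j,j+1)$ for all $j \leq m$) and the half-edge set $\mathcal{H}(\mathrm{CM}_n(\bld{d},p_{i-1})) = \bigcup_{j \leq i-1} \mathcal{H}(L_j) =: S$.

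The key tool will be the following elementary exchangeability property of a uniform random perfect matching on $\mathcal{H}$: for any partition $\mathcal{H} = H_1 \sqcup \cdots \sqcup H_r$ into even-sized subsets, conditional on $M$ respecting this partition (i.e., every edge of $M$ lies entirely within some $H_j$), the restrictions $M|_{H_j}$ are mutually independent, with each $M|_{H_j}$ distributed as a uniform perfect matching on $H_j$. This is immediate from the count that the number of matchings of $\mathcal{H}$ respecting the partition factorizes as $\prod_j (|H_j|-1)!!$, so each such matching receives equal mass under the uniform measure on matchings of $\mathcal{H}$.

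I would then further condition on $\mathcal{H}(L_i) = T$ for an arbitrary candidate set $T \subseteq \mathcal{H} \setminus S$ compatible with the prescribed vertex profile $\bld{d}_0(i-1,i)$. This yields the tripartition $\mathcal{H} = S \sqcup T \sqcup C$, with $C := \mathcal{H} \setminus (S \cup T)$, which $M$ necessarily respects (as do the finer sub-level partitions inside $S$ and $C$ imposed by the remaining conditioning). Applying the exchangeability property, $M|_T$ is uniformly distributed over perfect matchings of $T$, independently of $M|_S$ and $M|_C$. Averaging out the random identity of $T$ preserves this uniformity fibre-wise, so the matching on $\mathcal{H}(L_i)$ is a uniform perfect matching, as claimed.

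The main subtlety will be clean bookkeeping of the conditioning: in particular, the conditioning on degree profiles $\bld{d}(j,j+1)$ for $j > i$ constrains the partition of $C$ into sub-level half-edge sets and thus affects the law of $M|_C$, but leaves $M|_T$ untouched once the tripartition has been fixed. Once this is organized, the argument reduces to the combinatorial identity above and requires no further random graph input.
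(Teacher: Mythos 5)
Your argument is correct and takes a genuinely different route from the paper. The paper's proof, following Fountoulakis, is a direct enumeration: it fixes candidate half-edge sets $H_1,H_2$, expresses via double factorials and binomial coefficients the conditional probability of observing a particular pair of perfect matchings together with those sets, sums over all pairs $(H_1,H_2)$ compatible with the prescribed degree profiles, and divides by the probability of the conditioning event obtained from Lemma~\ref{lem:perc-cons-1}; the resulting ratio $1/\bigl((2k_1-1)!!\,(2k_2-2k_1-1)!!\bigr)$ is manifestly independent of the chosen matchings, giving uniformity. You instead exploit the representation of $\CM$ as a uniform perfect matching $M$ on $\mathcal{H}$ decorated with i.i.d.\ uniform edge labels, together with the factorization $(|\mathcal{H}|-1)!!=\prod_j(|H_j|-1)!!$ of matchings respecting a partition; this makes the conditional independence of $M|_T$ from the rest structurally transparent, treats arbitrary $i$ and $m$ in one stroke, and also re-derives Lemma~\ref{lem:perc-cons-1} essentially for free, at the cost of more delicate bookkeeping of what is being conditioned on. The one step to make explicit in a final write-up is that conditioning on the full refined level partition $(\mathcal{H}(L_j))_j$ (rather than merely its degree profiles) changes the law of $M$ only through the event that $M$ respects that partition: the residual label constraints $\{U_e\in(p_{j-1},p_j]\text{ for all }e\in M|_{\mathcal{H}(L_j)}\}$ are, given the partition is respected, independent of the matching structure, since the $U_e$ are i.i.d.\ given $M$ and the constraint has the same probability for every respecting matching. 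Your phrase ``leaves $M|_T$ untouched once the tripartition has been fixed'' relies precisely on this observation and should be justified rather than asserted; with that spelled out, the averaging over $T$ and over the sub-level partitions of $S$ and $C$ is routine, and the proof is complete.
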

\begin{algo}\label{algo:cons-perc} \normalfont Let $(U_i)_{i\geq 1}$ be a finite collection of i.i.d uniform $[0,1]$ random variables. Construct a collection of graphs $(G_n(\lambda))_{\lambda\in\R}$ using the following two steps:
\begin{itemize}
 \item[{\rm(S0)}]  Construct the process $\bld{E}_n = (E_n(\lambda))_{\lambda\in\R}$, where $E_n(\lambda) = \#\{i:U_i\leq p_n(\lambda)\}$.
 \item[{\rm(S1)}] Initially, $G_n(-\infty)$ is a graph only consisting of isolated vertices with no paired half-edges. At each time point $\lambda$ where $E_n(\lambda)$ has a jump, choose two unpaired half-edges uniformly at random and pair them. The graph $G_n(\lambda)$ is obtained by adding this edge to $G_n(\lambda-)$. 
\end{itemize} 
\end{algo} 
Algorithm~\ref{algo:cons-perc}~(S0) can be regarded as the birth of edges and Algorithm~\ref{algo:cons-perc}~(S1) ensures that the edges of the graph $G_n(\lambda)$ are obtained from a uniform perfect matching of the corresponding half-edges. 
Using Lemmas~\ref{lem:perc-cons-1}~and~\ref{lem:perc-cons-2}, the graph processes $(G_n(\lambda))_{\lambda\in\R}$ and $(\mathrm{CM}_n(\bld{d},p_n(\lambda)))_{\lambda\in \R}$ have the same finite-dimensional distributions. Therefore, for each fixed $n$, it follows that $(G_n(\lambda))_{\lambda\in\R}$ and $(\mathrm{CM}_n(\bld{d},p_n(\lambda)))_{\lambda\in \R}$ have the exact same distribution. 
We complete this section by adding proofs of Lemmas~\ref{lem:perc-cons-1},~and~\ref{lem:perc-cons-2} which are in the same spirit as the arguments of \cite[Lemmas~3.1,~3.2]{F07}.
\begin{proof}[Proof of Lemma~\ref{lem:perc-cons-1}] Assume that $k=2$ for the sake of simplicity. 
Observe that the total number of perfect matchings of $2k$ objects is given by $2k!/(k!2^k) = (2k-1)!!$.
Let $H_1$, $H_2$ be two disjoint subsets of $\mathcal{H}$ with $|H_1| = 2k_1$, $|H_2| = 2k_2-2k_1$.
Let $\mathcal{E}_1$ denote the event that a uniform perfect matching of all the half-edges contains also perfect matchings of the half-edges in $H_1$ and $H_2$. Then,
\begin{equation}\label{prob-E1}
\prob{\mathcal{E}_1} = \frac{(2k_1-1)!! (2k_2-2k_1-1)!! (\ell_n-2k_2-1)!!}{(\ell_n-1)!!}.
\end{equation}
Also, for percolation on any (random) graph, conditional on the set of edges of the graph and the fact that $k$ edges have been retained by percolation, the choice of the retained edges is uniformly distributed among all subsets of size $k$ of the set of edges. Let $\mathcal{E}_2$ denote the event that $|\mathcal{H}(\mathrm{CM}_n(\bld{d},p_{1}))| = 2k_1$, and  $|\mathcal{H}(\mathrm{CM}_n(\bld{d},p_{2}))| = 2k_2$. It follows that
\begin{equation}
\prob{\mathcal{H}(\mathrm{CM}_n(\bld{d},p_{2}))=H_1\cup H_2\mid \mathcal{E}_1,\mathcal{E}_2}= \frac{1}{\binom{\ell_n/2}{k_2}},\end{equation}
and
\begin{equation}
\prob{\mathcal{H}(\mathrm{CM}_n(\bld{d},p_{1}))=H_1\mid \mathcal{E}_1,\mathcal{E}_2,\mathcal{H}(\mathrm{CM}_n(\bld{d},p_{2}))=H_1\cup H_2} = \frac{1}{\binom{k_2}{k_1}}.
\end{equation}
 Thus, conditional on the event $\mathcal{E}_2$, the probability that $\mathcal{H}(\mathrm{CM}_n(\bld{d},p_{1})) = H_1$ and $\mathcal{H}(\mathrm{CM}_n(\bld{d},p_{2}))\setminus \mathcal{H}(\mathrm{CM}_n(\bld{d},p_{1}))=H_2$
 % conditional on the fact that $|\mathcal{H}(\mathrm{CM}_n(\bld{d},p_{1}))| = 2k_1$, and  $|\mathcal{H}(\mathrm{CM}_n(\bld{d},p_{2}))| = 2k_2$ 
 is given by 
 \begin{equation}\label{perfect-matching-1}
 \begin{split}
 \frac{(2k_1-1)!! (2k_2-2k_1-1)!! (\ell_n-2k_2-1)!!}{(\ell_n-1)!!} \frac{1}{\binom{\ell_n/2}{k_2}\binom{k_2}{k_1}} = \frac{1}{\binom{\ell_n}{2k_1}\binom{\ell_n-2k_1}{2k_2-2k_1}},
 \end{split}
 \end{equation}which does not depend on $H_1$ or $H_2$, and the proof follows.
\end{proof}
\begin{proof}[Proof of Lemma~\ref{lem:perc-cons-2}]
 Fix two disjoint subsets $H_1$, $H_2$ of $\mathcal{H}$ such that $|H_1| = 2k_1$, $|H_2| = 2k_2-2k_1$.
 As in the proof of Lemma~\ref{lem:perc-cons-1}, let $\mathcal{E}_2$ denote the event that $|\mathcal{H}(\mathrm{CM}_n(\bld{d},p_{1}))| = 2k_1$, and  $|\mathcal{H}(\mathrm{CM}_n(\bld{d},p_{2}))| = 2k_2$.
An identical argument as the proof of \eqref{perfect-matching-1} now gives, conditionally on $\mathcal{E}_2$, the probability that $\mathcal{H}(\mathrm{CM}_n(\bld{d},p_1)) = H_1$, $\mathcal{H}(\mathrm{CM}_n(\bld{d},p_2))\setminus \mathcal{H}(\mathrm{CM}_n(\bld{d},p_1)) = H_2$, and given perfect matchings on $\mathcal{H}(\mathrm{CM}_n(\bld{d},p_{1}))$, $\mathcal{H}(\mathrm{CM}_n(\bld{d},p_2))\setminus \mathcal{H}(\mathrm{CM}_n(\bld{d},p_1))$ have been observed, is given by 
\begin{equation}\label{perfect-matching-2}
\begin{split}
\frac{1}{\binom{\ell_n/2}{k_2}\binom{k_2}{k_1}}\frac{(\ell_n-2k_2-1)!!}{(\ell_n-1)!!}.
\end{split}
\end{equation} 
Let $\rD(H)$ denote the degree sequence induced by the set of half-edges $H$, and  $S$ denote the collection of \emph{disjoint} pairs $(H_1,H_2)$ such that $|H_1| = 2k_1$, $|H_2| = 2k_2-2k_1$, $\rD(H_1) = \bld{d}_0(0,1)$, and $\rD(H_2) = \bld{d}_0(1,2)$.
Then, conditionally on $\mathcal{E}_2$, the probability that $\bld{d}(0,1) = \bld{d}_0(0,1)$,  $\bld{d}(1,2) = \bld{d}_0(1,2)$, and given particular perfect matchings have been observed on $\mathcal{H}(\mathrm{CM}_n(\bld{d},p_1))$ and $\mathcal{H}(\mathrm{CM}_n(\bld{d},p_2))\setminus \mathcal{H}(\mathrm{CM}_n(\bld{d},p_1))$, is  
\begin{equation}\label{perfect-matching-3}
 \sum_{(H_1,H_2)\in S} \frac{1}{\binom{\ell_n/2}{k_2}\binom{k_2}{k_1}}\frac{(\ell_n-2k_2-1)!!}{(\ell_n-1)!!}  = \frac{|S|}{\binom{\ell_n/2}{k_2}\binom{k_2}{k_1}}\frac{(\ell_n-2k_2-1)!!}{(\ell_n-1)!!}.
\end{equation}
Moreover, by Lemma~\ref{lem:perc-cons-1}, the probability that $\bld{d}(0,1) = \bld{d}_0(0,1)$,  $\bld{d}(1,2) = \bld{d}_0(1,2)$, conditionally on $\mathcal{E}_2$, is given by 
  \begin{equation}\label{perfect-matching-4}
   \frac{|S|}{\binom{\ell_n}{2k_1}\binom{\ell_n-2k_1}{2k_2-2k_1}}. 
  \end{equation}
Now,  \eqref{perfect-matching-3} and \eqref{perfect-matching-4} together yield that the probability that two particular perfect matchings are observed on $\mathcal{H}(\mathrm{CM}_n(\bld{d},p_1))$ and $\mathcal{H}(\mathrm{CM}_n(\bld{d},p_2))\setminus \mathcal{H}(\mathrm{CM}_n(\bld{d},p_1))$, conditional on $\bld{d}(0,1) = \bld{d}_0(0,1)$,  $\bld{d}(1,2) = \bld{d}_0(1,2)$ is given by 
\begin{equation}
\frac{1}{\binom{\ell_n/2}{k_2}\binom{k_2}{k_1}}\frac{(\ell_n-2k_2-1)!!}{(\ell_n-1)!!}\binom{\ell_n}{2k_1}\binom{\ell_n-2k_1}{2k_2-2k_1} = \frac{1}{(2k_1-1)!!(2k_2-2k_1-1)!!},
\end{equation}
 and the proof is complete.
\end{proof}

\subsection{The dynamic construction}\label{sec:dynamic-construction}
Let us now describe a dynamic construction of $\CM$ that turns out to be easier to work with. This dynamic construction was introduced in \cite{BBSX14} to study the metric-space limits of the large components of the percolated configuration model. 
It will be shown that the graphs generated by this dynamic construction at a suitable range of time \emph{approximates} the  process $(\mathrm{CM}_n(\boldsymbol{d},p_n(\lambda)))_{\lambda\in\R}$.  
\begin{algo} \label{algo-dyn-cons} \normalfont At time $t=0$, assume that there are $d_i$ \emph{open} half-edges associated with vertex $i$, for all $i\in [n]$. Associate i.i.d unit rate exponential clocks to each of the open half-edges. Each time an exponential clock rings, the corresponding half-edge selects another open half-edge uniformly at random and gets paired to it. The two paired half-edges are declared to be closed and the associated exponential clocks are removed. The process continues until the open half-edges are exhausted.
\end{algo}
Let $\mathcal{G}_n(t)$ denote the graph generated upto time $t$. Notice that $\mathcal{G}_n(\infty)$ is distributed as $\CM$ since each half-edge chooses to pair with another uniformly chosen open half-edge. 
Denote the total number of open-half-edges remaining at time $t$ while implementing Algorithm~\ref{algo-dyn-cons} by $s_1(t)$. 
The graph process, given by Algorithm~\ref{algo-dyn-cons}, can also be constructed as follows:
\begin{algo}\label{algo:dyn-cons-alt} \normalfont Let $\Xi_n$ be an inhomogeneous Poisson process with rate $s_1(t)$ at time $t$. Let $e_1<e_2<\dots$ be the event times of $\Xi_n$.
\begin{itemize}
\item[{\rm(S1)}] At each event time, choose two unpaired half-edges uniformly at random and pair them. The graph $\mathcal{G}_n(t)$ is obtained by adding this edge to $\mathcal{G}_n(t-)$. 
\end{itemize} 
\end{algo} 
Notice the similarity between Algorithm~\ref{algo:cons-perc}~(S1) and Algorithm~\ref{algo:dyn-cons-alt}~(S1). 
Now, the idea is to compare the number of half-edges that have been paired by Algorithms~\ref{algo:cons-perc}~and~\ref{algo:dyn-cons-alt}.
For that, we need the following lemma that describes the evolution of the count of the total number of open half-edges in Algorithm~\ref{algo:dyn-cons-alt}:
\begin{lemma}[{\cite[Lemma 8.2]{BBSX14}}]\label{lem:total-open-he}  Let $s_1(t)$ denote the total number of open half-edges at time $t$. Suppose that 
%$|\ell_n/n-\mu| = o(n^{-\gamma}),$ for some $1/3<\gamma<1/2$ and  
Assumption \ref{assumption2} holds. Then, for any $T>0$ and some $1/3<\gamma<1/2$, 
\begin{equation}\label{eqn:s-1-he}
 \sup_{t\leq T}\Big|\frac{1}{\ell_n}s_1(t)- \e^{-2t}\Big|= \oP(n^{-\gamma}).
\end{equation}
\end{lemma}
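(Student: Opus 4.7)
The plan is to analyze $s_1(t)$ as a continuous-time pure-death Markov process and extract the asymptotics via martingale concentration. The first step is to identify the transition rates: at any time $t$, each of the $s_1(t)$ currently open half-edges carries an independent rate-one clock, so a pairing occurs at instantaneous rate $s_1(t)$ and each such event decreases $s_1$ by exactly two. This yields the Doob-Meyer decomposition
\[s_1(t)=\ell_n-2\int_0^t s_1(u)\,du+M(t),\]
where $M$ is a c\`adl\`ag martingale whose jumps are all of size $-2$ and whose predictable quadratic variation satisfies $\langle M\rangle(t)=4\int_0^t s_1(u)\,du\leq 4\ell_n t$.

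Next I would derive the first-moment asymptotics. Taking expectations gives $\E[s_1(t)]=\ell_n-2\int_0^t\E[s_1(u)]\,du$, whose unique solution is $\E[s_1(t)]=\ell_n \e^{-2t}$, exactly matching the deterministic limit in \eqref{eqn:s-1-he}. To control fluctuations, set $Y(t):=s_1(t)-\ell_n \e^{-2t}$; the decomposition then reads $dY(t)=-2Y(t)\,dt+dM(t)$, and multiplying by the integrating factor $\e^{2t}$ gives $Y(t)=\e^{-2t}\int_0^t \e^{2u}\,dM(u)=:\e^{-2t}N(t)$. The process $N$ is again a square-integrable martingale, and from $\langle M\rangle(T)\leq 4\ell_n T$ we obtain $\langle N\rangle(T)\leq 4\e^{4T}\ell_n T=O(n)$ on any compact interval $[0,T]$.

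Doob's $L^2$ maximal inequality then yields $\E[\sup_{t\leq T}N(t)^2]=O(n)$, so that $\sup_{t\leq T}|Y(t)|\leq \sup_{t\leq T}|N(t)|=\OP(\sqrt{n})$. Dividing through by $\ell_n=\Theta(n)$ gives $\sup_{t\leq T}|s_1(t)/\ell_n-\e^{-2t}|=\OP(n^{-1/2})$, which is $\oP(n^{-\gamma})$ for every $\gamma<1/2$, and in particular for a suitable $\gamma\in(1/3,1/2)$ as required.

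I do not foresee a serious technical obstacle here: the drift equation is linear with constant coefficient, the jumps are uniformly bounded by $2$, and the compensator is controlled by the deterministic bound $s_1(t)\leq\ell_n$. The only point requiring genuine care is reading Algorithm~\ref{algo-dyn-cons} carefully to identify the pairing rate as $s_1(t)$ (rather than, say, $\binom{s_1(t)}{2}$), which holds because each ringing clock triggers exactly one pairing of its host half-edge with a uniformly chosen partner. Everything downstream is standard linear-SDE manipulation plus Doob's inequality, and in fact the argument yields the stronger rate $\OP(n^{-1/2})$ which is why any $\gamma$ strictly below $1/2$ is admissible.
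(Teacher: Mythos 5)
The paper does not give a proof of this lemma; it simply cites \cite[Lemma~8.2]{BBSX14} and remarks that the same argument goes through under Assumption~\ref{assumption2}, so there is nothing internal to compare against. Your self-contained proof is correct and in fact delivers the sharper rate $\OP(n^{-1/2})$. The rate identification is the crucial point and you get it right: with $s_1(t)$ open half-edges each carrying an independent unit-rate exponential clock, a pairing occurs at total rate $s_1(t)$ and each pairing removes exactly two half-edges, giving drift $-2s_1(t)$; one can also double-check by noting each unordered pair $(a,b)$ forms at rate $2/(s_1-1)$ and there are $\binom{s_1}{2}$ pairs, summing to $s_1$. The integrating-factor step $Y(t)=\e^{-2t}\int_0^t\e^{2u}\,dM(u)$ is clean, the bound $\langle M\rangle(T)\le 4\ell_nT$ follows from jump size $2$ and total jump rate $\le\ell_n$, and Doob's $L^2$-maximal inequality then gives $\sup_{t\le T}|s_1(t)-\ell_n\e^{-2t}|=\OP(\sqrt{n})$. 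One small thing worth flagging: the final division step uses $\ell_n=\Theta(n)$, which you invoke without comment; this holds because Assumption~\ref{assumption2}~\ref{assumption2-1} gives $\ell_n/n\to\E[D]$ and $\nu>1$ forces $\E[D]>0$, so it would be cleaner to say so explicitly. Beyond that the argument is complete and is the standard fluid-limit/martingale concentration route one expects here.
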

 Notice that the proof of \cite[Lemma 8.2]{BBSX14} is stated only under some more stringent assumptions, however the identical argument can be carried out under Assumption~\ref{assumption2}.
%and the fact that $|\ell_n/n-\mu| = o(n^{-\gamma})$.
The next proposition ensures that the graphs generated by percolation in Algorithm~\ref{algo:cons-perc} and the dynamic construction in Algorithm~\ref{algo-dyn-cons} are uniformly close in the critical window.
Define
\begin{equation}\label{defn:t-n-lambda}
t_n(\lambda)=\frac{1}{2}\log\bigg(\frac{\nu_n}{\nu_n-1}\bigg)+\frac{1}{2(\nu_n-1)}\frac{\lambda}{n^{1/3}}.
\end{equation}
\begin{proposition}\label{prop:coupling-whp} Fix $-\infty<\lambda_\star<\lambda^\star<\infty$. There exists a coupling such that with high probability
\begin{equation}
 \mathcal{G}_n(t_n(\lambda)-\varepsilon_n)\subset \mathrm{CM}_n(\bld{d},p_n(\lambda)) \subset\mathcal{G}_n(t_n(\lambda)+\varepsilon_n),\quad \forall \lambda \in [\lambda_\star,\lambda^\star]
\end{equation}where $\varepsilon_{n}=cn^{-\gamma_0}$, for some $1/3<\gamma_0<1/2$ and the constant $c$ does not depend on $\lambda$.
\end{proposition}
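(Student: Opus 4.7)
The plan is to couple the two constructions by sharing the same sequence of uniform pairings of open half-edges, and then control the difference between the natural edge-counting processes on the two sides. Both Algorithm~\ref{algo:cons-perc}~(S1) and Algorithm~\ref{algo:dyn-cons-alt}~(S1) generate their edges sequentially, each new edge being a uniform pairing of two open half-edges, and the distributional fact needed to identify the percolation process in this form is exactly Lemmas~\ref{lem:perc-cons-1}~and~\ref{lem:perc-cons-2}. I would therefore generate a single sequence of uniform pairings $(e_k,f_k)_{k\geq 1}$, independent of the exponential clocks driving Algorithm~\ref{algo-dyn-cons} and independent of the i.i.d.\ uniforms $(U_i)$ from Algorithm~\ref{algo:cons-perc}. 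Under this coupling,
\begin{equation}
\mathcal{G}_n(t)=\{(e_k,f_k):k\leq M_n(t)\},\qquad \mathrm{CM}_n(\bld{d},p_n(\lambda))=\{(e_k,f_k):k\leq K_n(\lambda)\},
\end{equation}
where $M_n(t)$ counts the events of the Poisson process $\Xi_n$ by time $t$ and $K_n(\lambda)=E_n(\lambda)\sim\mathrm{Bin}(\ell_n/2,p_n(\lambda))$. The asserted inclusions then reduce to showing
\begin{equation}\label{plan:inclusion-reduction}
M_n(t_n(\lambda)-\varepsilon_n)\leq K_n(\lambda)\leq M_n(t_n(\lambda)+\varepsilon_n),\qquad \forall\,\lambda\in[\lambda_\star,\lambda^\star],
\end{equation}
with high probability.

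The next step is to match the leading orders. Lemma~\ref{lem:total-open-he} yields $M_n(t)=(\ell_n/2)(1-\e^{-2t})+\oP(n^{1-\gamma})$ uniformly on $[0,T]$, while $\E[K_n(\lambda)]=(\ell_n/2)p_n(\lambda)$ with binomial fluctuations $\OP(\sqrt{n})$. The definition \eqref{defn:t-n-lambda} is chosen precisely so that $1-\e^{-2t_n(\lambda)}=p_n(\lambda)+o(n^{-1/3})$, hence $M_n(t_n(\lambda))$ and $\E[K_n(\lambda)]$ agree up to an additive $\oP(n^{1-\gamma})$ error. A first-order Taylor expansion gives
\begin{equation}
M_n(t_n(\lambda)\pm\varepsilon_n)-M_n(t_n(\lambda))=\pm(1-\nu^{-1})\ell_n\varepsilon_n(1+o(1))+\oP(n^{1-\gamma}),
\end{equation}
so the deterministic gap created by shifting time by $\varepsilon_n=cn^{-\gamma_0}$ is of order $n^{1-\gamma_0}$. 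Choosing $\gamma_0\in(1/3,\gamma)$ makes this gap dominate both the binomial fluctuations $\OP(\sqrt{n})$ of $K_n$ and the $\oP(n^{1-\gamma})$ error from Lemma~\ref{lem:total-open-he}, thereby establishing \eqref{plan:inclusion-reduction} at each fixed $\lambda$.

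To upgrade to uniformity in $\lambda$, I would exploit the monotonicity of $K_n(\cdot)$, $M_n(\cdot)$, and $t_n(\cdot)$. Partitioning $[\lambda_\star,\lambda^\star]$ into $O(n^{1/3})$ grid points with spacing $n^{-1/3}$, the increment of $\E[K_n]$ between consecutive grid points is $\Theta(n^{1/3})$, still much smaller than the gap $n^{1-\gamma_0}$ since $\gamma_0<2/3$. A union bound over the grid, together with monotonicity to fill in intermediate $\lambda$, then yields \eqref{plan:inclusion-reduction} uniformly with high probability.

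The main obstacle is arranging the edge-level coupling in the first step: it is crucial that the percolated graphs at different parameters can all be read off from the same sequential uniform-pairing construction, and this is where Lemmas~\ref{lem:perc-cons-1}~and~\ref{lem:perc-cons-2} do the heavy lifting. Once this coupling is in place, the rest of the argument is driven by the quantitative rate $\oP(n^{-\gamma})$ for $s_1(t)/\ell_n$ from Lemma~\ref{lem:total-open-he}; this rate is what forces $\varepsilon_n$ to be no smaller than $n^{-\gamma}$ and hence limits the achievable precision of the coupling within the critical window.
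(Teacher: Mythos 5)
Your proof is correct and follows essentially the same route as the paper. The coupling you describe (generate one sequence of uniform pairings, plus the independent randomness timing each side, so that both graphs appear as prefixes of the same pairing sequence, justified by Lemmas~\ref{lem:perc-cons-1} and~\ref{lem:perc-cons-2}) is exactly the paper's reduction of the graph inclusion to the edge-count inequality \eqref{eq:coup-reduc}. The quantitative comparison via Lemma~\ref{lem:total-open-he}, the expansion of $t_n(\lambda)$, and the observation that the deterministic gap of order $n^{1-\gamma_0}$ dominates both the $\OP(\sqrt{n})$ binomial fluctuations and the $o_{\sss\PR}(n^{1-\gamma})$ error also match the paper. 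The single cosmetic difference is in how uniformity over $\lambda\in[\lambda_\star,\lambda^\star]$ is handled for $K_n$: you propose a grid of spacing $n^{-1/3}$ plus monotonicity, while the paper invokes the DKW inequality for the empirical process $E_n(\lambda)=\#\{i:U_i\leq p_n(\lambda)\}$; both are valid and give the same uniform deviation.
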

\begin{proof}
Notice the similarity between Algorithm~\ref{algo:cons-perc}~(S1) and Algorithm~\ref{algo:dyn-cons-alt}~(S1). 
Let $\#\mathrm{E}(G)$ denote the number of edges in a graph $G$.
Suppose that we can show, as $n\to\infty$,
\begin{equation}\label{eq:coup-reduc}
 \PR\big(\#\rE(\mathcal{G}_n(t_n(\lambda)-\varepsilon_n))\leq \#\rE(\mathrm{CM}_n(\bld{d},p_n(\lambda))) \leq \#\rE(\mathcal{G}_n(t_n(\lambda)+\varepsilon_n)), \forall \lambda \in [\lambda_\star,\lambda^\star]\big) \to  1.
\end{equation} 
On the event $\{\#\rE(\mathrm{CM}_n(\bld{d},p_n(\lambda))) \leq \#\rE(\mathcal{G}_n(t_n(\lambda)+\varepsilon_n)), \forall \lambda \in [\lambda_\star,\lambda^\star]\}$, the choice of the uniform pair of half-edges at the $k^{th}$ pairing in Algorithm~\ref{algo:cons-perc}~(S1)  can be taken to be exactly same as the $k^{th}$ pairing in  Algorithm~\ref{algo:dyn-cons-alt}~(S1).
 Under the above coupling $\mathrm{CM}_n(\bld{d},p_n(\lambda_\star)) \subset\mathcal{G}_n(t_n(\lambda_\star)+\varepsilon_n)$.
 Moreover, since $\#\rE(\mathrm{CM}_n(\bld{d},p_n(\lambda)))$ is dominated by  $\#\rE(\mathcal{G}_n(t_n(\lambda)+\varepsilon_n))$, uniformly over  $\lambda \in [\lambda_\star,\lambda^\star]$,  
the above coupling  also yields that $ \mathrm{CM}_n(\bld{d},p_n(\lambda)) \subset\mathcal{G}_n(t_n(\lambda)+\varepsilon_n)$ for all $\lambda\in [\lambda_\star,\lambda^\star]$. Further, on the event $\{ \#\rE(\mathcal{G}_n(t_n(\lambda)-\varepsilon_n))\leq\#\rE(\mathrm{CM}_n(\bld{d},p_n(\lambda))) , \forall \lambda \in [\lambda_\star,\lambda^\star]\}$, under the same coupling, $\mathcal{G}_n(t_n(\lambda)-\varepsilon_n)\subset\mathrm{CM}_n(\bld{d},p_n(\lambda)) $ for all $\lambda\in [\lambda_\star,\lambda^\star]$. 
Thus, it remains to show \eqref{eq:coup-reduc}. An application of Lemma~\ref{lem:total-open-he} along with \eqref{defn:t-n-lambda} yields, for some $1/3<\gamma_0<\gamma<1/2$, with high probability,
\begin{equation}\label{edges-dyn-cons}
\bigg| \#\mathrm{E}(\mathcal{G}_n(t_n(\lambda))) - \bigg(\frac{\ell_n}{2\nu_n} +\frac{\lambda\ell_n}{2\nu_nn^{1/3}} +\frac{n\varepsilon_n(\nu_n-1)}{\nu_n}\bigg)\bigg| \leq n^{1-\gamma}, \quad \lambda \in [\lambda_\star,\lambda^\star].
\end{equation}
Notice that the total number of half-edges in $\mathrm{CM}_n(\bld{d},p_n(\lambda))$ follows a binomial distribution with parameters $\ell_n/2$ and $p_n(\lambda)$. Thus, with high probability,
\begin{equation}\label{edges-perc}
 \bigg|\#\mathrm{E}(\mathrm{CM}_n(\bld{d},p_n(\lambda)))-\bigg(\frac{\ell_n}{2\nu_n} +\frac{\lambda\ell_n}{2\nu_nn^{1/3}}\bigg)\bigg|\leq n^{1-\gamma}, \quad \lambda \in [\lambda_\star,\lambda^\star].
\end{equation}The fact that the error can be chosen to be uniform over $\lambda\in [\lambda_\star,\lambda^\star]$ follows from the DKW inequality \cite{M90}. Thus,  \eqref{edges-dyn-cons} and \eqref{edges-perc} together show that, with high probability,
\begin{equation}
 \#\rE(\mathrm{CM}_n(\bld{d},p_n(\lambda))) \leq \#\rE(\mathcal{G}_n(t_n(\lambda)+\varepsilon_n)), \quad \forall \lambda \in [\lambda_\star,\lambda^\star].
\end{equation}The other part follow similarly and the proof is now complete.
\end{proof}
\begin{remark}\label{rem:modified-prop-coup} \normalfont Notice that the proof of Proposition~\ref{prop:coupling-whp} can be directly modified to show that there exists a coupling such that, with high probability,
\begin{equation}
 \mathrm{CM}_n(\bld{d},p_n(\lambda)-\varepsilon_n)\subset \mathcal{G}_n(t_n(\lambda))\subset \mathrm{CM}_n(\bld{d},p_n(\lambda)+\varepsilon_n),\quad \forall \lambda \in [\lambda_\star,\lambda^\star]
\end{equation}where $\varepsilon_{n}=cn^{-\gamma_0}$, for some $1/3<\gamma_0<1/2$ and the constant $c$ does not depend on~$\lambda$. Therefore, the scaling limits of different functionals like re-scaled component-sizes, surplus edges for $\mathcal{G}_n(t_n(\lambda))$ and $\mathrm{CM}_n(\bld{d},p_n(\lambda))$ are the same. 
\end{remark}
\subsection{The modified process}\label{sec:modified-C1}
From here onward, we often augment $\lambda$ to a predefined notation to emphasize the dependence on~$\lambda$. 
We write $\mathscr{C}_{\sss (i)}(\lambda)$  for the $i^{th}$ largest component of $\mathcal{G}_n(t_n(\lambda))$ and define 
\begin{equation}
\mathcal{O}_i(\lambda)=\# \text{ open half-edges in }\mathscr{C}_{\sss (i)}(\lambda).
\end{equation}
Think of $\mathcal{O}_i(\lambda)$ as the \emph{mass} of the component $\mathscr{C}_{\sss (i)}(\lambda)$. 
Let $\mathbf{C}_n(\lambda) = (n^{-2/3}|\mathscr{C}_{\sss (i)}(\lambda)|)_{i\geq 1}$, and $\mathbf{O}_n(\lambda) = (n^{-2/3}\mathcal{O}_i(\lambda))_{i \geq 1}$.
 Let $\ell_n^o(\lambda) = \sum_{i\geq 1}\mathcal{O}_i(\lambda)$. By Lemma~\ref{lem:total-open-he} and \eqref{defn:t-n-lambda}, $\ell_n^o(\lambda) \approx n\mu(\nu-1)/\nu$. Now, observe that, during the evolution of the graph process generated  by Algorithm~\ref{algo-dyn-cons}, between time $[t_n(\lambda),t_n(\lambda+\dif \lambda)]$, the $i^{th}$ and $j^{th}$ ($i> j$) largest components, merge at rate 
 \begin{equation}\label{rate:function}
2\mathcal{O}_{i}(\lambda) \mathcal{O}_{j}(\lambda)\times\frac{1}{\ell_n^o(\lambda)-1}\times \frac{1}{2(\nu_n-1)n^{1/3}}\approx \frac{\nu}{\mu(\nu-1)^2} \big(n^{-2/3}\mathcal{O}_{i}(\lambda)\big)\big(n^{-2/3}\mathcal{O}_{j}(\lambda)\big),
\end{equation}and creates a component with open half-edges $\mathcal{O}_{i}(\lambda)+\mathcal{O}_{j}(\lambda)-2$.
Thus $(\mathbf{O}_n(\lambda))_{\lambda\in\R}$ does \emph{not} evolve as a multiplicative coalescent, but it is close. 
The fact that two half-edges are killed after pairing, makes the masses (the number of open half-edges) of the components  and the system to deplete. 
If there were no such depletion of mass, then the vector of open half-edges would in fact  merge as multiplicative coalescent. 
Let us formalize this  idea below:
\begin{algo}\label{algo:modify-dyn-cons} \normalfont Initialize $\bar{\mathcal{G}}_n(t_n(\lambda_\star)) = \mathcal{G}_n(t_n(\lambda_\star))$.  Let $\mathscr{O}$ denote the set of open half-edges in the graph $\mathcal{G}_n(t_n(\lambda_\star))$, $\bar{s}_1 = |\mathscr{O}|$ and $\bar{\Xi}_n$ denote a Poisson process with rate $\bar{s}_1$. At each event time of the Poisson process $\bar{\Xi}_n$, select two half-edges from $\mathscr{O}$ and create an edge between the corresponding vertices. However, the selected half-edges are kept alive, so that they can be selected again.
\end{algo} 
\begin{remark}\label{rem:modify-AMC}\normalfont The only difference between Algorithm~\ref{algo:dyn-cons-alt} and Algorithm~\ref{algo:modify-dyn-cons}, is that the \emph{paired} half-edges are not discarded and thus more edges are created by Algorithm~\ref{algo:modify-dyn-cons}. Thus, there is a natural coupling between the graphs generated by Algorithms~\ref{algo:dyn-cons-alt}~and~\ref{algo:modify-dyn-cons} such that $\mathcal{G}_n(t_n(\lambda))\subset \bar{\mathcal{G}}_n(t_n(\lambda))$ for all $\lambda\in [\lambda_\star,\lambda^\star]$, with probability one. In the subsequent part of this section, we always work under this coupling. The extra edges that are created by Algorithm~\ref{algo:modify-dyn-cons} will be called \emph{bad} edges.
\end{remark}
\begin{remark} \label{rem:MC-exact-limit}\normalfont In the subsequent part of this paper, we shall augment a predefined notation with a bar to denote the corresponding quantity for $\bar{\mathcal{G}}_n(t_n(\lambda))$. 
 Denote $\beta_n = (\bar{s}_1(\nu_n-1)n^{1/3})^{1/2}$ and $\bar{\mathbf{O}}_n'(\lambda)$ denote the vector $\ord((\beta_n^{-1}\bar{\mathcal{O}}_i(\lambda))_{i\geq 1})$.  
 By the description in Algorithm~\ref{algo:modify-dyn-cons}, $(\bar{\mathbf{O}}_n'(\lambda))_{\lambda\geq \lambda_\star}$ evolves as a standard multiplicative coalescent.
Further, note that there exists a constant $c>0$ such that $\beta_n = cn^{2/3}(1+\oP(1))$ which enables us to deduce the scaling limit results for $(\bar{\mathbf{O}}_n(\lambda))_{\lambda\geq \lambda_\star}$ from $(\bar{\mathbf{O}}_n'(\lambda))_{\lambda\geq \lambda_\star}$.
\end{remark}

\subsubsection*{Multiplicative coalescent with mass and weight}
The Feller property of the multiplicative coalescent \cite[Proposition 5]{A97} ensures the joint convergence of the number of open half-edges in each component of $\bar{\mathcal{G}}_n(t_n(\lambda))$ at multiple values of $\lambda$ as we shall see below. 
To deduce the scaling limits involving the components sizes let us consider a dynamic process that is further augmented by a certain weight. 
Initially, the system consists of particles (possibly infinitely many) where particle $i$ has mass $x_i$, and weight $z_i$. 
Let $(X_i(t),Z_i(t))_{i\geq 1}$ denote the vector of masses, and weights at time $t$. 
The dynamics of the system is described as follows:
\begin{itemize}
\item[]   At time~$t$, particles $i$ and $j$ coalesce at rate $X_i(t)X_j(t)$ and create a particle with mass $X_i(t)+X_j(t)$, and weight $Z_i(t)+Z_j(t)$.
\end{itemize}
Denote by  $\mathrm{MC}_2(\mathbf{x},\mathbf{z},t)$ the vector $(X_i(t), Z_i(t))_{i\geq 1}$ with initial mass $\mathbf{x}$, and weight $\mathbf{z}$. 
We shall need the following theorem:
\begin{theorem}\label{thm:AMC-2D}
Suppose that $(\mathbf{x}_n,\mathbf{z}_n) \to (\mathbf{x},\mathbf{x})$ in $(\ell^2_{\shortarrow})^2$. Then, for any $t\geq 0$
\begin{equation}
\mathrm{MC}_2(\mathbf{x}_n,\mathbf{z}_n,t)\dto \mathrm{MC}_2(\mathbf{x},\mathbf{x},t).
\end{equation}
\end{theorem}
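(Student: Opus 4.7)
The strategy is to reduce to Aldous's Feller property for the classical one-dimensional multiplicative coalescent. Observe first that when initial mass and weight coincide, they remain equal throughout the evolution, so $\mathrm{MC}_2(\mathbf{x}_n,\mathbf{x}_n,t)=(\mathrm{MC}(\mathbf{x}_n,t),\mathrm{MC}(\mathbf{x}_n,t))$, and the Feller property \cite[Proposition~5]{A97} immediately gives $\mathrm{MC}_2(\mathbf{x}_n,\mathbf{x}_n,t)\dto \mathrm{MC}_2(\mathbf{x},\mathbf{x},t)$ in $(\ell^2_{\shortarrow})^2$. It therefore suffices to show that $\mathrm{MC}_2(\mathbf{x}_n,\mathbf{z}_n,t)$ and $\mathrm{MC}_2(\mathbf{x}_n,\mathbf{x}_n,t)$ can be coupled so that their distance in $(\ell^2_{\shortarrow})^2$ tends to zero in probability.

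The coupling I would use is Aldous's graphical construction: attach to each unordered pair $\{i,j\}$ an independent $\mathrm{Exp}(x_{n,i}x_{n,j})$ clock, and let $\pi_t$ be the partition into connected components of the random graph whose edges are the pairs whose clock is at most~$t$. Under this coupling the mass vectors of the two processes coincide identically, and the two weight vectors differ cluster by cluster by $\sum_{i\in C}\epsilon_{n,i}$ with $\epsilon_{n,i}:=z_{n,i}-x_{n,i}$. Since decreasing rearrangement contracts the $\ell^2$ distance between nonnegative sequences (Hardy--Littlewood--P\'olya), the $\ell^2_{\shortarrow}$ distance between the sorted weight vectors is bounded by $\bigl(\sum_{C\in\pi_t}(\sum_{i\in C}\epsilon_{n,i})^2\bigr)^{1/2}$, and the problem reduces to showing this random quantity is $\oP(1)$.

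To control this sum I would truncate at a cutoff~$K$. The head piece $\sum_C\bigl(\sum_{i\in C\cap[K]}\epsilon_{n,i}\bigr)^2$ is at most $K\|\boldsymbol{\epsilon}_n\|_{\ell^2}^2$ by Cauchy--Schwarz and vanishes as $n\to\infty$ for every fixed~$K$, since $\mathbf{z}_n-\mathbf{x}_n\to 0$ in $\ell^2$. For the tail I would use $|\epsilon_{n,i}|\le x_{n,i}+z_{n,i}$ and pass to expectations:
\[\mathbbm{E}\biggl[\sum_C\Bigl(\sum_{i\in C,\,i>K}\epsilon_{n,i}\Bigr)^2\biggr]\le\sum_{i,j>K}(x_{n,i}+z_{n,i})(x_{n,j}+z_{n,j})\,\PR(i\sim_t j),\]
where $i\sim_t j$ denotes membership in a common cluster of $\pi_t$. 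The leading direct-edge estimate $\PR(i\sim_t j)\le tx_{n,i}x_{n,j}$ makes the right-hand side equal to $t\bigl(\sum_{i>K}(x_{n,i}+z_{n,i})x_{n,i}\bigr)^2$, which by Cauchy--Schwarz is bounded by $t\bigl(\sum_{i>K}(x_{n,i}+z_{n,i})^2\bigr)\bigl(\sum_{i>K}x_{n,i}^2\bigr)$ and vanishes as $K\to\infty$ uniformly in~$n$; longer self-avoiding paths add further powers of $t\|\mathbf{x}_n\|_{\ell^2}^2$ while preserving the quadratic structure in tail $\ell^2$ norms.

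The main obstacle is that the naive geometric path expansion of $\PR(i\sim_t j)$ converges only for $t\|\mathbf{x}\|_{\ell^2}^2<1$, so the above argument handles the subcritical regime cleanly but degenerates at large~$t$, where a \emph{giant cluster} may appear in the percolation graph. This is precisely the difficulty that Aldous overcomes in proving his Feller property, and I would adapt his path-enumeration or exchangeability estimates here; crucially, the auxiliary weight coordinate contributes no new combinatorial difficulty because the coupling graph depends only on $\mathbf{x}_n$. Taking $n\to\infty$ first and then $K\to\infty$ then closes the argument.
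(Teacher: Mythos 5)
Your reduction to comparing $\mathrm{MC}_2(\mathbf{x}_n,\mathbf{z}_n,t)$ with $\mathrm{MC}_2(\mathbf{x}_n,\mathbf{x}_n,t)$ is correct, as is the observation that the graphical coupling (driven only by the mass coordinate) identifies the two partition processes, so the $\ell^2_{\shortarrow}$ distance of the sorted weight vectors is, by Hardy--Littlewood--P\'olya, at most $\bigl(\sum_{C}(\sum_{i\in C}\epsilon_{n,i})^2\bigr)^{1/2}$. The head bound via Cauchy--Schwarz is also fine. The problem is the tail. As you yourself note, the expectation/path-expansion bound on $\PR(i\sim_t j)$ only closes when $t\|\mathbf{x}_n\|_{\ell^2}^2<1$, and in the application ($t$ corresponds to an arbitrary location in the scaling window) there is no way to arrange this. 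This is not a technicality: for supercritical $t$ the quantity $\sum_{i,j>K}|\epsilon_{n,i}||\epsilon_{n,j}|\PR(i\sim_t j)$ genuinely cannot be controlled by tail $\ell^2$ norms of $\mathbf{x}_n$ and $\boldsymbol{\epsilon}_n$ alone, because two far-out indices $i,j>K$ can both attach to a macroscopic cluster built from head indices, making $\PR(i\sim_t j)$ of order $1$ rather than of order $tx_{n,i}x_{n,j}$ times a bounded factor.

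Your proposed escape --- ``adapt Aldous's path-enumeration or exchangeability estimates'' --- does not point at the right tool. Aldous's proof of the Feller property \cite[Proposition~5]{A97} does not work through a convergent path expansion in the supercritical regime; it uses the \emph{subgraph coupling} together with the monotonicity bound \cite[Corollary~18~(a)]{A97}, which says that if one multiplicative coalescent refines another then the squared $\ell^2$ distance between them is at most the difference of their squared $\ell^2$ norms. The paper's proof of this theorem runs exactly that machinery: set $\mathbf{w}_n^+=\mathrm{ord}(x_i^n\vee z_i^n)$ and $\mathbf{w}_n^-=\mathrm{ord}(x_i^n\wedge z_i^n)$, note that both converge to $\mathbf{x}$ in $\ell^2_{\shortarrow}$, so by the one-dimensional Feller property $\mathrm{MC}_2(\mathbf{w}_n^\pm,\mathbf{w}_n^\pm,t)\dto\mathrm{MC}_2(\mathbf{x},\mathbf{x},t)$; then, under the subgraph coupling, $\|\mathrm{MC}_2(\mathbf{w}_n^-,\mathbf{w}_n^-,t)\|_{\sss 22}^2\le\|\mathrm{MC}_2(\mathbf{x}_n,\mathbf{z}_n,t)\|_{\sss 22}^2\le\|\mathrm{MC}_2(\mathbf{w}_n^+,\mathbf{w}_n^+,t)\|_{\sss 22}^2$ and Corollary~18~(a) turns the vanishing gap between the outer two norms into a vanishing $\ell^2$ distance from the middle process to the outer one. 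This sandwich requires no connectivity estimate of the form $\PR(i\sim_t j)$ at all, and works for every $t$. To make your argument go through you would have to replace the tail expectation estimate by such a monotonicity/sandwich device; without it there is a genuine gap.
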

\begin{proof}
For $\mathbf{x}_n = (x_i^n)_{i\geq 1}$ and $\mathbf{z}_n = (z_i^n)_{i\geq 1}$,  let $\mathbf{w}_n^+ = \mathrm{ord}(x_i^n\vee z_i^n)$, $\mathbf{w}_n^-=\mathrm{ord}(x_i^n\wedge z_i^n)$, where $\mathrm{ord}$ denotes the decreasing ordering of the elements. 
Notice that $\mathbf{w}_n^+ \to \mathbf{x}$, and $\mathbf{w}_n^- \to \mathbf{x}$ in $\ell^2_{\shortarrow}$.
Using the Feller property of the multiplicative coalescent \cite[Proposition 5]{A97}, it follows that
\begin{equation}\label{limit-ub-lb}
 \mathrm{MC}_2(\mathbf{w}_n^+,\mathbf{w}_n^+,t)\dto  \mathrm{MC}_2(\mathbf{x},\mathbf{x},t), \quad \text{and} \quad \mathrm{MC}_2(\mathbf{w}_n^-,\mathbf{w}_n^-,t)\dto  \mathrm{MC}_2(\mathbf{x},\mathbf{x},t),
\end{equation}with respect to the $(\ell^2_{\shortarrow})^2$ topology. 
Suppose that $\mathrm{MC}_2(\mathbf{w}_n^+,\mathbf{w}_n^+,t)$ and $\mathrm{MC}_2(\mathbf{w}_n^-,\mathbf{w}_n^-,t)$ are coupled through the subgraph coupling (see \cite[Page 838]{A97}). 
For $(\mathbf{x},\mathbf{z})\in (\ell_{\shortarrow}^2)^2$, denote $\|(\mathbf{x},\mathbf{z})\|_{\sss 22} = (\sum_{i\geq 1}x_i^2)^{1/2}+(\sum_{i\geq 1}z_i^2)^{1/2}$.
Under the subgraph coupling, \eqref{limit-ub-lb} yields
\begin{equation}
\|\mathrm{MC}_2(\mathbf{w}_n^+,\mathbf{w}_n^+,t)\|_{\sss 22}^2-\|\mathrm{MC}_2(\mathbf{w}_n^-,\mathbf{w}_n^-,t)\|_{\sss 22}^2 \pto 0.
\end{equation}
Moreover,
\begin{equation}
 \|\mathrm{MC}_2(\mathbf{w}_n^-,\mathbf{w}_n^-,t)\|_{\sss 22}^2 \leq \|\mathrm{MC}_2(\mathbf{x}_n,\mathbf{z}_n,t)\|_{\sss 22}^2 \leq \|\mathrm{MC}_2(\mathbf{w}_n^+,\mathbf{w}_n^+,t)\|_{\sss 22}^2.
\end{equation}
Hence, using \cite[Corollary~18~(a)]{A97}, under the subgraph coupling,
\begin{equation}
 \|\mathrm{MC}_2(\mathbf{w}_n^+,\mathbf{w}_n^+,t) - \mathrm{MC}_2(\mathbf{x}_n,\mathbf{z}_n,t)\|_{\sss 22}^2\leq \|\mathrm{MC}_2(\mathbf{w}_n^+,\mathbf{w}_n^+,t)\|_{\sss 22}^2 - \|\mathrm{MC}_2(\mathbf{x}_n,\mathbf{z}_n,t)\|_{\sss 22}^2 \pto 0,
\end{equation}and the proof follows.
\end{proof}
\subsection{Asymptotics for the open half-edges}\label{sec:open-he}
In this section, we show that the open half-edges in the components of $\mathcal{G}_n(t_n(\lambda))$ are \emph{approximately} proportional to the component sizes. 
This will enable us to apply Theorem~\ref{thm:AMC-2D} for deducing the scaling limits of the required quantities for the graph $\bar{\mathcal{G}}_n(t_n(\lambda))$.
\begin{lemma}\label{thm:open-comp}
 There exists a constant $\kappa > 0$ such that, for any $\lambda\in\R$ and $i\geq 1$,
 \begin{equation}\label{open-he}
  \mathcal{O}_i(\lambda)= \kappa |\mathscr{C}_{\sss (i)}(\lambda)|+o_{\sss \PR}(b_n).
 \end{equation}Further, $(\mathbf{O}_n(\lambda))_{n\geq 1}$ is tight in  $\ell^2_{\shortarrow}$ and consequently $n^{-4/3}\sum_{i\geq 1} (\mathcal{O}_i(\lambda)- \kappa |\mathscr{C}_{\sss (i)}(\lambda)|)^2 \pto 0$. 
\end{lemma}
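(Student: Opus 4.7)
The plan is to work through the coupling of Remark~\ref{rem:modified-prop-coup} that sandwiches $\mathcal{G}_n(t_n(\lambda))$ between $\mathrm{CM}_n(\boldsymbol{d},p_n(\lambda\pm \varepsilon_n))$ with $\varepsilon_n = o(1)$, so that the open half-edges in a component of $\mathcal{G}_n(t_n(\lambda))$ may be identified with the half-edges of the corresponding percolation cluster that were detached during percolation. The starting identity is
\begin{equation*}
\mathcal{O}_i(\lambda) = \sum_{v \in \mathscr{C}_{\sss (i)}(\lambda)} d_v - 2\,e(\mathscr{C}_{\sss (i)}(\lambda)) = \sum_{v \in \mathscr{C}_{\sss (i)}(\lambda)} d_v - 2|\mathscr{C}_{\sss (i)}(\lambda)| - 2\,\surp{\mathscr{C}_{\sss (i)}(\lambda)} + 2,
\end{equation*}
where $e(\cdot)$ denotes the number of edges. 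Since the surplus term is $\OP(1)$ for each fixed $i$ by Theorem~\ref{thm_surplus}, the lemma reduces to showing $\sum_{v \in \mathscr{C}_{\sss (i)}(\lambda)} d_v = (\sigma_2/\mu)\,|\mathscr{C}_{\sss (i)}(\lambda)| + \oP(n^{2/3})$, after which $\kappa := \sigma_2/\mu - 2 = \nu - 1 > 0$.

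The heart of the argument is a double application of Lemma~\ref{lem_general} to the exploration of $\mathrm{CM}_{\tilde{n}}(\Mtilde{\boldsymbol{d}})$, using Janson's decomposition (Algorithm~\ref{algo:3}) that realizes $\mathrm{CM}_n(\boldsymbol{d},p_n(\lambda))$ as $\mathrm{CM}_{\tilde{n}}(\Mtilde{\boldsymbol{d}})$ with its $n_+$ red degree-one vertices removed. Since $\Mtilde{\boldsymbol{d}}$ satisfies Assumption~\ref{assumption1} almost surely by Lemma~\ref{lem_percolation_condition}, Lemma~\ref{lem_general} applies. Applied with $w_v = \mathbbm{1}\{v\in[n]\}$ it yields $|\mathscr{C}_{\sss (i)}(\lambda)| = \sqrt{p_n}\,|\tilde{\mathscr{C}}_{\sss (i)}| + \oP(n^{2/3})$ (matching \eqref{degree_one_vertices}); applied with $w_v = d_v\mathbbm{1}\{v\in[n]\}$---whose size-biased rate $\gamma_n = \sum_{v \in [n]} d_v \tilde{d}_v / \tilde{\ell}_n$ concentrates almost surely around $\sqrt{p}\,\sigma_2/\mu$ by an Azuma-type bound identical to the one proving Lemma~\ref{lem_percolation_condition}---it yields $\sum_{v \in \tilde{\mathscr{C}}_{\sss (i)} \cap [n]} d_v = \sqrt{p_n}\,(\sigma_2/\mu)\,|\tilde{\mathscr{C}}_{\sss (i)}| + \oP(n^{2/3})$. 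The analog of Lemma~\ref{lem:large-com-explored-early} for $\Mtilde{\boldsymbol{d}}$ guarantees that the first few explored components of $\mathrm{CM}_{\tilde{n}}(\Mtilde{\boldsymbol{d}})$ are the largest, and dividing the two estimates cancels the factor $\sqrt{p_n}$ and delivers the required estimate, proving \eqref{open-he}.

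For $\ell^2_{\shortarrow}$-tightness of $\mathbf{O}_n(\lambda)$, the deterministic bound $\mathcal{O}_i(\lambda)\leq \sum_{v \in \mathscr{C}_{\sss (i)}(\lambda)} d_v$ combined with the size-biased point-process argument of Section~\ref{sec_l2_tightness}, transposed to the exploration of $\mathrm{CM}_{\tilde{n}}(\Mtilde{\boldsymbol{d}})$ with total-degree weights on original vertices, produces a tight $\ell^2_{\shortarrow}$-majorant. The $\ell^2$-consequence $n^{-4/3}\sum_{i \geq 1}(\mathcal{O}_i(\lambda) - \kappa|\mathscr{C}_{\sss (i)}(\lambda)|)^2 \pto 0$ then follows from coordinate-wise convergence \eqref{open-he} combined with convergence of the $\ell^2$-norms of $\mathbf{O}_n(\lambda)$ and $\kappa\,\mathbf{C}_n(\lambda)$, via the standard Hilbert-space fact that $\ell^2$-tightness, coordinatewise convergence, and norm convergence together force strong $\ell^2$-convergence. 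The main obstacle lies precisely in the norm-convergence step: the ratio $\mathcal{O}_i/|\mathscr{C}_{\sss (i)}|$ equals $\kappa$ only asymptotically on large components (for instance an isolated vertex of degree $d$ has $\mathcal{O}_i = d$ but $|\mathscr{C}_{\sss (i)}| = 1$), so one must show that small components contribute $\oP(n^{4/3})$ to both $\sum_i \mathcal{O}_i^2$ and $\kappa^2 \sum_i |\mathscr{C}_{\sss (i)}|^2$. This tail bound proceeds along the lines of the proof of Lemma~\ref{sufficient-U0-conv-condn}, exploiting the $\ell^2_{\shortarrow}$-tail control on $\mathbf{C}_n(\lambda)$ provided by Theorem~\ref{thm_percolation} together with the degree-sum majorization.
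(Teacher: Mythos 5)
Your derivation of \eqref{open-he} is essentially the paper's, modulo a cosmetic difference in bookkeeping: you write $\mathcal{O}_i = \sum_{v\in\mathscr{C}_{\sss (i)}} d_v - 2|\mathscr{C}_{\sss (i)}| - 2\surp{\mathscr{C}_{\sss (i)}} + 2$ and control the surplus term via Theorem~\ref{thm_surplus}, whereas the paper writes $\mathcal{O}_i^p = \sum_{k\in\tilde{\mathscr{C}}\cap[n]}d_k - \sum_{k\in\tilde{\mathscr{C}}\cap[n]}\tilde{d}_k + R(\tilde{\mathscr{C}})$ and applies Lemma~\ref{lem_general} to all three sums with weights $d_i$, $\tilde{d}_i$, and the number of red neighbors. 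Both are routes to the same asymptotic constant $\kappa = \nu - 1$, and both correctly invoke the sandwiching coupling of Remark~\ref{rem:modified-prop-coup} and the percolated analogue of Lemma~\ref{lem:large-com-explored-early}.

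The genuine gap is in the $\ell^2_{\shortarrow}$-tightness step. You reduce to controlling $D_i := \sum_{v\in\tilde{\mathscr{C}}_i\cap[n]}d_v$ and propose to get a tight $\ell^2_{\shortarrow}$-majorant by ``the size-biased point-process argument of Section~\ref{sec_l2_tightness}, transposed \dots with total-degree weights on original vertices.'' This does not go through as stated: in the exploration of $\mathrm{CM}_{\tilde{n}}(\Mtilde{\boldsymbol{d}})$ the components are necessarily visited in size-biased order with sizes $\tilde{D}_i=\sum_{v\in\tilde{\mathscr{C}}_i}\tilde{d}_v$, so the exponential clocks in Aldous's construction have rates $\tilde{D}_i$, not $D_i$. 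The point collection $\{(\text{position},D_i)\}$ is therefore not a size-biased point process with respect to its own weights, and Aldous's Lemma~14 (which gives tightness from such convergence) does not apply to it directly. The issue is not cosmetic: a vertex of moderately large original degree $d_v$ can have almost all its half-edges detached, giving a component with large $D_i$ but tiny $\tilde{D}_i$ and tiny component size, and one must argue that such components cannot contribute noticeably to $\sum_i D_i^2$. The paper closes this by an explicit comparison argument: under the conditional measure $\tilde{\PR}$, $\tilde{D}_i\sim\mathrm{Bin}(D_i,\sqrt{p_n})$, so a Chernoff bound gives $D_i \leq a\tilde{D}_i$ whp uniformly over components with $D_i > n^{\varepsilon}$, while components with $D_i\leq n^{\varepsilon}$ contribute only $O(n^{1+\varepsilon})=o(n^{4/3})$ to the squared $\ell^2$-norm. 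Tightness of $\ord(D_i)$ then follows from that of $\ord(\tilde{D}_i)$ (which is \eqref{eqn_tightness} applied to $\mathrm{CM}_{\tilde{n}}(\Mtilde{\boldsymbol{d}})$). This transfer step is missing from your argument and cannot simply be absorbed by re-running the point-process machinery.

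A smaller remark: in the last paragraph you treat ``norm convergence'' as an extra ingredient and identify it as the main obstacle. In fact, once one has $\ell^2_{\shortarrow}$-tightness of $\mathrm{ord}(\mathbf{O}_n)$ and of $\mathbf{C}_n$, together with coordinatewise convergence \eqref{open-he}, the $\ell^2$-conclusion is immediate: $\sum_{i>K}(\mathcal{O}_i - \kappa|\mathscr{C}_{\sss (i)}|)^2 \leq 2\sum_{i>K}\mathcal{O}_i^2 + 2\kappa^2\sum_{i>K}|\mathscr{C}_{\sss (i)}|^2$, and both tails are uniformly small by tightness. So the real difficulty is entirely concentrated in the tightness step, not in a separate norm-convergence statement.
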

\begin{proof} Let $(d_k^\lambda)_{k\in [n]}$ denote the degree sequence of $\mathrm{CM}_n(\bld{d},p_n(\lambda))$ and define
\begin{equation}
\mathcal{O}_i^p(\lambda) = \sum_{k\in \mathscr{C}_{\sss (i)}^p(\lambda)}(d_k-d_k^\lambda) = \sum_{k\in \mathscr{C}_{\sss (i)}^p(\lambda)}d_k-2(|\mathscr{C}_{\sss (i)}^p(\lambda)|-1+\mathrm{SP}(\mathscr{C}_{\sss (i)}^p(\lambda))).
\end{equation}
 Using Remark~\ref{rem:modified-prop-coup} and the fact that the surplus edges in the large components is tight, it is enough to prove the lemma by replacing $\mathcal{O}_i(\lambda)$ by $\mathcal{O}_i^p(\lambda)$ and $\mathscr{C}_{\sss (i)}(\lambda)$ by $\mathscr{C}_{\sss (i)}^p(\lambda)$.
For a component $\tilde{\mathscr{C}}$ of $\mathrm{CM}_{\tilde{n}}(\Mtilde{\boldsymbol{d}})$, the corresponding component $\tilde{\mathscr{C}}^p$ in the percolated graph is obtained by cleaning up $R(\tilde{\mathscr{C}})$ red degree-one vertices, see Algorithm~\ref{algo:3}. 
Thus, the number of open half-edges in $\tilde{\mathscr{C}}^p$ is given by 
\begin{equation}\label{relation-deg-def}
 \sum_{k\in \tilde{\mathscr{C}}\cap [n]}d_k-\sum_{k\in \tilde{\mathscr{C}}\cap [n]}\tilde{d}_k+R(\tilde{\mathscr{C}}).
\end{equation} 
Now, all the three terms appearing in the right hand side of \eqref{relation-deg-def} can be estimated using Lemma~\ref{lem_general}. 
Indeed, we can consider weights $w_{i1} =d_i$, $w_{i2}=\tilde{d}_i$, and $w_{i3}=$ the number of red neighbors of vertex $i$ in $\mathrm{CM}_{\tilde{n}}(\Mtilde{\bld{d}})$. The conditions in~\eqref{eq:conditions-size-biased} are satisfied by Lemma~\ref{lem_percolation_condition}, and observing that 
\begin{equation}
 \max\{\max_iw_{i1}, \max_iw_{i2}, \max_{i}w_{i3}\}\leq d_{\max} = o(n^{1/3}).
\end{equation}
Note that, using an argument identical to Lemma~\ref{lem_percolation_condition}, $(1/n) \sum_{i\in [\tilde{n}]}w_{ik}\tilde{d}_i$  converges $\PR_p$ almost surely, for all $k=1,2,3$. 
Now, \eqref{open-he} is a consequence of Lemma~\ref{lem:large-com-explored-early}.
Denote 
\begin{equation}
D_i = \sum_{k\in \tilde{\mathscr{C}}_{\sss (i)}\cap [n]}d_k, \quad\tilde{D}_i = \sum_{k\in \tilde{\mathscr{C}}_{\sss (i)}\cap [n]}\tilde{d}_k,\quad \mathbf{D}_n = \mathrm{ord}((D_i)_{i\geq 1}),\quad \text{and}\quad \tilde{\mathbf{D}}_n = \mathrm{ord}((\tilde{D}_i)_{i\geq 1}).
\end{equation} 
Using \eqref{eqn_tightness}, $(\tilde{\mathbf{D}}_n)_{n\geq 1}$ is tight in $\ell^2_{\shortarrow}$. Further $w_{i3}\leq d_i$ for all $i$. Thus, for the $\ell^2_{\shortarrow}$ tightness of $(\mathbf{O}_n(\lambda))_{n\geq 1}$, it is enough to show the $\ell^2_{\shortarrow}$ tightness of $(\mathbf{D}_n)_{n\geq 1}$.
Denote the conditional probability, conditioned on the uniform perfect matching in Algorithm~\ref{algo:3}~(S2), by $\tilde{\PR}(\cdot)$. 
Notice that, since Algorithm~\ref{algo:3}~(S1), and~(S2) are carried out independently, $\tilde{D}_i \sim \mathrm{Bin}(D_i,\sqrt{p_n})$ under $\tilde{\PR}$. 
Using standard concentration inequalities \cite[(2.9)]{JLR00}, it follows that 
\begin{equation}
 \tilde{\PR}(\tilde{D}_i< D_i\sqrt{p_n}(1-\sqrt{p_n}))\leq 2\e^{-D_ip_n^{3/2}/3},
\end{equation}and thus for $\mathcal{I} = \{k:D_k>n^{\varepsilon}\}$, the union bound yields
\begin{equation}\label{domination-tilde-orginal-degree}
  \PR(\exists i\in \mathcal{I}:D_i > a\tilde{D}_i) \to 0,
\end{equation}for some constant $a>0$. Let $\mathcal{E}_n$ denote the corresponding event in \eqref{domination-tilde-orginal-degree}. Thus, for any $\eta >0$, 
\begin{equation}
 \PR\bigg(n^{-4/3}\sum_{k>K,k\in \mathcal{I}}D_k^2>\eta\bigg) \leq \PR\bigg(n^{-4/3}\sum_{k>K}\tilde{D}_k^2>\frac{\eta}{a}\bigg)+ \PR(\mathcal{E}_n) \to 0,
\end{equation}if we first take first take limit as $n\to\infty$, and then $K\to\infty$, and use the $\ell^2_{\shortarrow}$ tightness of $(\tilde{\mathbf{D}}_n)_{n\geq 1}$. 
Further, $\sum_{k\notin \mathcal{I}}D_k^{2}\leq n^{1+2\varepsilon}=o(n^{4/3})$, if $\varepsilon<1/6$. This completes the proof of the $\ell^2_{\shortarrow}$ tightness of $(\mathbf{D}_n)_{n\geq 1}$ and consequently that of $(\mathbf{O}_n(\lambda))_{n\geq 1}$.
\end{proof}

\subsection{Proof of Theorem~\ref{thm_multiple_convergence}}
 \label{sec-mul-conv-thm-proof}
We will consider the case $k=2$ only, since the case for general $k$ can be proved inductively. Fix $-\infty<\lambda_0<\lambda_1<\infty$.
Suppose that the modified Algorithm~\ref{algo:modify-dyn-cons} starts at time $\lambda_{\star} = \lambda_0$. 
By Lemma~\ref{thm:open-comp} and Theorem~\ref{thm_percolation}, $(\mathbf{O}_n(\lambda_0),\kappa\mathbf{C}_n(\lambda_0))$ converges in distribution to $\kappa\sqrt{\nu}(\tilde{\bld{\gamma}}^{\lambda_0},\tilde{\bld{\gamma}}^{\lambda_0})$. Now, from Remark~\ref{rem:MC-exact-limit}, an application of Theorem~\ref{thm:AMC-2D} gives
\begin{equation}\label{joint-conv-2}
 (\mathbf{C}_n(\lambda_0),\bar{\mathbf{C}}_n(\lambda_1)) \dto \sqrt{\nu}(\tilde{\bld{\gamma}}^{\lambda_0},\tilde{\bld{\gamma}}^{\lambda_1}).
\end{equation}
The fact that the limiting distribution corresponding to $\bar{\mathbf{C}}_n(\lambda_1)$ is equal to $\sqrt{\nu}\tilde{\bld{\gamma}}^{\lambda_1}$ follows from the Feller property of multiplicative coalescent, \cite[Theorem 2]{AL98}, and Theorem~\ref{thm:AMC-2D}. For $\mathbf{x},\mathbf{y}\in \ell^2_{\shortarrow}$, denote $\mathbf{x}\preceq\mathbf{y}$ if $\mathbf{x}$ is the vector in decreasing order of elements $\{y_{ij}:i,j\geq 1\}$ such that $\sum_{j}y_{ij}\leq y_i$ for all $i\geq 1$. 
Thus if $\mathbf{y}$ is obtained by \emph{coalescing} elements of $\mathbf{x}$, then $\mathbf{x}\preceq\mathbf{y}$. 
Under the coupling in Remark~\ref{rem:modify-AMC}, it follows that $\mathbf{C}_{n}(\lambda)\preceq \bar{\mathbf{C}}_{n}(\lambda)$ almost surely, for each $\lambda\geq \lambda_0$. 
Using \cite[Corollary~18~(a)]{A97}, it follows that
\begin{equation}\label{bound-norm}
\|\bar{\mathbf{C}}_n(\lambda_1)-\mathbf{C}_n(\lambda_1)\|_{\sss 2}^2\leq \|\bar{\mathbf{C}}_n(\lambda_1)\|_{\sss 2}^2-\|\mathbf{C}_n(\lambda_1)\|_{\sss 2}^2,
\end{equation}where $\|\cdot\|_{\sss 2}$ denote the $\ell^2$-norm. The final ingredient is the following straightforward lemma:

\begin{lemma}\label{lem_ordering_convergence} Suppose $X_n$, $Y_n$ are non-negative random variables such that $X_n\leq Y_n$ a.s. and $X_n\xrightarrow{\mathcal{L}}X$, $Y_n\xrightarrow{\mathcal{L}}X$. Then, $$Y_n-X_n\xrightarrow{\mathbbm{P}}0.$$
\end{lemma}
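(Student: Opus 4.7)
My plan is to establish the joint convergence $(X_n, Y_n) \dto (X, X)$ on $\R^2$, after which the continuous mapping theorem applied to $(x,y) \mapsto y - x$ yields $Y_n - X_n \dto 0$; since convergence in distribution to a constant is equivalent to convergence in probability, the conclusion follows.

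To obtain the joint convergence, I would first observe that marginal convergence of $X_n$ and $Y_n$ implies tightness of $(X_n, Y_n)_{n \geq 1}$ on $\R^2$, so by Prohorov's theorem it suffices to show that every subsequential weak limit $(X', Y')$ satisfies $(X', Y') \disteq (X, X)$. The marginals are automatic, i.e., $X' \disteq X \disteq Y'$, and since each $(X_n, Y_n)$ places full mass on the closed set $\{(x,y) \in \R^2 : x \leq y\}$, the portmanteau theorem forces $X' \leq Y'$ almost surely.

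The key step — and the only nonroutine one — is to upgrade $X' \disteq Y'$ together with $X' \leq Y'$ a.s.\ to $X' = Y'$ a.s. Here I would compose with the bounded, continuous, strictly increasing function $g(x) := x/(1+x)$ on $[0,\infty)$; such a function is available precisely because $X_n, Y_n \geq 0$, which is where the non-negativity hypothesis is used. Equality in distribution gives $\expt{g(X')} = \expt{g(Y')}$, while monotonicity of $g$ combined with $X' \leq Y'$ a.s.\ gives $g(X') \leq g(Y')$ a.s. Thus $g(Y') - g(X')$ is a non-negative random variable with zero mean and therefore vanishes almost surely; strict monotonicity of $g$ then forces $X' = Y'$ a.s., completing the identification of the subsequential limit and hence the proof.
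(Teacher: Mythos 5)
Your proof is correct and takes essentially the same subsequence-tightness route as the paper: extract a weakly convergent subsequence, identify the marginals as $X$, show the limiting law is concentrated on the diagonal, upgrade to the full sequence by tightness, and finish with the continuous mapping theorem. You actually go further than the paper in one respect, explicitly justifying---via the bounded, strictly increasing $g(x)=x/(1+x)$---the key step that $Z_1\leq Z_2$ a.s.\ together with $Z_1\disteq Z_2$ forces $Z_1=Z_2$ a.s., which the paper asserts without argument.
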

\begin{proof}
 Note that  $((X_n,Y_n))_{n\geq 1}$ is tight in $\mathbbm{R}^2$. Thus, for any $(n'_i)_{i\geq 1}$ there exists a subsequence $(n_i)_{i\geq 1}\subset (n'_i)_{i\geq 1}$ such that $(X_{n_i},Y_{n_i})\xrightarrow{\sss \mathcal{L}}(Z_1,Z_2).$  Using the marginal distributional limits we get $Z_1\stackrel{\sss \mathcal{L}}{=} X$, $Z_2\stackrel{\sss \mathcal{L}}{=} X$.  Also the joint distribution of $(Z_1,Z_2)$ is concentrated on the line $y=x$ in the $xy$ plane.  Thus, $(X_{n_i},Y_{n_i})\xrightarrow{\sss \mathcal{L}} (X,X)$. This limiting distribution does not depend on the subsequence $(n_i)_{i\geq 1}$. Thus the tightness of $((X_n,Y_n))_{n\geq 1}$ implies $(X_{n},Y_{n})\xrightarrow{\sss \mathcal{L}} (X,X)$. The proof is now complete.
\end{proof}
\noindent Now, observe that $\|\mathbf{C}_n(\lambda_1)\|_{\sss 2}^2\leq \|\bar{\mathbf{C}}_n(\lambda_1)\|_{\sss 2}^2$ and $\|\mathbf{C}_n(\lambda_1)\|_{\sss 2}^2$, and  $\|\bar{\mathbf{C}}_n(\lambda_1)\|_{\sss 2}^2$ have the same distributional limit by Theorem~\ref{thm_surplus}, and \eqref{joint-conv-2}.  Thus, Lemma~\ref{lem_ordering_convergence} implies that $\|\bar{\mathbf{C}}_n(\lambda_1)\|_{\sss 2}^2-\|\mathbf{C}_n(\lambda_1)\|_{\sss 2}^2\xrightarrow{\sss\mathbbm{P}} 0$, and \eqref{joint-conv-2}, \eqref{bound-norm} yield 
\begin{equation}
 (\mathbf{C}_n(\lambda_0),\mathbf{C}_n(\lambda_1)) \dto \sqrt{\nu}(\tilde{\bld{\gamma}}^{\lambda_0},\tilde{\bld{\gamma}}^{\lambda_1}).
\end{equation}
Finally, the proof of Theorem~\ref{thm_multiple_convergence} is complete by applying Proposition~\ref{prop:coupling-whp}.\qed

\section*{Acknowledgement} This research has been supported by the Netherlands Organisation for Scientific
Research (NWO) through Gravitation Networks grant 024.002.003. In addition, RvdH has been supported by VICI grant 639.033.806 and JvL has been supported by the European Research Council (ERC).

\bibliographystyle{apa}
\bibliography{project1}

\end{document}